\providecommand{\tabularnewline}{\\}
\numberwithin{equation}{section}
\numberwithin{figure}{section}
\theoremstyle{remark}
\newtheorem*{rem*}{\protect\remarkname}
\theoremstyle{plain}
\newtheorem{thm}{\protect\theoremname}
\theoremstyle{remark}
\newtheorem{rem}[thm]{\protect\remarkname}
\providecommand{\remarkname}{Remark}
\providecommand{\theoremname}{Theorem}
\begin{document}
\title[Approximation of a Gompertz tumor growth model]{An explicit Fourier-Klibanov method for an age-dependent tumor growth
model of Gompertz type}
\author{Nguyen Thi Yen Ngoc}
\address{Department of Mechanics, Faculty of Mathematics and Computer Science,
University of Science, Vietnam National University - Ho Chi Minh City,
Vietnam}
\email{ntyngoc@hcmus.edu.vn, yenngoc0202@gmail.com}
\author{Vo Anh Khoa}
\address{Department of Mathematics, Florida A\&M University, Tallahassee, FL
32307, USA}
\email{anhkhoa.vo@famu.edu, vakhoa.hcmus@gmail.com}
\keywords{Tumor growth, Gompertz law, age-structured models, Fourier-Klibanov
series, finite difference method, stability estimate.}
\begin{abstract}
This paper proposes an explicit Fourier-Klibanov method as a new approximation
technique for an age-dependent population PDE of Gompertz type in
modeling the evolution of tumor density in a brain tissue. Through
suitable nonlinear and linear transformations, the Gompertz model
of interest is transformed into an auxiliary third-order nonlinear
PDE. Then, a coupled transport-like PDE system is obtained via an
application of the Fourier-Klibanov method, and, thereby, is approximated
by the explicit finite difference operators of characteristics. The
stability of the resulting difference scheme is analyzed under the
standard 2-norm topology. Finally, we present some computational results
to demonstrate the effectiveness of the proposed method.
\end{abstract}

\maketitle

\section{Introduction and problem statement}

Mathematical modeling is a widely used approach to gain insight into
the growth and invasion of cancer cell populations. By leveraging
mathematical and computational methods in scientific oncology, researchers
can explain various cancer concepts and develop more effective treatment
strategies. To investigate the evolutionary dynamics of cancer, several
deterministic mathematical models have thus been developed for specific
cancer types and stages; see e.g. the monograph \cite{Wodarz2014}.
In principle, models of population dynamics are often formulated using
a continuum approach based on partial differential equations (PDEs).
However, finding analytical solutions to these PDEs is challenging
due to the nonlinear structures that account for the complex mechanisms
of cancer. Consequently, researchers resort to various approximation
methods to obtain numerical solutions to these equations.

In this work, we propose an explicit Fourier-Klibanov method for obtaining
numerical solutions to a brain tumor growth model. Tumor growth in
population dynamics can be described by various laws (cf. e.g. \cite{Murray1993}),
including the Von Bertalanffy law that involves
\begin{align*}
f\left(u\right) & =\rho u\quad\text{(exponential law)},\\
f\left(u\right) & =\rho u\left(1-\frac{u}{\mathfrak{C}_{\max}}\right)\quad\text{(Pearl-Verhulst logistic law)},
\end{align*}
and the Gompertz law,
\[
f\left(u\right)=-\rho u\ln\left(\frac{u}{e^{\mathfrak{K}/\text{d}}}\right).
\]

We have introduced above various parameters. As to the Von Bertalanffy
law, $\rho>0$ denotes the net proliferation rate ($\text{month}^{-1}$),
while $\mathfrak{C}_{\max}>0$ represents the maximum number of tumor
cells that can occupy a cubic millimeter of brain tissue. For the
Gompertz law, the parameter $\mathfrak{K}>0$ describes an exponential
increase when $u$ is small, while the damping constant $\text{d}>0$
is to constrain the growth rate when $u$ is large.

Although several studies have focused on approximating tumor growth
models using the Von Bertalanffy law (cf. e.g. \cite{Oezugurlu2015,Jaroudi2019}),
there has been limited research specifically dedicated to exploring
the Gompertz dynamic. The Gompertzian model is based on the notion
that as the tumor size increases, the tumor microenvironment becomes
more hostile, and the availability of nutrients and oxygen decreases.
Consequently, the tumor's growth rate decreases, leading to a deceleration
in tumor growth. Despite the Gompertzian model's potential for improving
our understanding of tumor dynamics, there is still much to be explored
in this area. Further research is needed to fully appreciate the implications
of the Gompertzian model in tumor growth dynamics and its potential
applications in cancer treatment.

Tumor growth models usually account for variations of tumor in space
and time, but their evolution can be further characterized using other
parametric variables; see e.g. \cite{Sinko1967} and references cited
therein for the derivation of population models with distinctive parametric
arguments. In this study, we focus on the age-dependent process of
cell division, where a dividing mother cell gives rise to two daughter
cells. By incorporating an aging variable into the above-mentioned
Gompertz model, our computational approach is introduced to approximate
the following PDE:
\begin{equation}
\partial_{t}u+\partial_{a}u-D\left(t,a\right)\partial_{xx}u+\mu\left(a\right)u=-\rho u\ln\left(\frac{u}{e^{\mathfrak{K}/\text{d}}}\right)\quad\text{for }t,a>0,x\in\left(-\ell,\ell\right),\label{eq:PDE}
\end{equation}
where $u=u\left(t,a,x\right)\ge0$ ($\text{cells (in thousands)}/\text{cm}$)
denotes the distribution of the number of tumor cells at time $t$,
age $a$ and spatial location $x$. The diffusion of the tumor throughout
the brain over time and age is described by the function $D=D\left(t,a\right)$
($\text{cm}^{2}/\text{month}$) in equation (\ref{eq:PDE}). Meanwhile,
the mortality rate of the population is presented by the age-dependent
function $\mu\left(a\right)>0$ ($\text{month}^{-1}$). As we develop
our proposed method, we simplify the spatial complexity by considering
a one-dimensional model with a length scale of $\ell>0$ ($\text{cm}$).
This simplification allows us to focus on the essential features of
the tumor growth dynamics and develop a more tractable model for numerical
simulations.

To complete the age-dependent Gompertz model, we equip (\ref{eq:PDE})
with the no-flux boundary condition:
\begin{equation}
\partial_{x}u\left(t,a,-\ell\right)=\partial_{x}u\left(t,a,\ell\right)=0\quad\text{for }t,a>0,\label{bound1}
\end{equation}
and the initial data
\begin{align}
u\left(0,a,x\right) & =u_{0}\left(a,x\right)\quad\text{for }a>0,x\in\left(-\ell,\ell\right),\label{bound2}\\
u\left(t,0,x\right) & =\overline{u}_{0}\left(t,x\right)\quad\text{for }t>0,x\in\left(-\ell,\ell\right).\label{bound3}
\end{align}

Here, we assume that $u_{0}$ and $\overline{u}_{0}$ satisfy the
standard compatibility condition $u_{0}\left(0,\cdot\right)=\overline{u}_{0}\left(0,\cdot\right)$.
The zero Neumann boundary condition is commonly imposed to model situations
where the tumor is confined to a specific region, such as a tumor
in a specific tissue or organ. Furthermore, it is assumed that the
functions involved in our PDE system possess sufficient regularity
to enable numerical simulation needed for our present purposes. Investigation
of the regularity of these functions will be undertaken in forthcoming
research.

In many cases, the initial condition for a newborn, denoted by $\overline{u}_{0}$,
is accompanied by a nonlocal operator that takes into account the
reproductive process, which is weighted by the bounded intrinsic maternity;
cf. e.g. \cite{Kim1995,Ayati2006}. However, this is not the primary
focus of our work. Instead, we only examine the regular condition
in our Gompertz model. It is worth noting that the nonlocal operator
has been successfully linearized through numerical methods in \cite{Iannelli2012}.
For every recursive step, the linearization process seeks numerical
solutions with the regular newborn boundary condition.

Also, we would like to stress that previous publications have referred
to (\ref{eq:PDE}) as a type of ultra-parabolic equations, with a
range of applications beyond the oncological context of this work.
For instance, ultra-parabolic PDEs have been found to be crucial in
describing heat transfer through a continuous medium in which the
presence of the parametric variable $a$ is due to the propagating
direction of a shock wave; cf. e.g. \cite{Lorenzi1998,Kuznetsov2018}.
Furthermore, these PDEs have been employed in mathematical finance,
specifically for the computation of call option prices. The derivation
of the ultra-parabolic PDEs has been detailed in \cite{Marcozzi2009},
utilizing the ultradiffusion process, wherein the parametric parameter
$a$ is determined by the asset price's path history. It is then worth
mentioning that in terms of the ultra-parabolic PDEs, several numerical
schemes have been developed for their approximate solution. To name
a few, some attempts have been made to design numerical methods for
linear equations \cite{Akrivis1994,Ashyralyev2012}, as well as for
nonlinear equations with a globally Lipschitzian source term \cite{Khoa2015}.

Our paper is three-fold. In section \ref{sec:2}, we focus on developing
an explicit Fourier-Klibanov method for approximating the Gompertz
model of interest. The method relies on the derivation of a new coupled
nonlinear transport-like PDE system. This can be done by an application
of some nonlinear and linear transformations and the special truncated
Fourier-Klibanov series. Subsequently, the proposed method is established
by applying the explicit finite difference method along with the characteristics
of time and age directions. Section \ref{sec:3} is devoted to the
$2$-norm stability analysis of the numerical scheme that we have
introduced in section \ref{sec:2}. Finally, to demonstrate the effectiveness
of the proposed method, numerical examples are presented in section
\ref{sec:4}.

\section{Explicit Fourier-Klibanov method\label{sec:2}}

The Fourier-Klibanov method is a technique that utilizes a Fourier
series driven by a special orthonormal basis of $L^{2}$. This basis
was first constructed in \cite{Klibanov2017}, and the Fourier-Klibanov
method has since been applied to various physical models of inverse
problems. Examples of these models include imaging of land mines,
crosswell imaging, and electrical impedance tomography, as demonstrated
in recent studies such as \cite{Klibanov2018,Khoa2020,Klibanov2020,Le2022,Klibanov2023}
and other works cited therein. Thus, our paper is the first to attempt
the application of this special basis to approximate a specific class
of nonlinear age-dependent population models.

Prior to defining the special basis, several transformations to the
PDE (\ref{eq:PDE}) are employed in the following subsection.

\subsection{Derivation of an auxiliary third-order PDE}

Let $a_{\dagger}\in\left(0,\infty\right)$ be the maximum age of the
cell population in the model. We define the survival probability,
\[
\Pi\left(a\right)=e^{-\int_{0}^{a}\mu\left(\sigma\right)d\sigma}.
\]
Now, take into account the nonlinear transformation $v=\frac{\ln\left(u/e^{\mathfrak{K}/\text{d}}\right)}{\Pi\left(a\right)}$
or $\frac{u}{e^{\mathfrak{K}/\text{d}}}=e^{\Pi\left(a\right)v}$ for
$a\in\left(0,a_{\dagger}\right)$. We compute that
\begin{equation}
\partial_{t}u=e^{\mathfrak{K}/\text{d}}\Pi\left(a\right)e^{\Pi\left(a\right)v}\partial_{t}v,\quad\partial_{a}u=e^{\mathfrak{K}/\text{d}}\Pi\left(a\right)e^{\Pi\left(a\right)v}\left(\partial_{a}v-\mu\left(a\right)v\right),\quad\partial_{x}u=e^{\mathfrak{K}/\text{d}}\Pi\left(a\right)e^{\Pi\left(a\right)v}\partial_{x}v,\label{eq:2.2}
\end{equation}
\begin{equation}
\partial_{xx}u=e^{\mathfrak{K}/\text{d}}\Pi\left(a\right)\partial_{x}\left(e^{\Pi\left(a\right)v}\partial_{x}v\right)=e^{\mathfrak{K}/\text{d}}\Pi\left(a\right)e^{\Pi\left(a\right)v}\left(\partial_{xx}v+\Pi\left(a\right)\left(\partial_{x}v\right)^{2}\right),\label{eq:2.3}
\end{equation}
\begin{equation}
\rho u\ln\left(\frac{u}{e^{\mathfrak{K}/\text{d}}}\right)=\rho e^{\Pi\left(a\right)v}\ln\left(e^{\Pi\left(a\right)v}\right)=\rho e^{\Pi\left(a\right)v}\Pi\left(a\right)v.\label{eq:2.4}
\end{equation}
Combining (\ref{eq:2.2})\textendash (\ref{eq:2.4}), we arrive at
the following PDE for $v$:
\begin{align*}
0 & =\partial_{t}u+\partial_{a}u-D\left(t,a\right)\partial_{xx}u+\mu\left(a\right)u+\rho u\ln\left(\frac{u}{e^{\mathfrak{K}/\text{d}}}\right)\\
 & =e^{\mathfrak{K}/\text{d}}\Pi\left(a\right)e^{\Pi\left(a\right)v}\left[\partial_{t}v+\partial_{a}v-\mu\left(a\right)v-D\left(t,a\right)\partial_{xx}v\right.\\
 & \left.\hspace{8em}\hspace{4em}\quad-D\left(t,a\right)\Pi\left(a\right)\left(\partial_{x}v\right)^{2}+\mu\left(a\right)\Pi^{-1}\left(a\right)+\rho e^{-\mathfrak{K}/\text{d}}v\right]\\
 & =e^{\mathfrak{K}/\text{d}}\Pi\left(a\right)e^{\Pi\left(a\right)v}\left[\partial_{t}v+\partial_{a}v-D\left(t,a\right)\left(\partial_{xx}v+\Pi\left(a\right)\left(\partial_{x}v\right)^{2}\right)\right.\\
 & \left.\hspace{8em}\hspace{4em}\qquad-\left(\mu\left(a\right)-\rho e^{-\mathfrak{K}/\text{d}}\right)v+\mu\left(a\right)\Pi^{-1}\left(a\right)\right].
\end{align*}
Vanishing $e^{\mathfrak{K}/\text{d}}\Pi\left(s\right)e^{\Pi\left(s\right)v}$
on the right-hand side of the above equation and then, applying $\partial_{x}$
to the resulting equation, we obtain the following auxiliary PDE:
\begin{equation}
\partial_{tx}v+\partial_{ax}v-D\left(t,a\right)\partial_{xxx}v-2D\left(t,a\right)\Pi\left(a\right)\partial_{x}v\partial_{xx}v-\left(\mu\left(a\right)-\rho e^{-\mathfrak{K}/\text{d}}\right)\partial_{x}v=0.\label{auxi}
\end{equation}

\subsection{A coupled transport-like system via the Fourier-Klibanov basis}

Equation (\ref{auxi}) is a non-trivial third-order PDE, and we thus
propose to apply the Fourier-Klibanov basis $\left\{ \Psi_{n}\left(x\right)\right\} _{n=1}^{\infty}$
in $L^{2}\left(-\ell,\ell\right)$ to solve it.

To construct this basis, we start by considering $\varphi_{n}\left(x\right)=x^{n-1}e^{x}$
for $x\in\left[-\ell,\ell\right]$ and $n\in\mathbb{N}$. The set
$\left\{ \varphi_{n}\left(x\right)\right\} _{n\in\mathbb{N}^{*}}$
is linearly independent and complete in $L^{2}\left(-\ell,\ell\right)$.
We then apply the standard Gram-Schmidt orthonormalization procedure
to obtain the basis $\left\{ \Psi_{n}\left(x\right)\right\} _{n=1}^{\infty}$,
which takes the form $P_{n}\left(x\right)e^{x}$, where $P_{n}\left(x\right)$
is the polynomial of the degree $n$.

The Fourier-Klibanov basis possesses the following properties:
\begin{itemize}
\item $\Psi_{n}\in C^{\infty}\left[-\ell,\ell\right]$ and $\Psi_{n}'\left(x\right)=\Psi_{n}\left(x\right)+P_{n}'\left(x\right)e^{x}$
is not identically zero for any $n\in\mathbb{N}^{*}$;
\item Let $s_{mn}=\left\langle \Psi_{n}',\Psi_{m}\right\rangle $ where
$\left\langle \cdot,\cdot\right\rangle $ denotes the scalar product
in $L^{2}\left(-\ell,\ell\right)$. Then the square matrix $S_{N}=\left(s_{mn}\right)_{m,n=1}^{N}\in\mathbb{R}^{N\times N}$
is invertible for any $N$ since
\end{itemize}
\[
s_{mn}=\begin{cases}
1 & \text{if }n=m,\\
0 & \text{if }n<m.
\end{cases}
\]

Essentially, $S_{N}$ is an upper triangular matrix with $\det\left(S_{N}\right)=1$.
\begin{rem*}
The Fourier-Klibanov basis is similar to the orthogonal polynomials
formed by the so-called Laguerre functions. However, the Laguerre
polynomials are used for the $L^{2}$ basis in the semi-infinite interval
with a decaying weighted inner product. We also notice that the second
property of the Fourier-Klibanov basis does not hold for either classical
orthogonal polynomials or the classical basis of trigonometric functions.
The first column of $S_{N}$ obtained from either of the two conventional
bases would be zero.
\end{rem*}
Let $N\ge1$ now be the cut-off constant. We will discuss how to choose
$N$ later in the numerical section. Consider the truncated Fourier
series for $v$ in the following sense:
\begin{equation}
v\left(t,a,x\right)=\sum_{n=1}^{N}\left\langle v\left(t,a,\cdot\right),\Psi_{n}\left(\cdot\right)\right\rangle \Psi_{n}\left(x\right)=\sum_{n=1}^{N}v_{n}\left(t,a\right)\Psi_{n}\left(x\right).\label{eq:truncated}
\end{equation}
Plugging this truncated series into the auxiliary PDE (\ref{auxi}),
we have
\begin{align}
 & \sum_{n=1}^{N}\left[\left(\partial_{t}+\partial_{a}\right)-\left(\mu\left(a\right)-\rho e^{-\mathfrak{K}/\text{d}}\right)\right]v_{n}\left(t,a\right)\Psi_{n}'\left(x\right)\label{eq:2.5}\\
 & =D\left(t,a\right)\sum_{n=1}^{N}v_{n}\left(t,a\right)\Psi_{n}'''\left(x\right)+2D\left(t,a\right)\Pi\left(a\right)\sum_{n=1}^{N}\sum_{k=1}^{N}v_{n}\left(t,a\right)v_{k}\left(t,a\right)\Psi_{n}'\left(x\right)\Psi_{k}''\left(x\right).\nonumber 
\end{align}
For each $1\le m\le N$, we multiply both sides of (\ref{eq:2.5})
by $\Psi_{m}$ and then integrate both sides of the resulting equation
from $x=-\ell$ to $x=\ell$. Let $V=\left(v_{1},v_{1},...,v_{N}\right)^{\text{T}}\in\mathbb{R}^{N}$
be the $N$-dimensional vector-valued function that contains all of
the Fourier coefficients $v_{n}$. We obtain the following nonlinear
PDE system:
\begin{align}
\sum_{n=1}^{N}s_{mn} & \left[\left(\partial_{t}+\partial_{a}\right)-\left(\mu\left(a\right)-\rho e^{-\mathfrak{K}/\text{d}}\right)\right]v_{n}\left(t,a\right)\label{eq:2.6}\\
 & =D\left(t,a\right)\sum_{n=1}^{N}\kappa_{mn}v_{n}\left(t,a\right)+2D\left(t,a\right)\Pi\left(a\right)\sum_{n=1}^{N}\sum_{k=1}^{N}\varsigma_{mnk}v_{n}\left(t,a\right)v_{k}\left(t,a\right),\nonumber 
\end{align}
where $\kappa_{mn}=\left\langle \Psi_{n}''',\Psi_{m}\right\rangle $
and $\varsigma_{mnk}=\left\langle \Psi_{n}'\Psi_{k}'',\Psi_{m}\right\rangle $.
It is straightforward to see that system (\ref{eq:2.6}) is of the
transport-like form. By the boundary data (\ref{bound2}) and (\ref{bound3}),
we associate (\ref{eq:2.6}) with the following boundary conditions:
\begin{align}
v_{n}\left(0,a\right) & =\left\langle v\left(0,a,\cdot\right),\Psi_{n}\left(\cdot\right)\right\rangle =\Pi^{-1}\left(a\right)\left\langle \ln\left(u_{0}\left(a,\cdot\right)/e^{\mathfrak{K}/\text{d}}\right),\Psi_{n}\left(\cdot\right)\right\rangle ,\label{ini1}\\
v_{n}\left(t,0\right) & =\left\langle v\left(t,0,\cdot\right),\Psi_{n}\left(\cdot\right)\right\rangle =\left\langle \ln\left(\overline{u}_{0}\left(t,\cdot\right)/e^{\mathfrak{K}/\text{d}}\right),\Psi_{n}\left(\cdot\right)\right\rangle .\label{ini2}
\end{align}

\subsection{Finite difference operators}

To approximate the transport-like system (\ref{eq:2.6})\textendash (\ref{ini2}),
we propose to apply the so-called finite difference method of characteristics
in the time-age direction. To do so, we take into account the time
increment $\Delta t=T/M$ for $M\ge2$ being a fixed integer. Then,
we set the mesh-point in time by $t_{i}=i\Delta t$ for $0\le i\le M$.
Using the number $M$, we define $K=\left[a_{\dagger}/\Delta t+1\right]$
and set the mesh-point in age by 
\[
a_{j}=j\Delta t\quad\text{for }0\le j<K.
\]

Thus, we discretize the differential operator $\partial_{t}+\partial_{a}$
along the characteristic $t=a$, as follows:
\[
\left(\partial_{t}+\partial_{a}\right)v_{n}\left(t_{i},a_{j}\right)=\frac{v_{n,j}^{i}-v_{n,j}^{i-1}}{\Delta t}+\frac{v_{n,j}^{i-1}-v_{n,j-1}^{i-1}}{\Delta t}=\frac{v_{n,j}^{i}-v_{n,j-1}^{i-1}}{\Delta t}.
\]
Here and to this end, the subscript $j$ indicates the age level $a_{j}$
and the superscript $i$ implies the time level $t_{i}$. By the forward
Euler procedure, we then seek the discrete solution $v_{n,j}^{i}=v_{n}\left(t_{i},a_{j}\right)$
of the following systematic scheme:
\begin{align*}
\sum_{n=0}^{N-1}s_{mn}v_{n,j}^{i} & =\sum_{n=0}^{N-1}s_{mn}v_{n,j-1}^{i-1}+\Delta t\sum_{n=0}^{N-1}\left[s_{mn}\left(\mu_{j-1}-\rho e^{-\mathfrak{K}/\text{d}}\right)+D_{j-1}^{i-1}\kappa_{mn}\right]v_{n,j-1}^{i-1}\\
 & +2\Delta tD_{j-1}^{i-1}\Pi_{j-1}\sum_{n=0}^{N-1}\sum_{k=0}^{N-1}\varsigma_{mnk}v_{n,j-1}^{i-1}v_{k,j-1}^{i-1}
\end{align*}
for $i,j\ge1$. Now, for every step $\left(i,j\right)$, let $\mathbb{K}_{j}^{i}=\left(\mathbb{K}_{mn,j}^{i}\right)_{m,n=1}^{N}$
with $\mathbb{K}_{mn,j}^{i}=s_{mn}\left(\mu_{j}-\rho e^{-\mathfrak{K}/\text{d}}\right)+D_{j}^{i}\kappa_{mn}$
and let $\mathbb{G}_{m,j}^{i}=\left(\mathbb{G}_{mnk,j}^{i}\right)_{n,k=1}^{N}$
with $\mathbb{G}_{mnk,j}^{i}=2D_{j}^{i}\Pi_{j}\varsigma_{mnk}$. Henceforth,
our discretized coupled system has the following form:
\begin{equation}
S_{N}V_{j}^{i}=S_{N}V_{j-1}^{i-1}+\Delta t\mathbb{K}_{j-1}^{i-1}V_{j-1}^{i-1}+\Delta t\begin{bmatrix}\left(V_{j-1}^{i-1}\right)^{\text{T}}\mathbb{G}_{1,j-1}^{i-1}V_{j-1}^{i-1}\\
\left(V_{j-1}^{i-1}\right)^{\text{T}}\mathbb{G}_{2,j-1}^{i-1}V_{j-1}^{i-1}\\
\left(V_{j-1}^{i-1}\right)^{\text{T}}\mathbb{G}_{3,j-1}^{i-1}V_{j-1}^{i-1}\\
\vdots\\
\left(V_{j-1}^{i-1}\right)^{\text{T}}\mathbb{G}_{N,j-1}^{i-1}V_{j-1}^{i-1}
\end{bmatrix}.\label{discrete}
\end{equation}

By (\ref{ini1}) and (\ref{ini2}), this system is associated with
the starting points $V_{j}^{0}=\left(v_{n}\left(0,a_{j}\right)\right)_{n=1}^{N}$
and $V_{0}^{i}=\left(v_{n}\left(t_{i},0\right)\right)_{n=1}^{N}$.

\section{Stability analysis\label{sec:3}}

In this section, we want to analyze the stability of the proposed
explicit approach. Before doing so, we want to ensure that the discrete
solution $V_{j}^{i}$ is uniformly bounded for any bounded data $V_{j}^{0}$
and $V_{0}^{i}$. To this end, we make use of the following $2$-norm
of any vector $X=\left(X_{n}\right)_{n=1}^{N}\in\mathbb{R}^{N}$.
\begin{equation}
\left|X\right|_{2}=\left(\sum_{1\le n\le N}\left|X_{n}\right|^{2}\right)^{1/2}.\label{eq:3.1}
\end{equation}
Also, for any square matrix $X=\left(X_{mn}\right)_{m,n=1}^{N}\in\mathbb{R}^{N\times N}$,
we use the Frobenius norm,
\begin{equation}
\left|X\right|_{F}=\left(\sum_{m=1}^{N}\sum_{n=1}^{N}\left|X_{mn}\right|^{2}\right)^{1/2}.\label{eq:3.2}
\end{equation}

It follows from (\ref{discrete}) that for $i,j\ge1$,

\begin{equation}
S_{N}V_{j}^{i}=\begin{cases}
S_{N}V_{0}^{i-j}+\Delta t\sum_{\mathfrak{i}=0}^{j-1}\mathbb{F}\left(V_{\mathfrak{i}}^{\mathfrak{i}+i-j}\right), & \text{if }i\ge j,\\
S_{N}V_{j-i}^{0}+\Delta t\sum_{\mathfrak{j}=0}^{i-1}\mathbb{F}\left(V_{\mathfrak{j}+j-i}^{\mathfrak{j}}\right), & \text{if }i<j,
\end{cases}\label{compact}
\end{equation}
where the nonlinear term $\mathbb{F}:\mathbb{R}^{N}\to\mathbb{R}^{N}$
is defined as
\[
\mathbb{F}\left(V_{\mathfrak{j}}^{\mathfrak{i}}\right)=\mathbb{K}_{\mathfrak{j}}^{\mathfrak{i}}V_{\mathfrak{j}}^{\mathfrak{i}}+\begin{bmatrix}\left(V_{\mathfrak{j}}^{\mathfrak{i}}\right)^{\text{T}}\mathbb{G}_{1,\mathfrak{j}}^{\mathfrak{i}}V_{\mathfrak{j}}^{\mathfrak{i}}\\
\left(V_{\mathfrak{j}}^{\mathfrak{i}}\right)^{\text{T}}\mathbb{G}_{2,\mathfrak{j}}^{\mathfrak{i}}V_{\mathfrak{j}}^{\mathfrak{i}}\\
\left(V_{\mathfrak{j}}^{\mathfrak{i}}\right)^{\text{T}}\mathbb{G}_{3,\mathfrak{j}}^{\mathfrak{i}}V_{\mathfrak{j}}^{\mathfrak{i}}\\
\vdots\\
\left(V_{\mathfrak{j}}^{\mathfrak{i}}\right)^{\text{T}}\mathbb{G}_{N,\mathfrak{j}}^{\mathfrak{i}}V_{\mathfrak{j}}^{\mathfrak{i}}
\end{bmatrix}.
\]

Let $S_{N}^{-1}=\left(\tilde{s}_{mn}\right)_{m,n=1}^{N}\in\mathbb{R}^{N\times N}$
be the inverse of $S_{N}$. We can rewrite (\ref{compact}) as
\begin{equation}
V_{j}^{i}=\begin{cases}
V_{0}^{i-j}+\Delta tS_{N}^{-1}\sum_{\mathfrak{i}=0}^{j-1}\mathbb{F}\left(V_{\mathfrak{i}}^{\mathfrak{i}+i-j}\right), & \text{if }i\ge j,\\
V_{j-i}^{0}+\Delta tS_{N}^{-1}\sum_{\mathfrak{j}=0}^{i-1}\mathbb{F}\left(V_{\mathfrak{j}+j-i}^{\mathfrak{j}}\right), & \text{if }i<j.
\end{cases}\label{compact2}
\end{equation}

\begin{thm}
\label{thm:2}Assume that there exists a constant $C>0$ independent
of $i$ and $j$ such that
\begin{equation}
\max_{j\ge0}\left|V_{j}^{0}\right|_{2}\le C,\quad\text{and }\max_{i\ge0}\left|V_{i}^{0}\right|_{2}\le C.
\end{equation}
Moreover, suppose that for each $i,j\ge0$, we can find a constant
$P_{j}^{i}\ge0$ such that
\begin{equation}
\left|\mathbb{K}_{j}^{i}\right|_{F}+\sum_{m=1}^{N}\left|\mathbb{G}_{m,\mathfrak{j}}^{\mathfrak{i}}\right|_{F}\le P_{j}^{i}.\label{Pij}
\end{equation}
Then for any time step $\Delta t$ sufficiently small with
\begin{equation}
\Delta t\left|S_{N}^{-1}\right|_{F}\sum_{i,j\ge0}P_{j}^{i}\le\ln\left(\frac{C+1}{C+\frac{1}{2}}\right),\label{Deltat}
\end{equation}
the discrete solution $V_{j}^{i}$ in (\ref{compact}) is bounded
by
\begin{equation}
\max_{i,j\ge0}\left|V_{j}^{i}\right|_{2}\le2C.\label{target}
\end{equation}
\end{thm}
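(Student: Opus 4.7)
The plan is to read (\ref{compact2}) as a discrete Duhamel formula, bound the nonlinearity $\mathbb{F}$ pointwise by $P_j^i\,|V|_2(1+|V|_2)$, and then reduce the two-dimensional recurrence to a one-dimensional discrete Gronwall inequality along each characteristic line. The appearance of the logarithmic threshold in (\ref{Deltat}) is the signature of the Bernoulli ODE $W'=qW(1+W)$, whose separation-of-variables solution reaches exactly $W=2C$ when $\int q=\ln((C+1)/(C+1/2))$, so the right strategy is to mimic that integration in the discrete setting by a multiplicative identity rather than a naive iteration.

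First I would establish the pointwise estimate $|\mathbb{F}(V_j^i)|_2\le P_j^i\,|V_j^i|_2(1+|V_j^i|_2)$. The linear contribution is bounded by $|\mathbb{K}_j^iV|_2\le|\mathbb{K}_j^i|_F|V|_2$; for the quadratic contribution, each scalar entry $(V)^{\text{T}}\mathbb{G}_{m,j}^iV$ is at most $|\mathbb{G}_{m,j}^i|_F|V|_2^2$ by Cauchy-Schwarz, and assembling the $N$ entries via $\ell^2\le\ell^1$ on $\mathbb{R}^N$ contributes the factor $\sum_m|\mathbb{G}_{m,j}^i|_F$. Hypothesis (\ref{Pij}) then absorbs both contributions into $P_j^i$. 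Substituting into (\ref{compact2}) and using $|S_N^{-1}Y|_2\le|S_N^{-1}|_F|Y|_2$ gives, at any site $(j,i)$, a one-dimensional inequality
\[
w_n \le C + \sum_{k=0}^{n-1} q_k\,w_k(1+w_k),
\]
where $w_n$ denotes $|V|_2$ at the $n$-th site on the characteristic through $(j,i)$ (whose seed $V_0^{i-j}$ or $V_{j-i}^0$ has norm at most $C$ by hypothesis) and $q_k:=\Delta t\,|S_N^{-1}|_F\,P_{j_k}^{i_k}$ is the weight at the corresponding grid point. Since $\sum_k q_k\le\Delta t\,|S_N^{-1}|_F\sum_{i,j\ge0}P_j^i$, hypothesis (\ref{Deltat}) ensures $\sum_k q_k\le\ln((C+1)/(C+1/2))$.

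I would then dominate $w_n$ by the majorant $W_n$ with $W_0=C$ and $W_{n+1}=W_n+q_n W_n(1+W_n)$, which is legitimate because $x\mapsto x(1+x)$ is nondecreasing on $[0,\infty)$. The decisive algebraic step is the pair of identities $1+W_{n+1}=(1+W_n)(1+q_nW_n)$ and $W_{n+1}=W_n(1+q_n(1+W_n))$, from which
\[
\frac{W_{n+1}}{1+W_{n+1}} = \frac{W_n}{1+W_n}\cdot\frac{1+q_n(1+W_n)}{1+q_nW_n} \le \frac{W_n}{1+W_n}(1+q_n).
\]
Iterating and using $1+q\le e^q$ yields
\[
\frac{W_n}{1+W_n} \le \frac{C}{1+C}\exp\Bigl(\sum_k q_k\Bigr) \le \frac{C}{1+C}\cdot\frac{C+1}{C+1/2} = \frac{2C}{2C+1},
\]
and since $x\mapsto x/(1+x)$ is increasing this is equivalent to $W_n\le 2C$, which proves (\ref{target}). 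The main obstacle is the sharpness of this discrete Gronwall step: a naive induction substituting $|V|\le 2C$ directly into $P|V|(1+|V|)$ would only permit $\Delta t|S_N^{-1}|_F\sum P_j^i\le 1/(4C+2)$, which is strictly smaller than the logarithmic threshold in (\ref{Deltat}); recovering the stated threshold genuinely requires the passage through the $W/(1+W)$ transformation above, the discrete counterpart of separating variables in $W'=qW(1+W)$.
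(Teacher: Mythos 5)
Your proposal is correct, and it reaches the same bound by the same overall strategy as the paper: the pointwise estimate $|\mathbb{F}(V)|_{2}\le P\,|V|_{2}(1+|V|_{2})$, the reduction via (\ref{compact2}) to a one-dimensional inequality along each characteristic seeded by data of norm at most $C$, and then a nonlinear (Bihari-type) Gronwall step adapted to the quadratic nonlinearity. The difference is in how that last step is executed. The paper invokes the discrete Gronwall--Bellman--Ou-Iang inequality of Cheung as a black box, producing the bound $\Phi^{-1}\bigl[\Phi(C)+\Delta t|S_{N}^{-1}|_{F}\sum P\bigr]$ with $\Phi(f)=\int_{1}^{f}\frac{dr}{r+r^{2}}=\ln\bigl(\frac{2f}{f+1}\bigr)$; you instead construct the explicit majorant sequence $W_{n+1}=W_{n}+q_{n}W_{n}(1+W_{n})$ and telescope the exact identities $W_{n+1}=W_{n}(1+q_{n}(1+W_{n}))$ and $1+W_{n+1}=(1+W_{n})(1+q_{n}W_{n})$ to get $\frac{W_{n}}{1+W_{n}}\le\frac{C}{1+C}e^{\sum q_{k}}$. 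Since $\ln\frac{W}{1+W}=\Phi(W)-\ln 2$, your product identity is precisely a self-contained, elementary proof of the cited lemma in the special case needed here; it buys a paper-independent argument and, incidentally, a cleaner bookkeeping of indices (your sum $\sum_{k}q_{k}$ stays on the characteristic and is only then dominated by the global sum in (\ref{Deltat}), whereas the paper's display (3.8) jumps to a double sum over $\mathfrak{i},\mathfrak{j}$). Your closing remark that a naive induction with $|V|\le 2C$ substituted into the quadratic term would only tolerate $\Delta t|S_{N}^{-1}|_{F}\sum P\le\frac{1}{4C+2}$, strictly below the logarithmic threshold, is also correct and explains why the $\frac{W}{1+W}$ (equivalently $\Phi$) transformation is genuinely needed.
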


\begin{proof}
It is straightforward to see that
\begin{align*}
\left|\mathbb{F}\left(V_{\mathfrak{j}}^{\mathfrak{i}}\right)\right|_{2} & \le\left|\mathbb{K}_{\mathfrak{j}}^{\mathfrak{i}}V_{\mathfrak{j}}^{\mathfrak{i}}\right|_{2}+\left|\left(V_{\mathfrak{j}}^{\mathfrak{i}}\right)^{\text{T}}\mathbb{G}_{\mathfrak{j}}^{\mathfrak{i}}V_{\mathfrak{j}}^{\mathfrak{i}}\right|_{2}\\
 & \le\left|\mathbb{K}_{\mathfrak{j}}^{\mathfrak{i}}\right|_{F}\left|V_{\mathfrak{j}}^{\mathfrak{i}}\right|_{2}+\sum_{m=1}^{N}\left|\mathbb{G}_{m,\mathfrak{j}}^{\mathfrak{i}}\right|_{F}\left|V_{\mathfrak{j}}^{\mathfrak{i}}\right|_{2}^{2}.
\end{align*}
Therefore, by (\ref{Pij}) and (\ref{compact2}), we estimate that
for $i\ge j$,
\begin{align}
\left|V_{j}^{i}\right|_{2} & \le\left|V_{0}^{i-j}\right|_{2}+\Delta t\left|S_{N}^{-1}\right|_{F}\sum_{\mathfrak{i}=0}^{j-1}\left|\mathbb{F}\left(V_{\mathfrak{i}}^{\mathfrak{i}+i-j}\right)\right|_{2}\nonumber \\
 & \le C+\Delta t\left|S_{N}^{-1}\right|_{F}\sum_{\mathfrak{i}=0}^{j-1}P_{\mathfrak{i}}^{\mathfrak{i}+i-j}\left(\left|V_{\mathfrak{i}}^{\mathfrak{i}+i-j}\right|_{2}+\left|V_{\mathfrak{i}}^{\mathfrak{i}+i-j}\right|_{2}^{2}\right).\label{3.7}
\end{align}
Now, with the aid of the discrete Gronwall-Bellman-Ou-Iang inequality
(cf. \cite[Theorem 2.1]{Cheung2006}) applied to (\ref{3.7}), we
deduce that
\begin{equation}
\left|V_{j}^{i}\right|_{2}\le\Phi^{-1}\left[\Phi\left(C\right)+\Delta t\left|S_{N}^{-1}\right|_{F}\sum_{\mathfrak{i}=0}^{i-1}\sum_{\mathfrak{j}=0}^{j-1}P_{\mathfrak{j}}^{\mathfrak{i}}\right],\label{3.8}
\end{equation}
where $\Phi^{-1}$ is the inverse of $\Phi$, and $\Phi$ is given
by
\[
\Phi\left(f\right)=\int_{1}^{f}\frac{dr}{r+r^{2}}=\ln\left(\frac{2f}{f+1}\right)\quad\text{for }f>0.
\]
Herewith, the denominator is obtained from the structure of the second
component on the right-hand side of the bound (\ref{3.7}). With this
structure in mind, we can find $\Phi^{-1}$ as follows:
\[
\Phi^{-1}\left(f\right)=\frac{e^{f}}{2-e^{f}}=\frac{1}{2e^{-f}-1}\quad\text{for }f\in\left(0,\ln2\right).
\]
Next, by our choice in (\ref{Deltat}), we mean that
\[
\left(C+1\right)\exp\left(-\Delta t\left|S_{N}^{-1}\right|_{F}\sum_{\mathfrak{i}=0}^{i-1}\sum_{\mathfrak{j}=0}^{j-1}P_{\mathfrak{j}}^{\mathfrak{i}}\right)\ge C+\frac{1}{2}>C,
\]
Henceforth, for $i\ge j$, it follows from (\ref{3.8}) that
\begin{align*}
\left|V_{j}^{i}\right|_{2} & \le\left[2\exp\left(-\Phi\left(C\right)-\Delta t\left|S_{N}^{-1}\right|_{F}\sum_{\mathfrak{i}=0}^{i-1}\sum_{\mathfrak{j}=0}^{j-1}P_{\mathfrak{j}}^{\mathfrak{i}}\right)-1\right]^{-1}\\
 & =\left[2\exp\left(-\ln\left(\frac{2C}{C+1}\right)-\Delta t\left|S_{N}^{-1}\right|_{F}\sum_{\mathfrak{i}=0}^{i-1}\sum_{\mathfrak{j}=0}^{j-1}P_{\mathfrak{j}}^{\mathfrak{i}}\right)-1\right]^{-1}\\
 & =\left[\frac{C+1}{C}\exp\left(-\Delta t\left|S_{N}^{-1}\right|_{F}\sum_{\mathfrak{i}=0}^{i-1}\sum_{\mathfrak{j}=0}^{j-1}P_{\mathfrak{j}}^{\mathfrak{i}}\right)-1\right]^{-1}.
\end{align*}
which allows us to obtain the target estimate (\ref{target}) for
$i\ge j$. Using the same argument, we also have (\ref{target}) for
$i<j$ by means of the following estimate:
\[
\left|V_{j}^{i}\right|_{2}\le C+\Delta t\left|S_{N}^{-1}\right|_{F}\sum_{\mathfrak{j}=0}^{i-1}P_{\mathfrak{j}+j-i}^{\mathfrak{j}}\left(\left|V_{\mathfrak{j}+j-i}^{\mathfrak{j}}\right|_{2}+\left|V_{\mathfrak{j}+j-i}^{\mathfrak{j}}\right|_{2}^{2}\right)\quad\text{for }i<j.
\]

Hence, we complete the proof of the theorem.
\end{proof}
Theorem \ref{thm:2} implies that for any $i,j\ge1$, the discrete
solution $V_{j}^{i}$ of the numerical scheme and its initial data
stay in the same ball $\mathcal{B}\left(0,2C\right)$ under the topology
of the $2$-norm. This essence allows us to analyze the stability
of the proposed scheme (\ref{compact}). Let $W_{j}^{i}=V_{j}^{i}-\tilde{V}_{j}^{i}$,
where $V_{j}^{i}$ and $\tilde{V}_{j}^{i}$ satisfy \ref{compact2}
corresponding to the initial data $\left(V_{j}^{0},V_{0}^{i}\right)$
and $\left(\tilde{V}_{j}^{0},\tilde{V}_{0}^{i}\right)$, respectively.
Our stability analysis is formulated in the following theorem.
\begin{thm}
Under the assumptions of Theorem \ref{thm:2}, we can show that for
all $i,j\ge0$, the following estimate holds true:
\[
\left|W_{j}^{i}\right|_{2}\le\left(\left|W_{0}^{i-j}\right|_{2}+\left|W_{j-i}^{0}\right|_{2}\right)\left[1+\left(1+4C\right)\ln\left(\frac{C+1}{C+\frac{1}{2}}\right)\right].
\]
\end{thm}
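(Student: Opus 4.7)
The plan is to subtract the two instances of (\ref{compact2}) corresponding to $V$ and $\tilde V$, and then to reduce the resulting identity to a linear discrete Gronwall inequality for $|W_j^i|_2$. The crucial input is Theorem \ref{thm:2}, which guarantees $|V_j^i|_2, |\tilde V_j^i|_2 \le 2C$ for all $i,j\ge 0$, so that both trajectories remain in a ball on which the quadratic nonlinearity $\mathbb F$ is Lipschitz. Subtraction gives, for $i \ge j$,
\[
W_j^i = W_0^{i-j} + \Delta t\,S_N^{-1}\sum_{\mathfrak i=0}^{j-1}\bigl[\mathbb F(V_{\mathfrak i}^{\mathfrak i+i-j}) - \mathbb F(\tilde V_{\mathfrak i}^{\mathfrak i+i-j})\bigr],
\]
with an analogous expression for $i<j$ in which the initial-data contribution is $W_{j-i}^0$. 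The two cases will be combined at the end by estimating the right-hand side uniformly by the sum $|W_0^{i-j}|_2 + |W_{j-i}^0|_2$.

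Next, I would derive a Lipschitz bound for $\mathbb F$. The linear part contributes $\mathbb K_{\mathfrak j}^{\mathfrak i}W_{\mathfrak j}^{\mathfrak i}$, which is controlled in the $2$-norm by $|\mathbb K_{\mathfrak j}^{\mathfrak i}|_F|W_{\mathfrak j}^{\mathfrak i}|_2$. For each of the quadratic components, the polarization identity $V^{\mathrm T}\mathbb G V - \tilde V^{\mathrm T}\mathbb G \tilde V = V^{\mathrm T}\mathbb G W + W^{\mathrm T}\mathbb G \tilde V$ gives an absolute value bounded by $(|V|_2+|\tilde V|_2)|\mathbb G|_F|W|_2 \le 4C|\mathbb G|_F|W|_2$ thanks to Theorem \ref{thm:2}. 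Summing over $m$, invoking hypothesis (\ref{Pij}), and using the elementary inequality $a + 4Cb \le (1+4C)(a+b)$ for $a,b\ge 0$, I obtain
\[
|\mathbb F(V_{\mathfrak j}^{\mathfrak i}) - \mathbb F(\tilde V_{\mathfrak j}^{\mathfrak i})|_2 \le (1+4C)\,P_{\mathfrak j}^{\mathfrak i}\,|W_{\mathfrak j}^{\mathfrak i}|_2.
\]

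Substituting this back into the difference identity produces the linear recursive estimate
\[
|W_j^i|_2 \le |W_0^{i-j}|_2 + (1+4C)\,\Delta t\,|S_N^{-1}|_F\sum_{\mathfrak i=0}^{j-1} P_{\mathfrak i}^{\mathfrak i+i-j}\,|W_{\mathfrak i}^{\mathfrak i+i-j}|_2,
\]
to which a discrete Gronwall-type argument applies. By hypothesis (\ref{Deltat}), the aggregated weight $(1+4C)\Delta t|S_N^{-1}|_F \sum_{i,j} P_j^i$ is bounded by $(1+4C)\ln((C+1)/(C+1/2))$, so that the target prefactor $1+(1+4C)\ln((C+1)/(C+1/2))$ emerges. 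The case $i<j$ is treated symmetrically with $|W_{j-i}^0|_2$ in place of $|W_0^{i-j}|_2$, and writing both cases uniformly via the sum of the two initial-data norms yields the statement. The delicate point I anticipate is matching the precise linear prefactor in the target rather than the exponential factor that a naive multiplicative Gronwall would produce; closing this gap will require either a sharper non-multiplicative discrete Gronwall lemma or a bootstrap against the crude a priori bound $|W_j^i|_2 \le |V_j^i|_2 + |\tilde V_j^i|_2 \le 4C$ supplied again by Theorem \ref{thm:2}.
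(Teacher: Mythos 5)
Your proposal follows essentially the same route as the paper: subtract the two copies of (\ref{compact2}), use the polarization identity together with the a priori bound $2C$ from Theorem \ref{thm:2} to obtain the Lipschitz estimate $\left|\mathbb{F}(V_{\mathfrak{j}}^{\mathfrak{i}})-\mathbb{F}(\tilde{V}_{\mathfrak{j}}^{\mathfrak{i}})\right|_{2}\le(1+4C)P_{\mathfrak{j}}^{\mathfrak{i}}\left|W_{\mathfrak{j}}^{\mathfrak{i}}\right|_{2}$, and close the resulting linear recursion with a discrete Gronwall argument under the smallness condition (\ref{Deltat}), treating $i<j$ symmetrically. The single step you flag as delicate---getting the additive prefactor $1+(1+4C)\ln\left(\frac{C+1}{C+\frac{1}{2}}\right)$ rather than the exponential factor a naive multiplicative Gronwall would give---is precisely where the paper invokes the Salem--Raslan inequality (Theorem 6.1.3 of \cite{Qin2016}), which supplies the linear factor directly; apart from citing that lemma, your argument coincides with the paper's.
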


\begin{proof}
From \ref{compact2},we can compute that for $i\ge j$,
\begin{equation}
W_{j}^{i}=W_{0}^{i-j}+\Delta tS_{N}^{-1}\sum_{\mathfrak{i}=0}^{j-1}\left[\mathbb{F}\left(V_{\mathfrak{i}}^{\mathfrak{i}+i-j}\right)-\mathbb{F}\left(\tilde{V}_{\mathfrak{i}}^{\mathfrak{i}+i-j}\right)\right].\label{eq:WW}
\end{equation}
In view of the fact that
\begin{align*}
V^{T}G_{m}V-\tilde{V}^{T}G_{m}\tilde{V} & =V^{T}\left(G_{m}V-G_{m}\tilde{V}\right)+\left(V^{T}-\tilde{V}^{T}\right)G_{m}\tilde{V}\\
 & =V^{T}G_{m}\left(V-\tilde{V}\right)+\left(V^{T}-\tilde{V}^{T}\right)G_{m}\tilde{V},
\end{align*}
we, in conjunction with (\ref{Pij}) and (\ref{target}), have
\begin{align*}
\left|\mathbb{F}\left(V_{\mathfrak{j}}^{\mathfrak{i}}\right)-\mathbb{F}\left(\tilde{V}_{\mathfrak{j}}^{\mathfrak{i}}\right)\right|_{2} & \le\left|\mathbb{K}_{\mathfrak{j}}^{\mathfrak{i}}\right|_{F}\left|W_{\mathfrak{j}}^{\mathfrak{i}}\right|_{2}+\sum_{m=1}^{N}\left|\mathbb{G}_{m,\mathfrak{j}}^{\mathfrak{i}}\right|_{F}\left(\left|V_{\mathfrak{j}}^{\mathfrak{i}}\right|_{2}+\left|\tilde{V}_{\mathfrak{j}}^{\mathfrak{i}}\right|_{2}\right)\left|W_{\mathfrak{j}}^{\mathfrak{i}}\right|_{2}\\
 & \le P_{j}^{i}\left(1+4C\right)\left|W_{\mathfrak{j}}^{\mathfrak{i}}\right|_{2}.
\end{align*}
Therefore, applying to the $2$-norm to (\ref{eq:WW}), we estimate
that
\begin{align*}
\left|W_{j}^{i}\right|_{2} & \le\left|W_{0}^{i-j}\right|_{2}+\Delta t\left|S_{N}^{-1}\right|_{F}\sum_{\mathfrak{i}=0}^{j-1}\left|\mathbb{F}\left(V_{\mathfrak{i}}^{\mathfrak{i}+i-j}\right)-\mathbb{F}\left(\tilde{V}_{\mathfrak{i}}^{\mathfrak{i}+i-j}\right)\right|_{2}\\
 & \le\left|W_{0}^{i-j}\right|_{2}+\Delta t\left|S_{N}^{-1}\right|_{F}\sum_{\mathfrak{i}=0}^{j-1}P_{j}^{i}\left(1+4C\right)\left|W_{\mathfrak{j}}^{\mathfrak{i}}\right|_{2}.
\end{align*}
Using the Salem-Raslan inequality (cf. \cite[Theorem 6.1.3]{Qin2016}),
we thus obtain that for $i\ge j$,
\[
\left|W_{j}^{i}\right|_{2}\le\left|W_{0}^{i-j}\right|_{2}\left[1+\Delta t\left|S_{N}^{-1}\right|_{F}\left(1+4C\right)\sum_{\mathfrak{i}=0}^{j-1}P_{j}^{i}\right]\le\left|W_{0}^{i-j}\right|_{2}\left[1+\left(1+4C\right)\ln\left(\frac{C+1}{C+\frac{1}{2}}\right)\right].
\]
For $i<j$, we can follow the same procedure to get
\[
\left|W_{j}^{i}\right|_{2}\le\left|W_{j-i}^{0}\right|_{2}\left[1+\left(1+4C\right)\ln\left(\frac{C+1}{C+\frac{1}{2}}\right)\right].
\]
Hence, we complete the proof of the theorem.
\end{proof}

\section{Numerical results\label{sec:4}}

In this section, we want to verify the numerical performance of the
proposed explicit Fourier-Klibanov method. By the inception of the
method, comparing it with the other numerical methods is not within
the scope of this paper. In fact, from our best knowledge, there appears
to be no numerical schemes studied for the age-dependent population
diffusion model of Gompertz type.

In our population model of interest, the mortality function is often
chosen as $\mu\left(a\right)=\left(a_{\dagger}-a\right)^{-1}$, which
is unbounded at $a=a_{\dagger}$. We will use this function in all
numerical experiments. Furthermore, for this particular choice, the
survival probability can be computed explicitly,
\[
\Pi\left(a\right)=e^{-\int_{0}^{a}\mu\left(\sigma\right)d\sigma}=\frac{a_{\dagger}-a}{a_{\dagger}},\quad\text{and }\Pi\left(a_{\dagger}\right)=0.
\]
Besides, we fix $\ell=1$ and $\Delta x=0.05$ , indicating that we
are looking at the dynamics of tumor cells within a length scale of
2 (cm) and a step-size of $0.5$ (mm). We also fix $T=10$ (months)
and examine the model with the maximum age of $a_{\dagger}=12$ (months).
Besides, $\mathfrak{K}=\text{d}=1$ are dimensionless and fixed.
\begin{rem}
\label{rem:Total}The population dynamics of cancer typically involve
determining the total population of tumor cells within specific age
and spatial ranges. This information is crucial in understanding the
local/global burden of cancer in the body, including the size and
extent of the tumor. In our numerical experiments, we consider
\[
p\left(t\right)=\int_{-\ell}^{\ell}\int_{0}^{a_{\dagger}}u\left(t,a,x\right)dadx
\]
as the (global) total population in time. In addition to ensuring
numerical stability of the density $u\left(t,a,x\right)$, we also
place importance on the stability of the total population, as it allows
us to obtain a macroscopic perspective of the entire simulation. To
approximate the time-dependent two-dimensional function $p\left(t\right)$
based on discrete approximations of $u\left(t,a,x\right)$ (via the
proposed explicit Fourier-Klibanov scheme), we use the standard trapezoidal
rule. It is worth noting that, as mentioned above, we have fixed the
following parameters: $\ell=1,a_{\dagger}=12$ and $\Delta x=0.05$,
while $\Delta a$ varies depending on the chosen value of $M$.
\end{rem}

\subsection{Choice of the cut-off constant}

We discuss how we determine the cut-off constant $N$ for the truncated
Fourier-Klibanov series. For each example, we explicitly select the
initial data $u_{0}\left(a,x\right)$, which we use to calculate $v_{0}\left(a,x\right)$
through the nonlinear transformation $v=\frac{\ln\left(u/e^{\mathfrak{K}/\text{d}}\right)}{\Pi\left(a\right)}$.
Denote the resulting transformed initial data as $v_{0}^{\text{true}}\left(a,x\right)$
and its corresponding approximation as $v_{0,N}^{\text{true}}\left(a,x\right)$.
With the explicit form of $v_{0}^{\text{true}}\left(a,x\right)$ and
the basis $\left\{ \Psi_{n}\left(x\right)\right\} _{n=1}^{\infty}$,
we can plug them into the truncated series (\ref{eq:truncated}) to
compute $v_{0,N}^{\text{true}}\left(a,x\right)$. This allows us to
compute the following relative max error:

\[
E_{\max}\left(v_{0}^{\text{true}},v_{0,N}^{\text{true}}\right)=\frac{\max_{j,l\ge0}\left|v_{0}^{\text{true}}\left(a_{j},x_{l}\right)-v_{0,N}^{\text{true}}\left(a_{j},x_{l}\right)\right|}{\max_{j,l\ge0}\left|v_{0}^{\text{true}}\left(a_{j},x_{l}\right)\right|}\times100\%.
\]

For convenience, we set the age step to 40, ensuring consistency with
the number of nodes in the space variable. It should be noted that
$\Delta x=0.05$, $\ell=1$ and $a_{\dagger}=12$ remain fixed in
seeking $N$. 

Cf. Table \ref{tab:N}, we can see that the relative max error, $E_{\max}\left(v_{0}^{\text{true}},v_{0,N}^{\text{true}}\right)$,
decreases rapidly as $N$ increases. In Example 1, increasing $N$
from 2 to 4 results in a reduction of $E_{\max}$ by a factor of approximately
30, while it is 10 in Examples 2 and 3. To better address how well
the truncated Fourier-Klibanov works, we present in Figure \ref{fig:1}
graphical representations of $v_{0}^{\text{true}}$ and $v_{0,N}^{\text{true}}$
for $N=2$ and $N=6$ for all examples.

Observe the first row of Figure \ref{fig:1}, which represents the
approximation by the truncated series for Example 1. When $N=2$,
the approximation is not good in terms of two criteria: the maximum
absolute value and the shape of the graph. However, when $N=6$, the
approximation fulfills both criteria very well.

The second row of Figure \ref{fig:1} shows the same performance for
Example 2. Since the Fourier-Klibanov series $v_{0,N}^{\text{true}}$
approximates $v_{0}^{\text{true}}$ so well in this example, we deliberately
present the log scale for the value of such $v_{0}^{\text{true}}$
and $v_{0,N}^{\text{true}}$ to show the convergence. Without the
log scale, it is difficult to see the significant improvement of the
series in terms of the two criteria when increasing $N$ from 2 to
6.

The last row of Figure \ref{fig:1} also demonstrates the same performance.
When $N=2$, $v_{0}^{\text{true}}$ and $v_{0,N}^{\text{true}}$ have
the same shape, yielding a relatively good accuracy of the series.
When $N$ is increased to 6, the value of $v_{0,N}^{\text{true}}$
is very close to $v_{0}^{\text{true}}$.

It is noteworthy that the maximum absolute value of $v_{0}^{\text{true}}$
in all examples is very large. However, despite this, the truncated
Fourier-Klibanov series demonstrates high accuracy in the relative
maximum error. Overall, the graphical representation shows that the
approximation for $N=6$ is in complete agreement with the true value,
confirming the accuracy we have analyzed. Based on these numerical
observations, we choose $N=6$ for all subsequent experiments.
\begin{center}
\begin{table}
\begin{centering}
\begin{tabular}{|c|c|c|c|c|c|c|c|c|}
\hline 
Example & $N$ & $E_{\max}$ & Example & $N$ & $E_{\max}$ & Example & $N$ & $E_{\max}$\tabularnewline
\hline 
\multirow{3}{*}{1} & 2 & 29.35\% & \multirow{3}{*}{2} & 2 & 61.68\% & \multirow{3}{*}{3} & 2 & 61.33\%\tabularnewline
\cline{2-3} \cline{3-3} \cline{5-6} \cline{6-6} \cline{8-9} \cline{9-9} 
 & 4 & 0.972\% &  & 4 & 6.746\% &  & 4 & 5.951\%\tabularnewline
\cline{2-3} \cline{3-3} \cline{5-6} \cline{6-6} \cline{8-9} \cline{9-9} 
 & 6 & 0.012\% &  & 6 & 0.160\% &  & 6 & 0.134\%\tabularnewline
\hline 
\end{tabular}
\par\end{centering}
\caption{The relative max error $E_{\max}\left(v_{0}^{\text{true}},v_{0,N}^{\text{true}}\right)$
decreases when $N$ becomes large. This is observed while keeping
the mesh grid of space and age fixed with $\Delta x=\Delta t=0.05$.
By these numerical observations, taking $N=6$ is acceptable for all
numerical experiments in the sense that the relative max error is
sufficiently small.\label{tab:N}}

\end{table}
\par\end{center}

\begin{center}
\begin{figure}
\begin{centering}
\subfloat[$v_{0}^{\text{true}}$]{\begin{centering}
\includegraphics[scale=0.115]{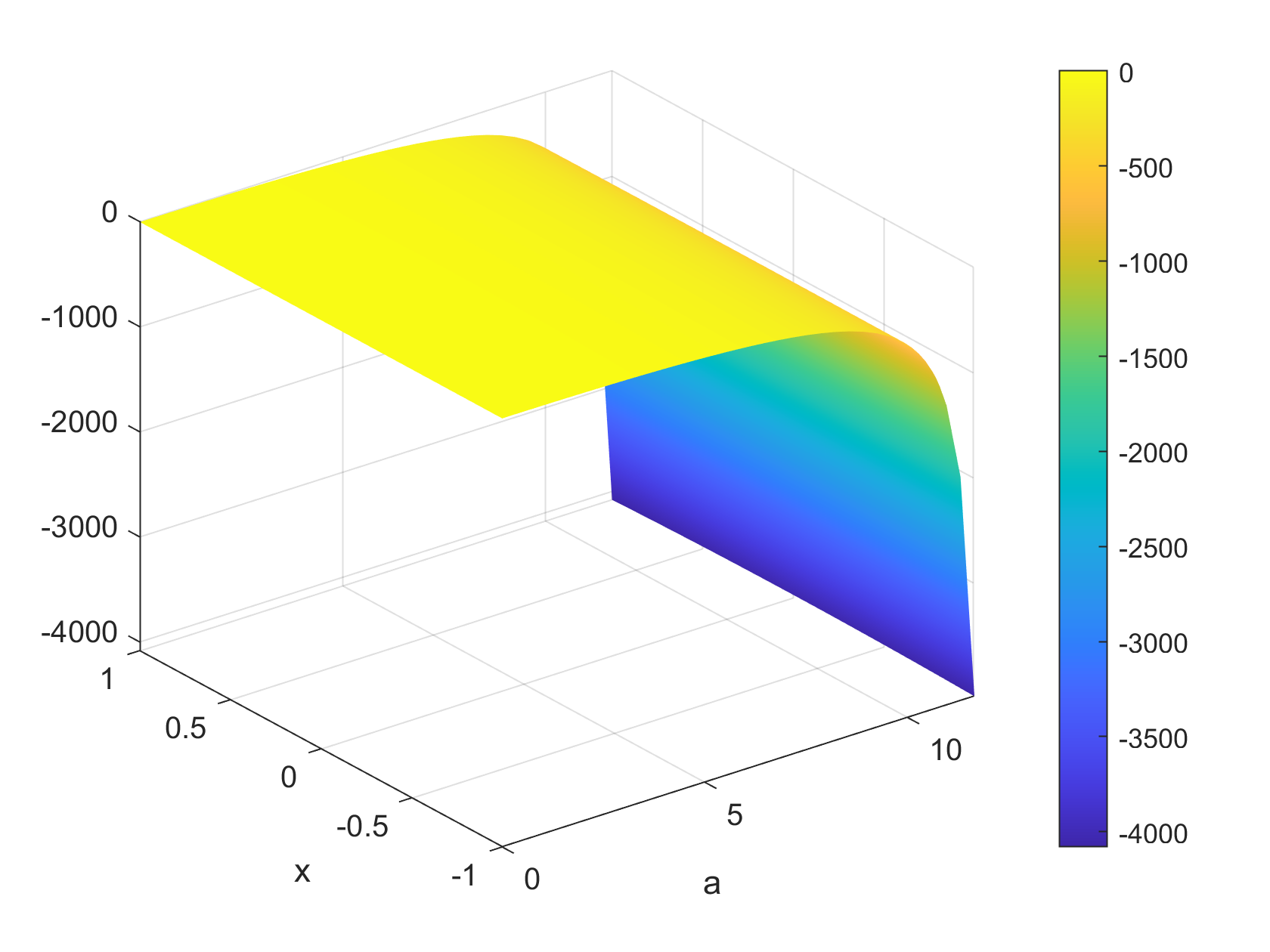}
\par\end{centering}
}\subfloat[$v_{0,N}^{\text{true}}$ ($N=2$)]{\begin{centering}
\includegraphics[scale=0.115]{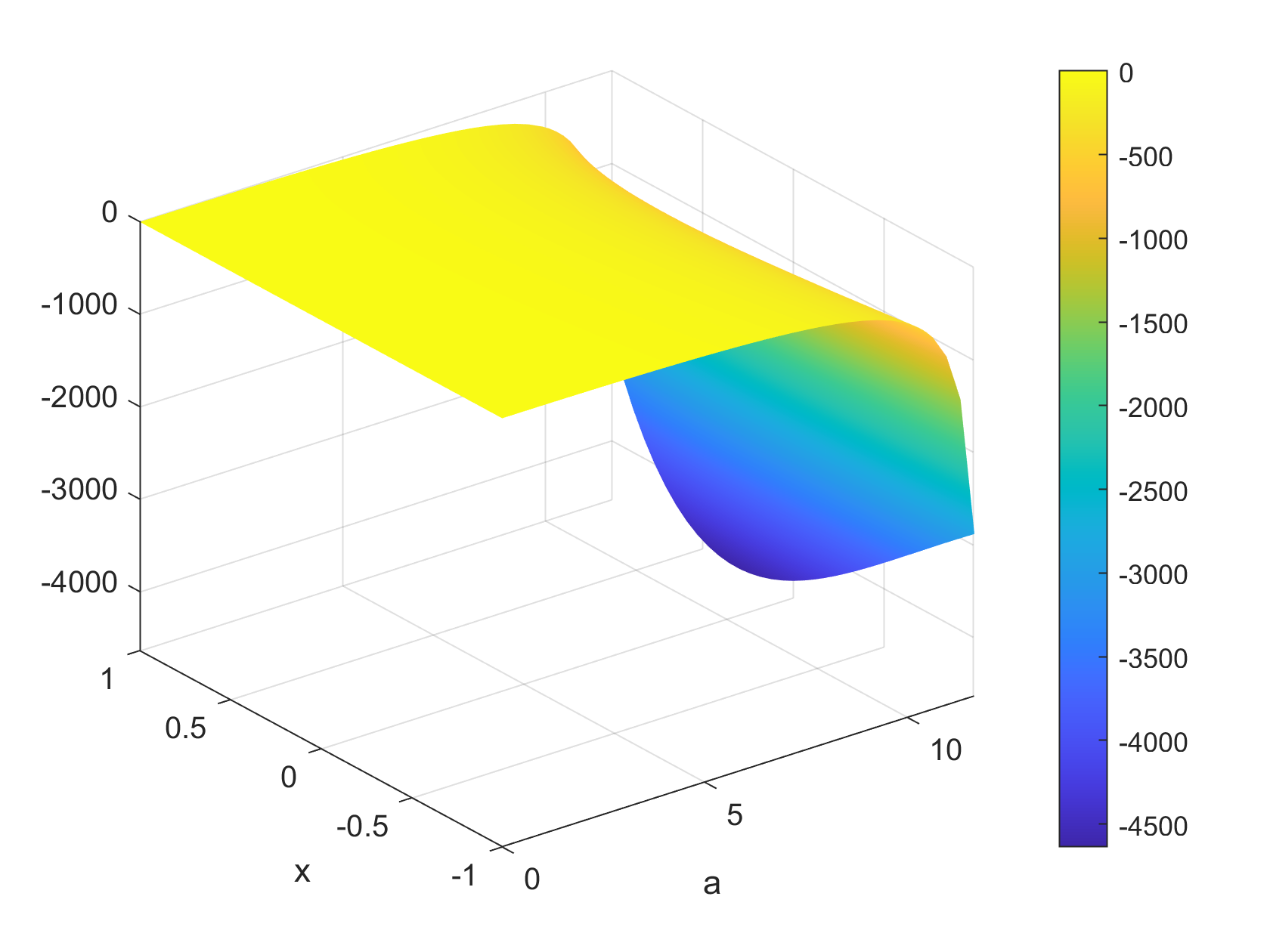}
\par\end{centering}
}\subfloat[$v_{0,N}^{\text{true}}$ ($N=6$)]{\begin{centering}
\includegraphics[scale=0.115]{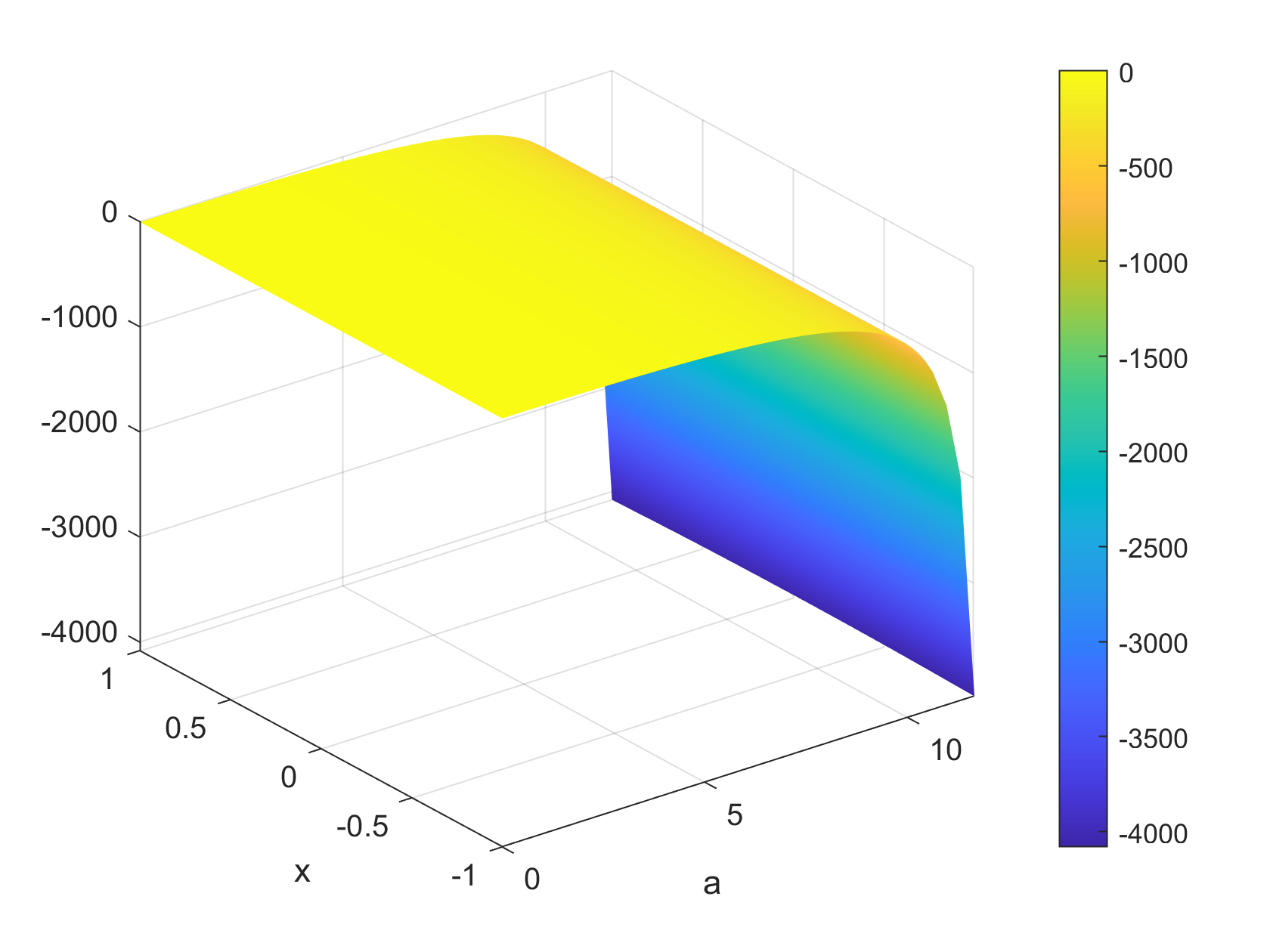}
\par\end{centering}
}
\par\end{centering}
\begin{centering}
\subfloat[$v_{0}^{\text{true}}$]{\begin{centering}
\includegraphics[scale=0.2]{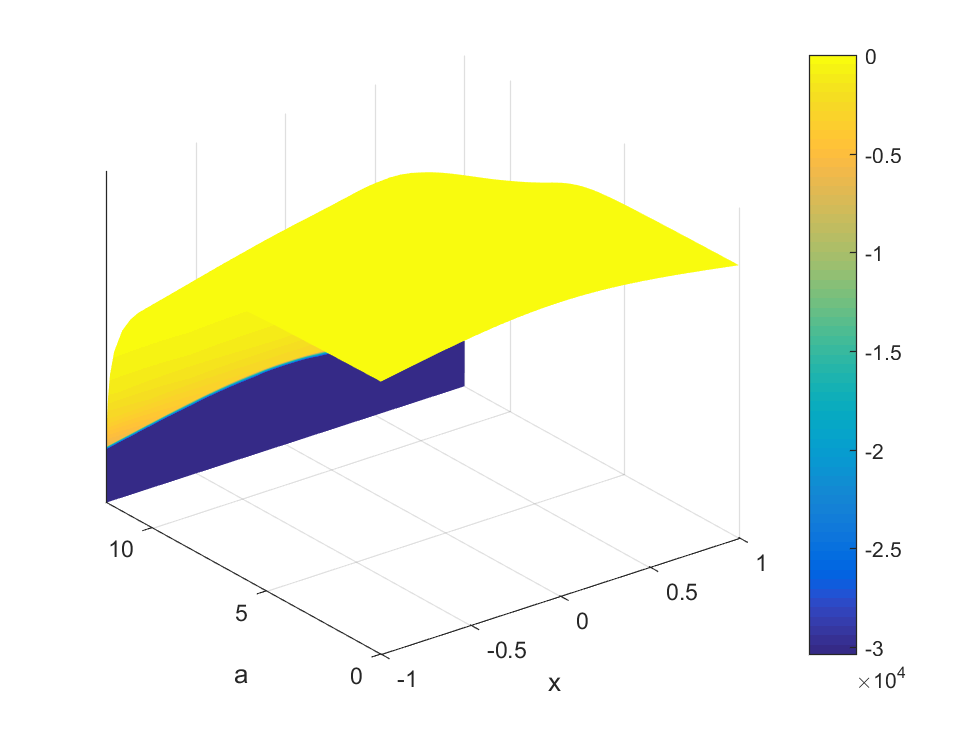}
\par\end{centering}
}\subfloat[$v_{0,N}^{\text{true}}$ ($N=2$)]{\begin{centering}
\includegraphics[scale=0.2]{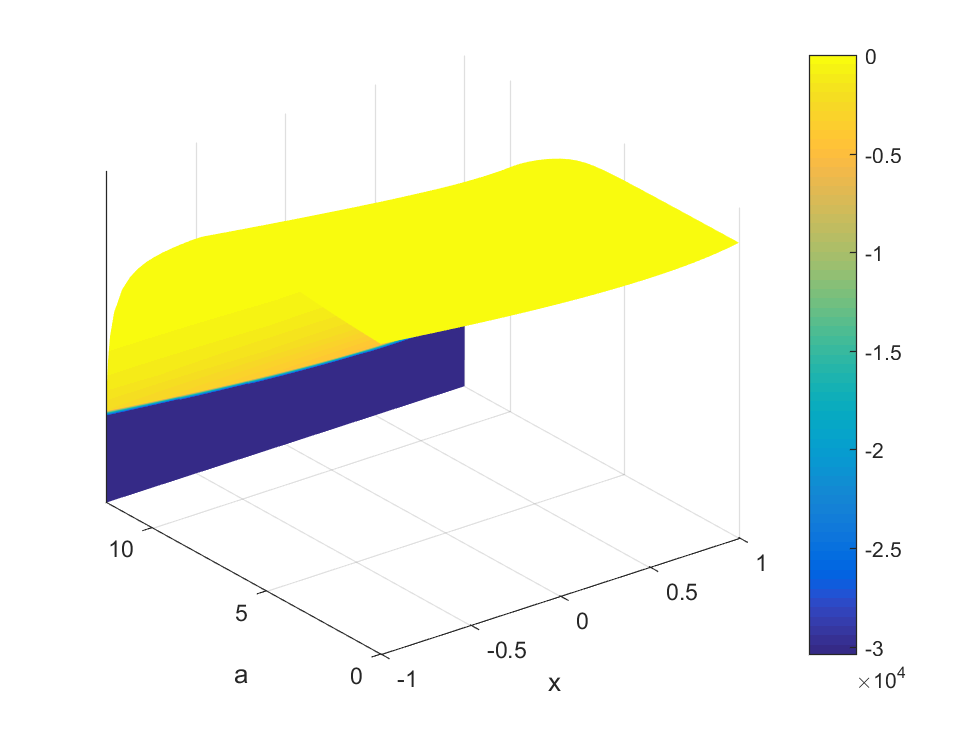}
\par\end{centering}
}\subfloat[$v_{0,N}^{\text{true}}$ ($N=6$)]{\begin{centering}
\includegraphics[scale=0.2]{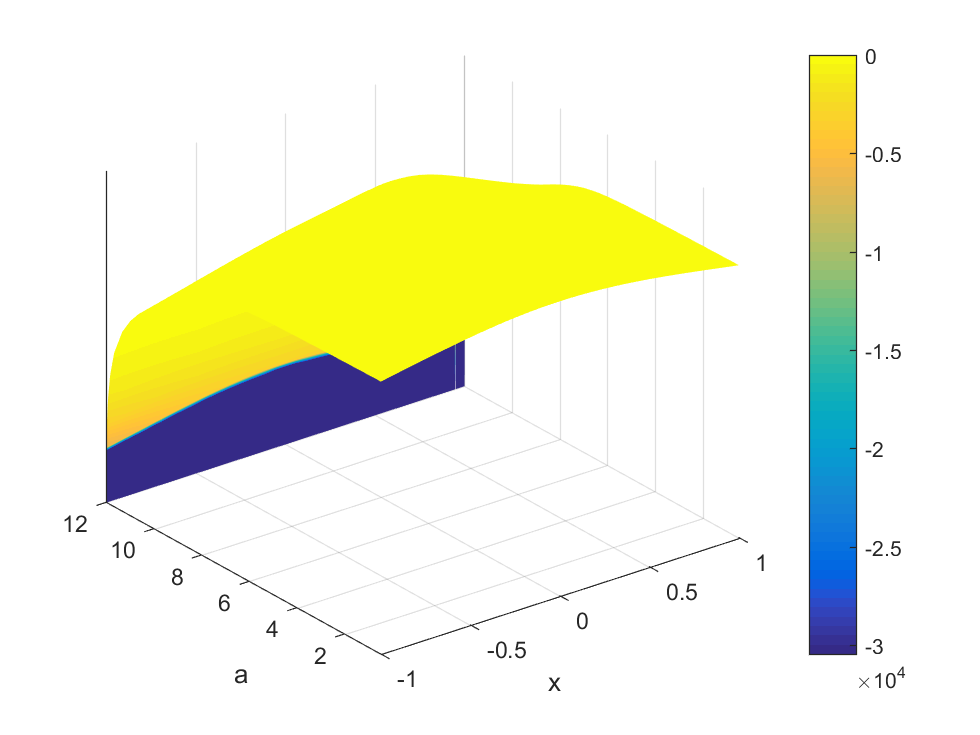}
\par\end{centering}
}
\par\end{centering}
\begin{centering}
\subfloat[$v_{0}^{\text{true}}$]{\begin{centering}
\includegraphics[scale=0.115]{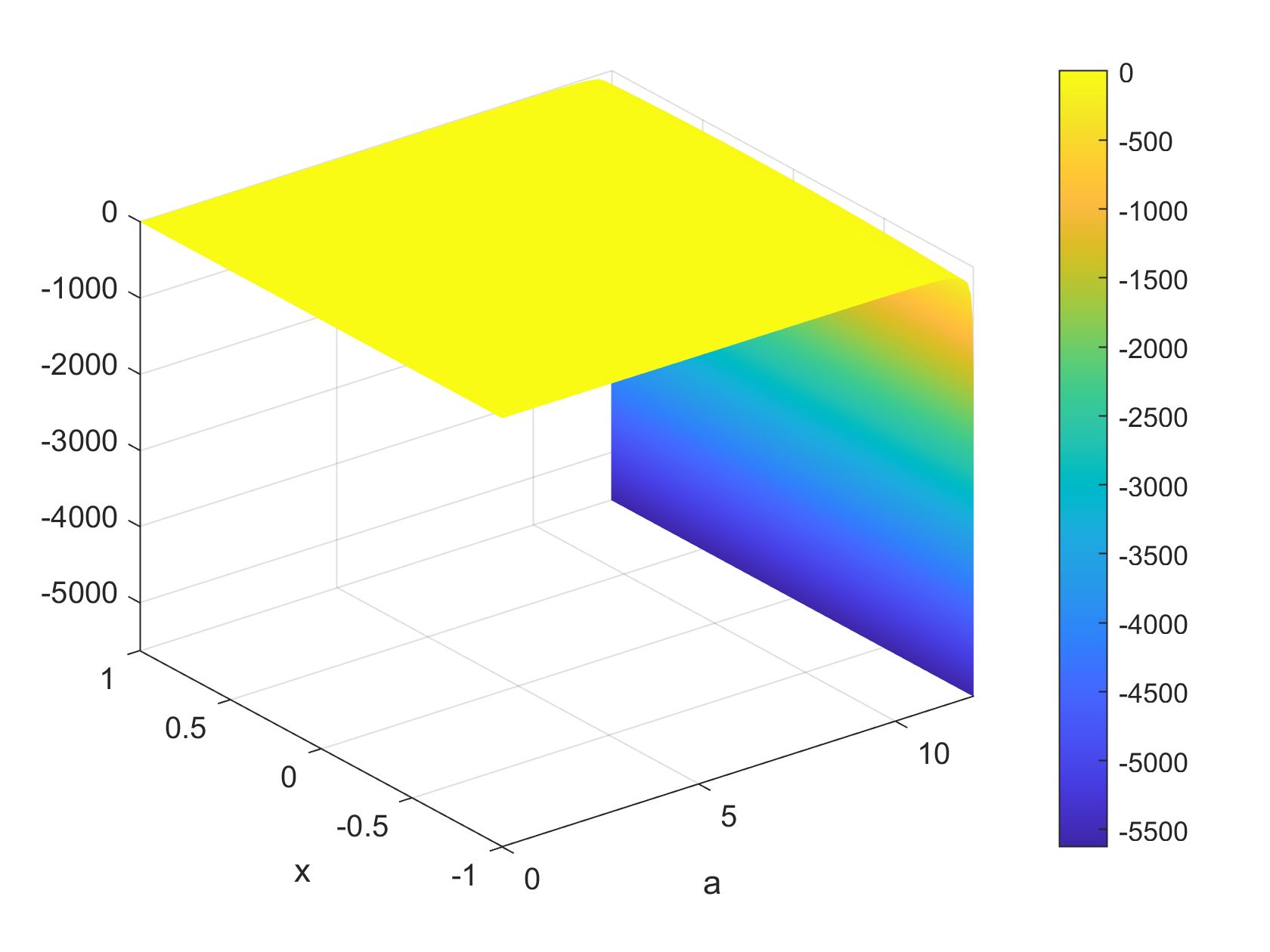}
\par\end{centering}
}\subfloat[$v_{0,N}^{\text{true}}$ ($N=2$)]{\begin{centering}
\includegraphics[scale=0.115]{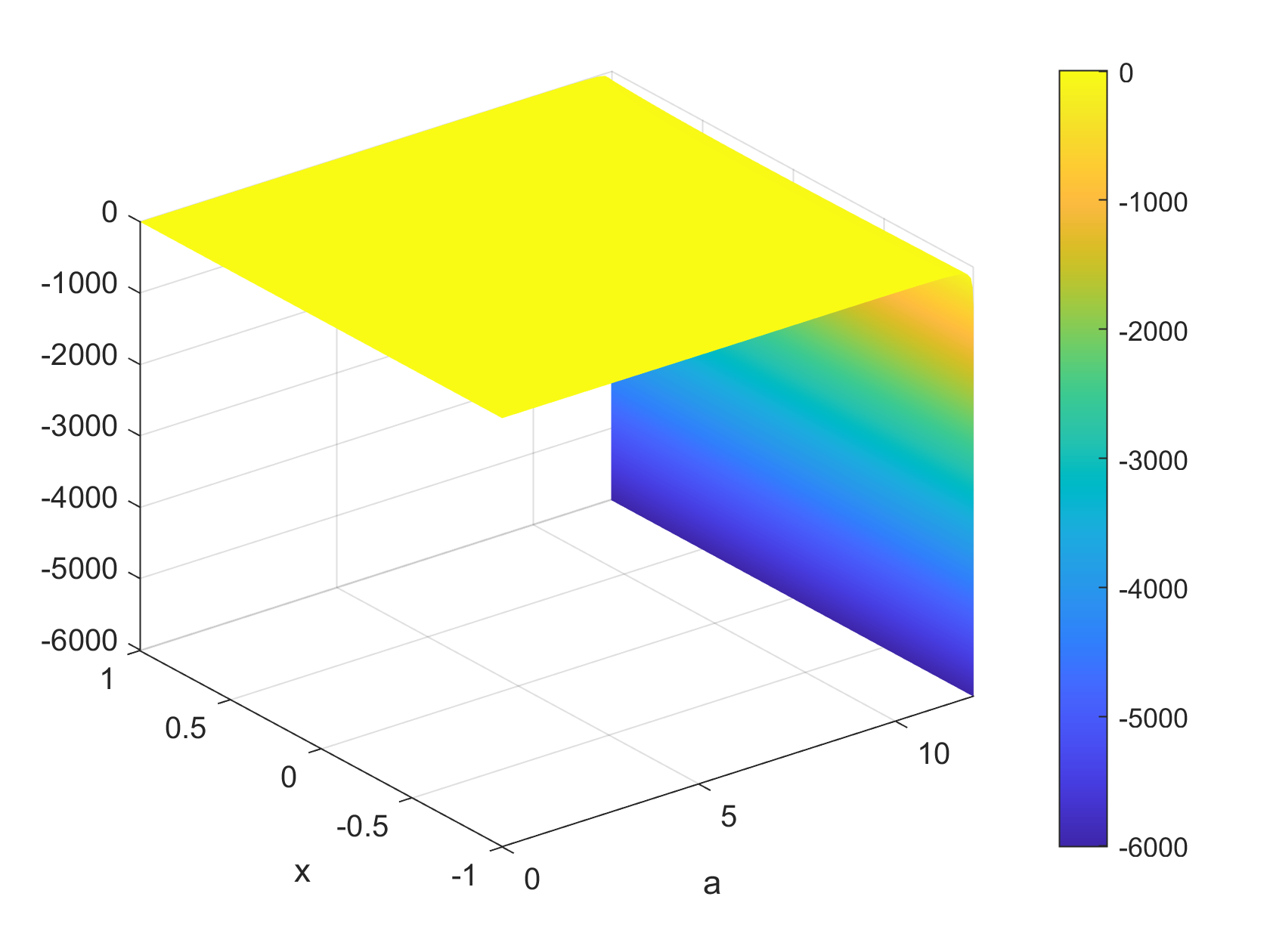}
\par\end{centering}
}\subfloat[$v_{0,N}^{\text{true}}$ ($N=6$)]{\begin{centering}
\includegraphics[scale=0.115]{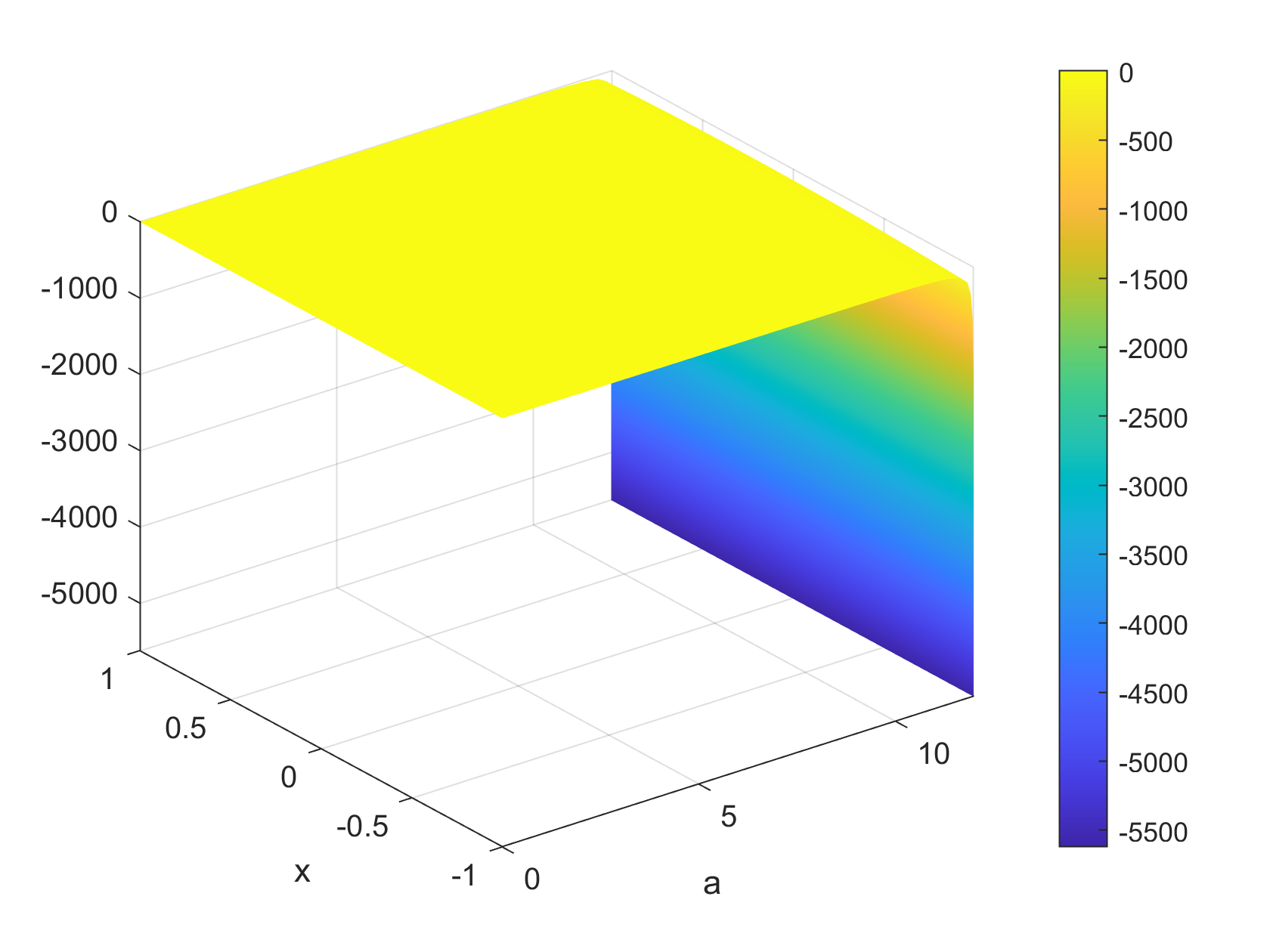}
\par\end{centering}
}
\par\end{centering}
\caption{Graphical illustrations of $v_{0}^{\text{true}}$ and $v_{0,N}^{\text{true}}$
for $N=2,6$ for Example 1 (row 1), Example 2 (row 2), and Example
3 (row 3). The mesh grid of space and age are fixed with $\Delta x=\Delta t=0.05$.
By these graphical observations, $N=6$ would be the best choice for
all numerical experiments because of its very high accuracy and fast-computing.\label{fig:1}}

\end{figure}
\par\end{center}

\subsection{Example 1: Semiellipse initial profiles with youngster's immobility}

We begin by a numerical example with the semiellipse-shaped initial
profiles near the boundary $a=0$. In particular, for $\varepsilon=0.75$,
we choose
\begin{align}
u_{0}\left(a,x\right) & =\frac{1}{\sqrt{2\pi}\varepsilon}\exp\left(-\frac{1}{2\varepsilon^{2}}\left(x^{2}+\left(a-0.15\right)^{2}\right)\right),\label{eq:4.1}\\
\overline{u}_{0}\left(t,x\right) & =\frac{1}{\sqrt{2\pi}\varepsilon}\exp\left(-\frac{1}{2\varepsilon^{2}}\left(x^{2}+\left(t-0.15\right)^{2}\right)\right),\label{eq:4.2}
\end{align}
which satisfy the compatibility condition $u_{0}\left(0,\cdot\right)=\overline{u}_{0}\left(0,\cdot\right)$;
see Figure \ref{fig:Ex1-1} for the graphical illustrations of the
initial profile $u_{0}$. Besides, we choose $\rho=0.5$ for the net
proliferation rate, and the diffusion term is chosen as 

\[
D\left(t,a\right)=0.03-0.03\exp\left(-\frac{\left(\frac{a_{\dagger}}{8}-a\right)^{2}}{a}\right),
\]
indicating that ``youngster'' individuals are less mobile than newborn
and old.

In our numerical experiments, we do not know the exact solutions,
but we can generate data to evaluate the accuracy of the approximation.
In this sense, numerical convergence can be assessed by examining
numerical stability and thus, by the behavior of computed solutions
at different time points and for various values of $M$. Figure \ref{fig:Ex1-2}
shows the computed solutions for $M=200,400,800$ and at different
time points $t=2.5,5.0,7.5,10.0$. We can see that the values of the
approximate solutions become more stable as $M$ increases. Moreover,
the approximate solutions behave similarly at every time observation,
as shown in the first row of \ref{fig:Ex1-2}.

To assess better the numerical stability, we plot the total population
as a function of time for different values of $M=100,200,400,800$
in Figure \ref{fig:total1-1}. The total population is formulated
in Remark \ref{rem:Total}, as noted above. Figure \ref{fig:total1-1}
reveals the entire time evolution of tumor cells, with the total density
significantly increasing within the first quarter of the time frame.
Starting from a small amount at $t=0$ (see values of $u_{0}$ in
Figure \ref{fig:u03d}), the total density peaks at about $t=2.1$,
reaching 18 million cells/cm. Afterward, the total density gradually
tends towards extinction within the next six months.

In addition, the simulation in Figure \ref{fig:Ex3-2} shows that
tumor cells are slightly localized along the direction of $a$ between
the overpopulation and extinction phases.
\begin{rem*}
It is worth noting that since the final time observation $T$ and
maximal age $a_{\dagger}$ are large, we may need huge values of $M$
to get better approximation. In fact, by the explicit numerical scheme
being used, the local truncation error is of the order one. 
\end{rem*}
\begin{center}
\begin{figure}
\begin{centering}
\subfloat[$u_{0}$ (3D)\label{fig:u03d}]{\begin{centering}
\includegraphics[scale=0.2]{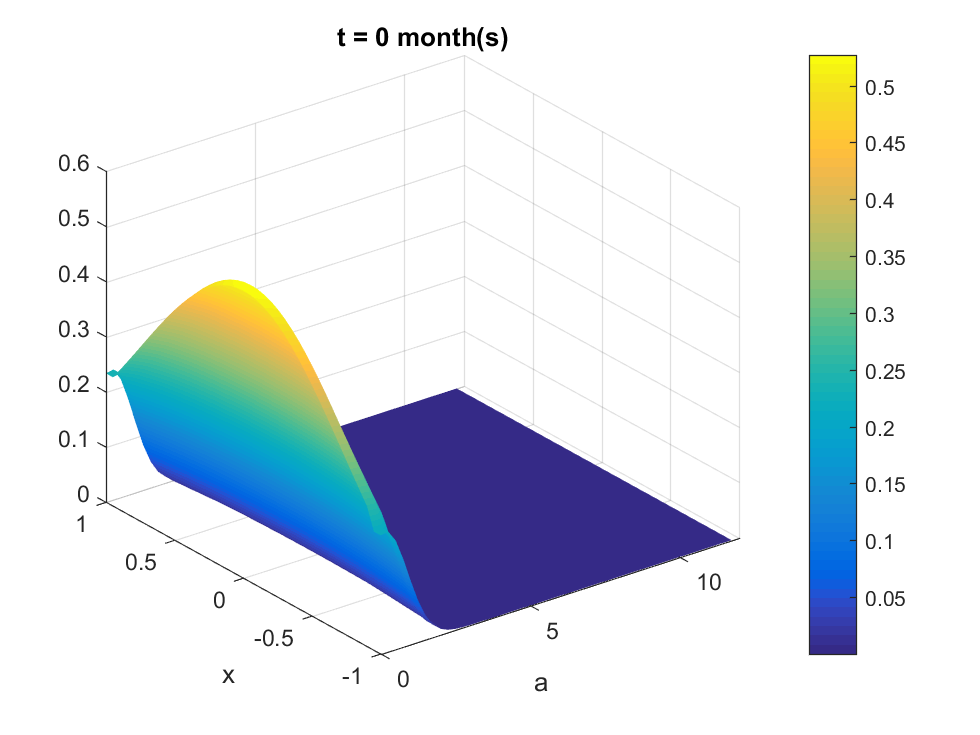}
\par\end{centering}
}\subfloat[$u_{0}$ (2D)]{\begin{centering}
\includegraphics[scale=0.2]{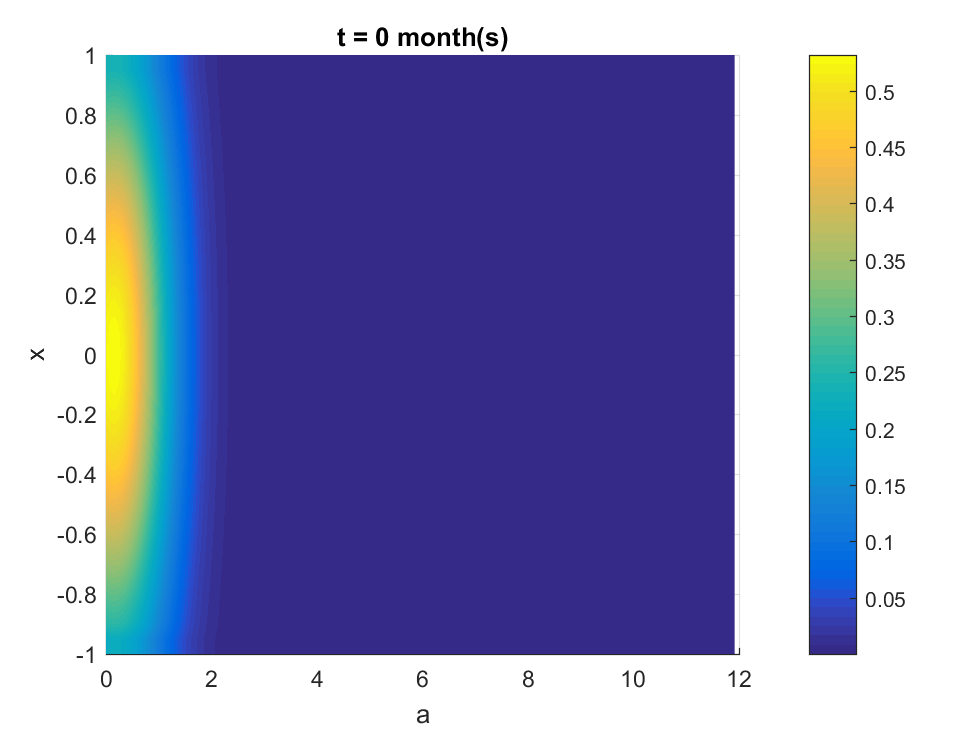}
\par\end{centering}
}\subfloat[$p\left(t\right)$\label{fig:total1-1}]{\begin{centering}
\includegraphics[scale=0.2]{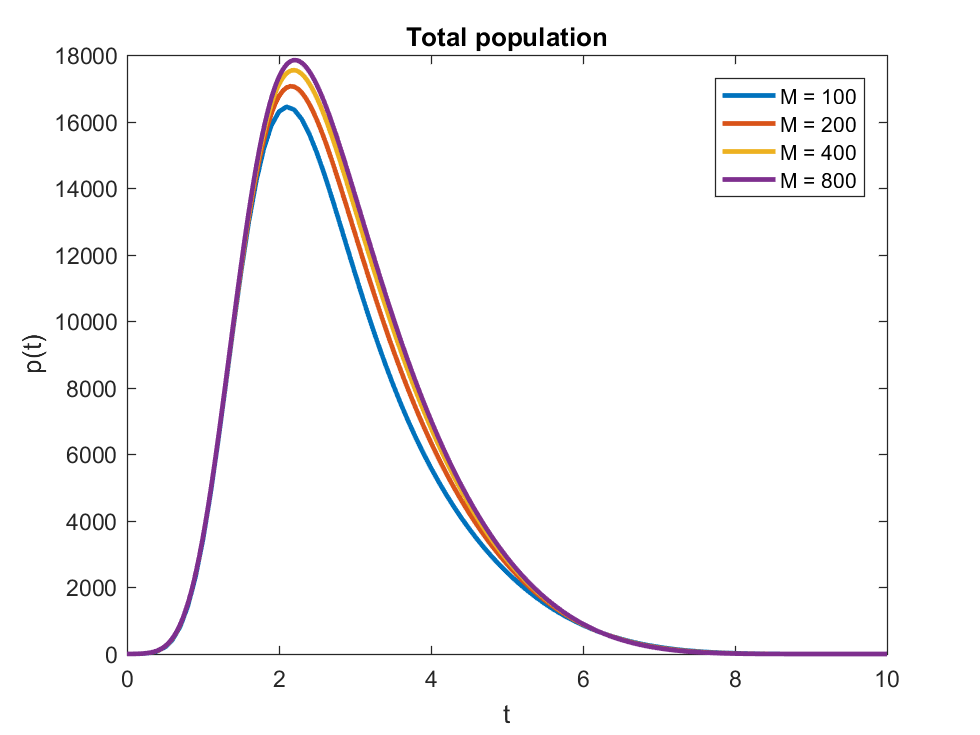}
\par\end{centering}
}
\par\end{centering}
\caption{Left: 3D representation of the initial data $u_{0}$ in Example 1.
Middle: 2D representation of the initial data $u_{0}$ in Example
1. Right: Total population of tumor cells $p\left(t\right)$ for varying
values of $M$.\label{fig:Ex1-1}}

\end{figure}
\par\end{center}

\begin{center}
\begin{figure}
\begin{centering}
\subfloat[$M=200$]{\begin{centering}
\includegraphics[scale=0.2]{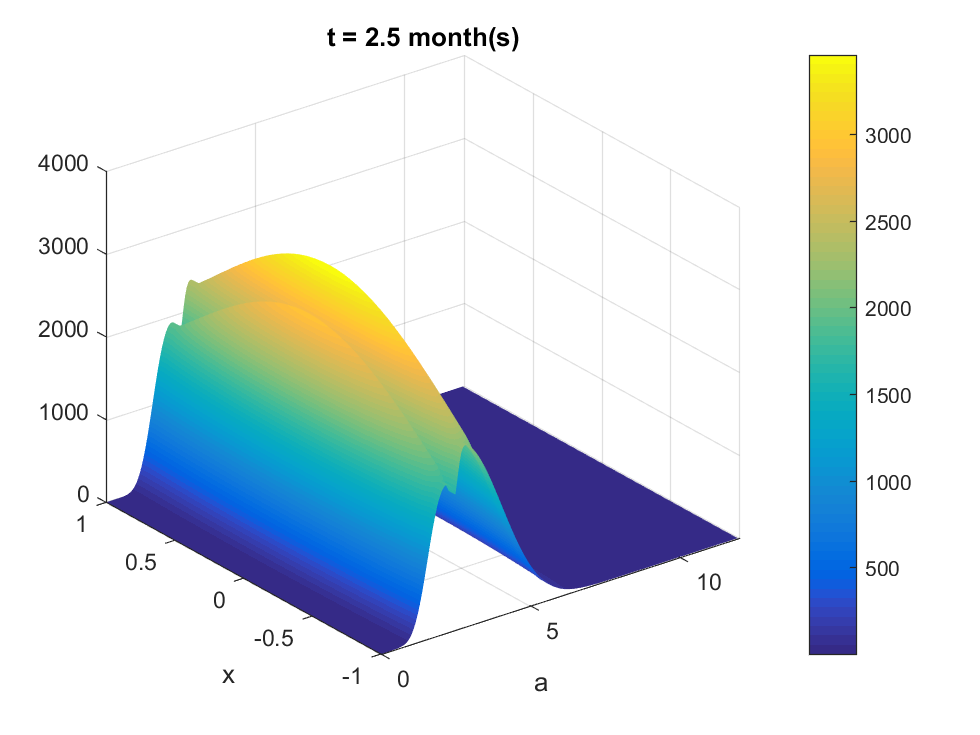}
\par\end{centering}
}\subfloat[$M=400$]{\begin{centering}
\includegraphics[scale=0.2]{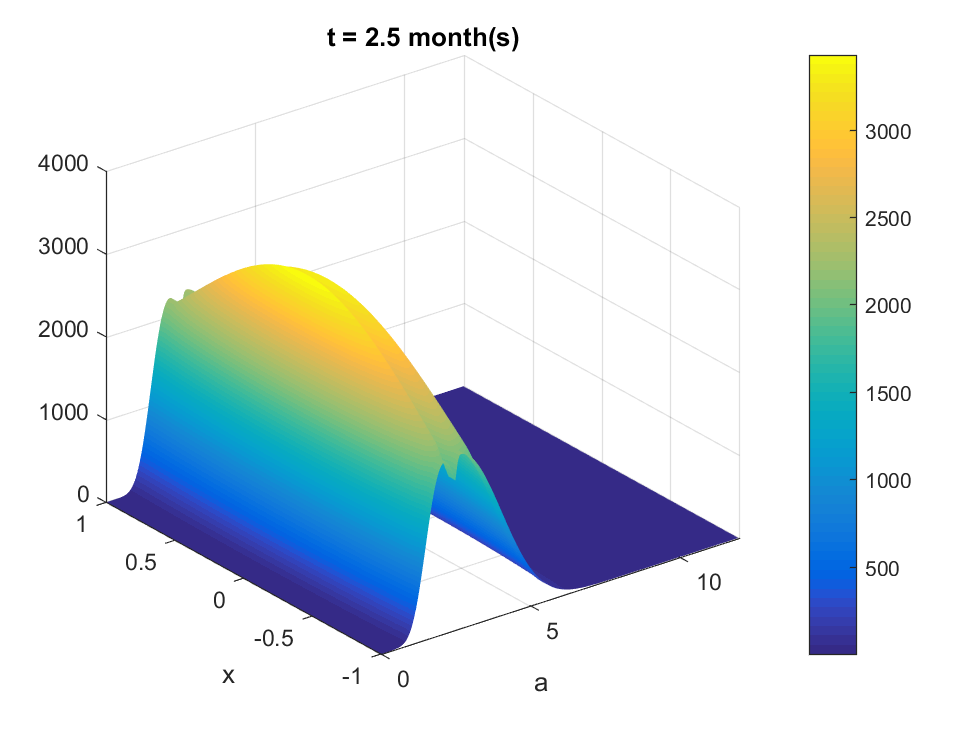}
\par\end{centering}
}\subfloat[$M=800$]{\begin{centering}
\includegraphics[scale=0.2]{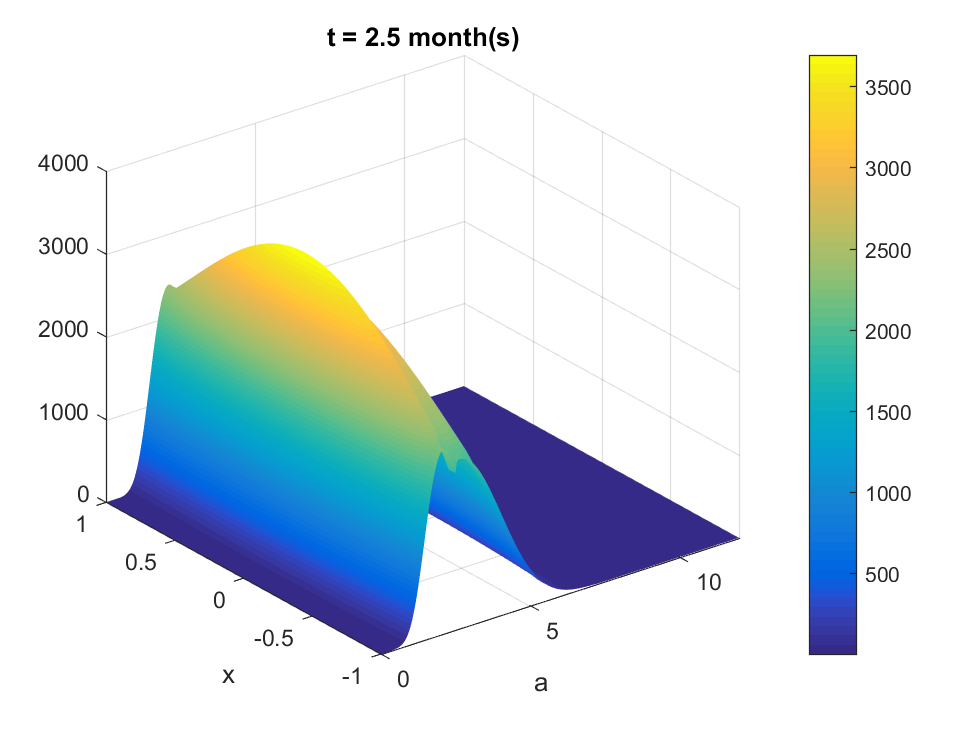}
\par\end{centering}
}
\par\end{centering}
\begin{centering}
\subfloat[$M=200$]{\begin{centering}
\includegraphics[scale=0.2]{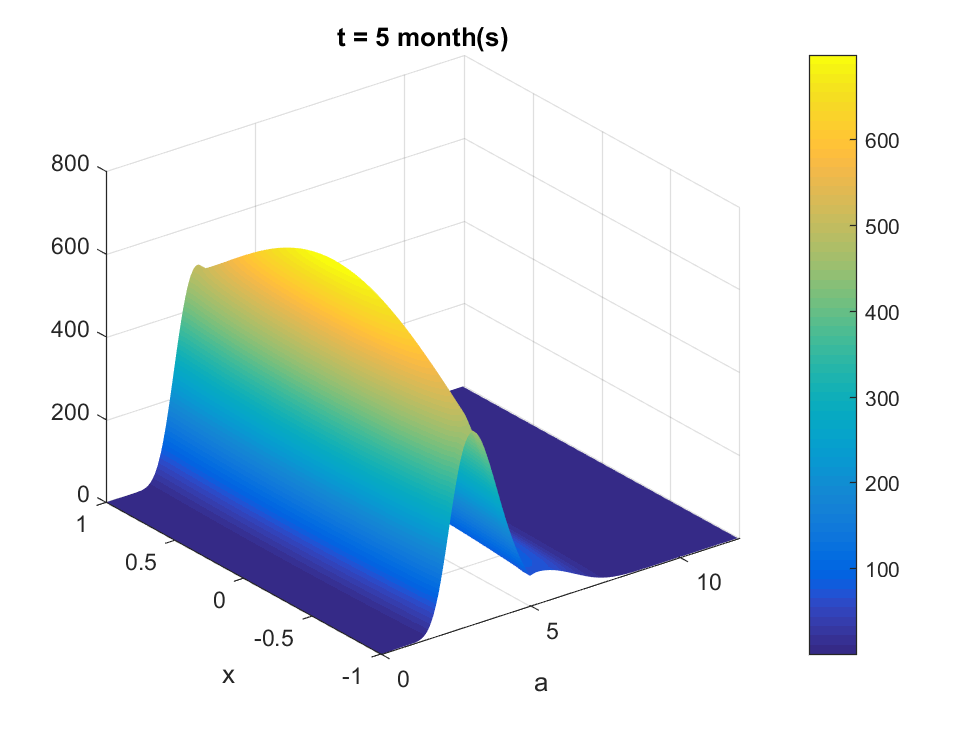}
\par\end{centering}
}\subfloat[$M=400$]{\begin{centering}
\includegraphics[scale=0.2]{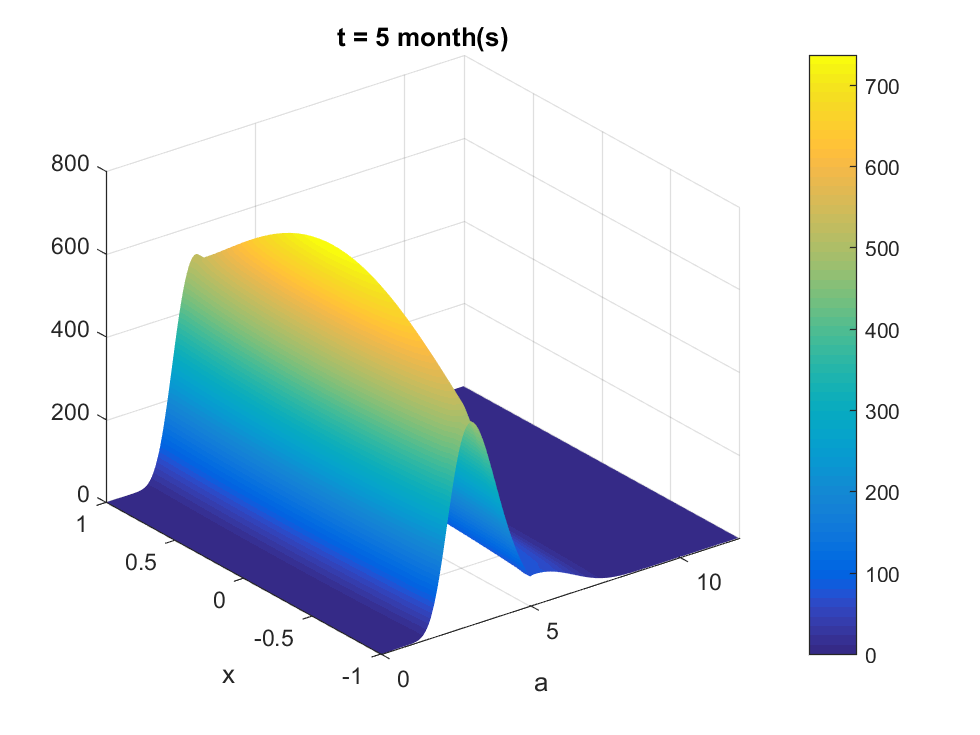}
\par\end{centering}
}\subfloat[$M=800$]{\begin{centering}
\includegraphics[scale=0.2]{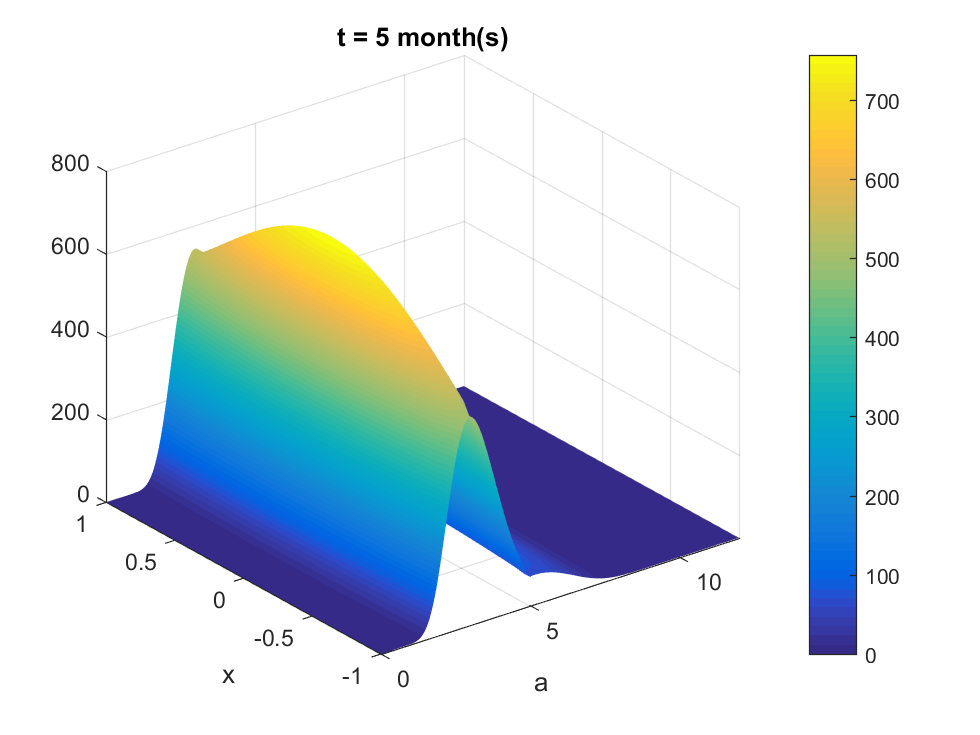}
\par\end{centering}
}
\par\end{centering}
\begin{centering}
\subfloat[$M=200$]{\begin{centering}
\includegraphics[scale=0.2]{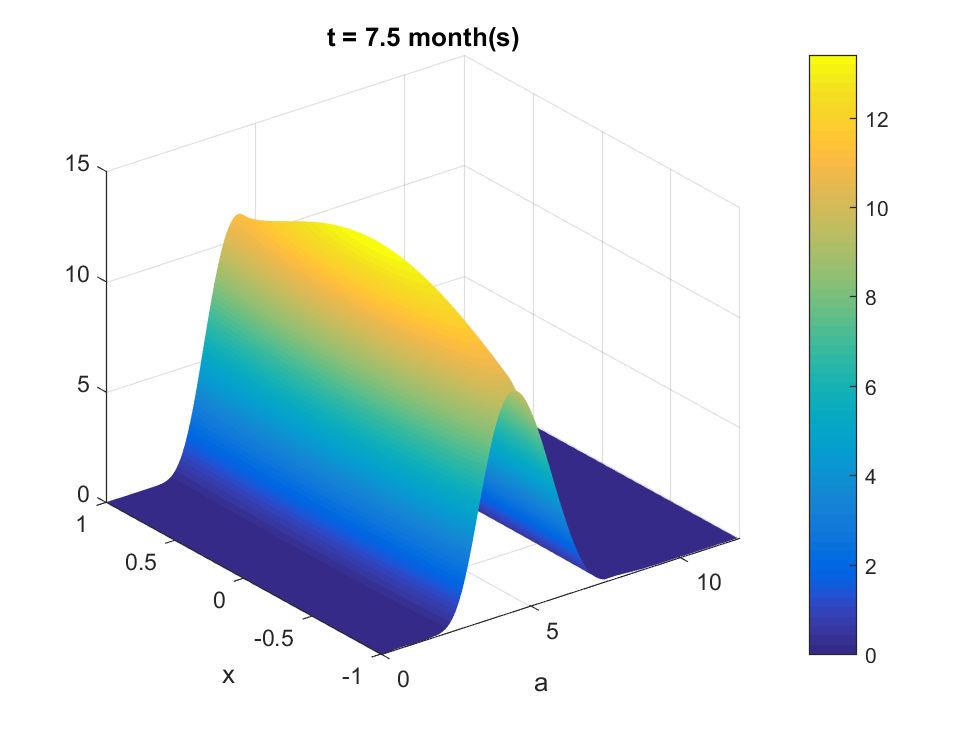}
\par\end{centering}
}\subfloat[$M=400$]{\begin{centering}
\includegraphics[scale=0.2]{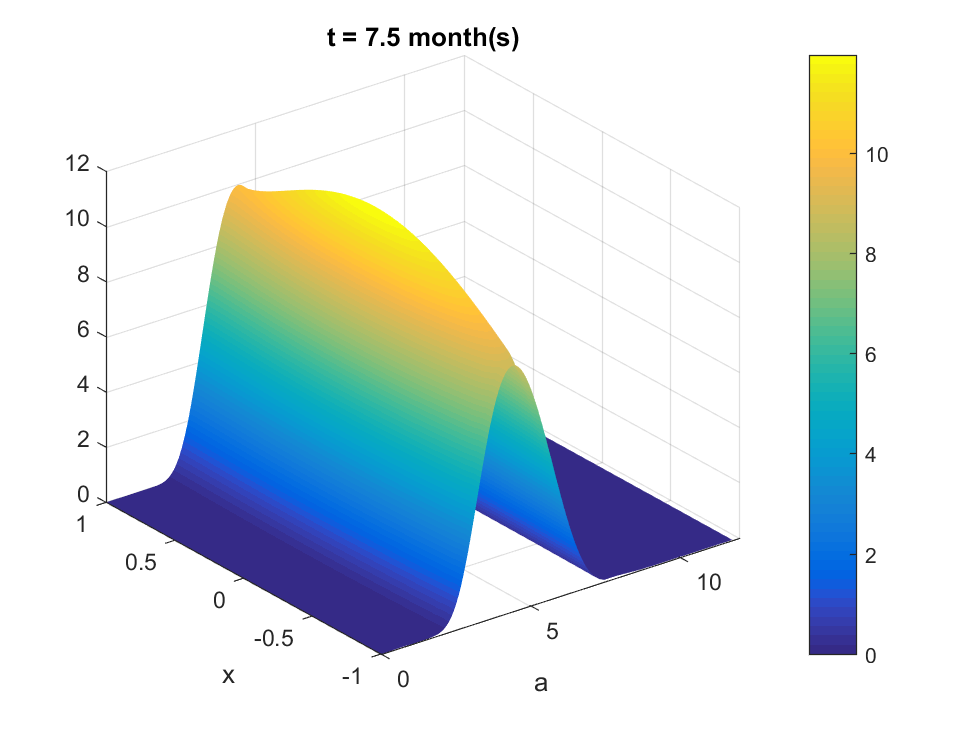}
\par\end{centering}
}\subfloat[$M=800$]{\begin{centering}
\includegraphics[scale=0.2]{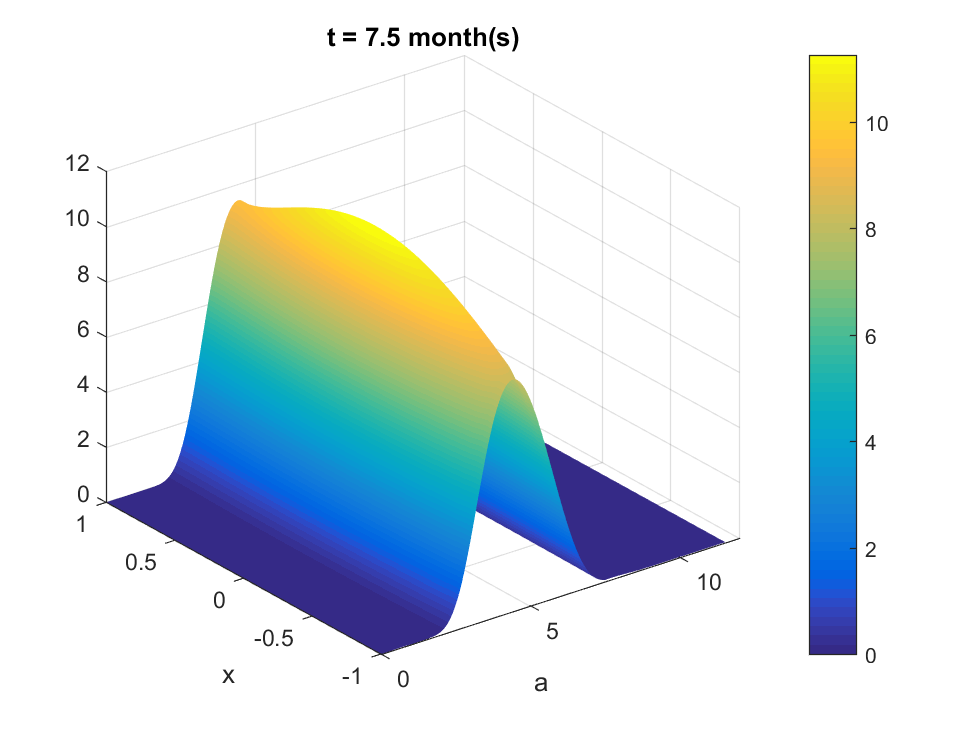}
\par\end{centering}
}
\par\end{centering}
\begin{centering}
\subfloat[$M=200$]{\begin{centering}
\includegraphics[scale=0.2]{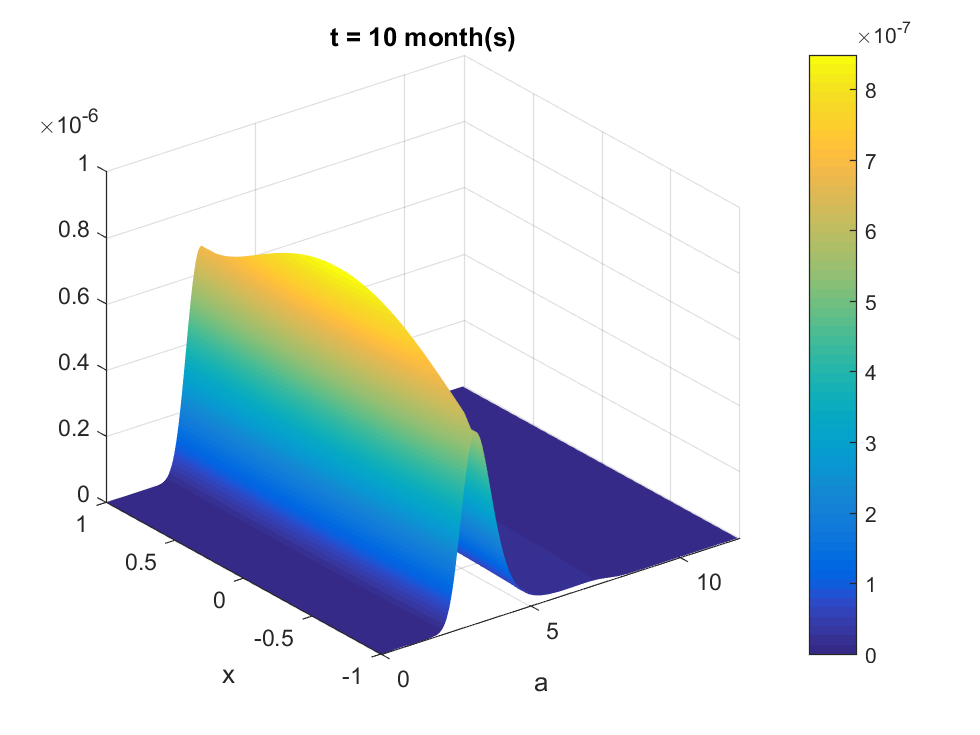}
\par\end{centering}
}\subfloat[$M=400$]{\begin{centering}
\includegraphics[scale=0.2]{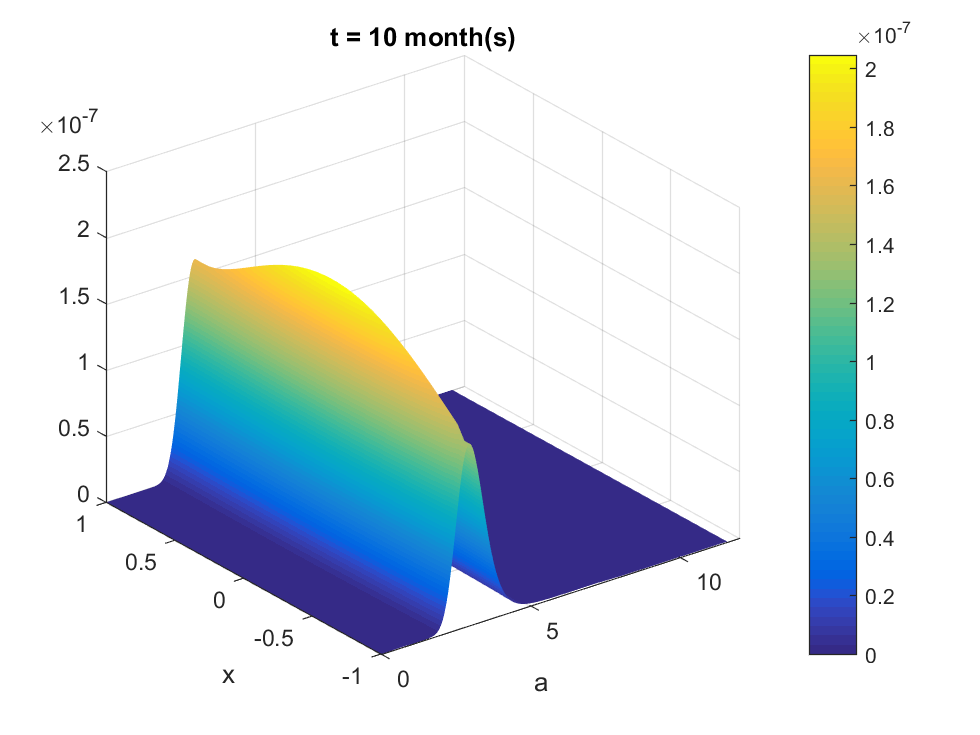}
\par\end{centering}
}\subfloat[$M=800$]{\begin{centering}
\includegraphics[scale=0.2]{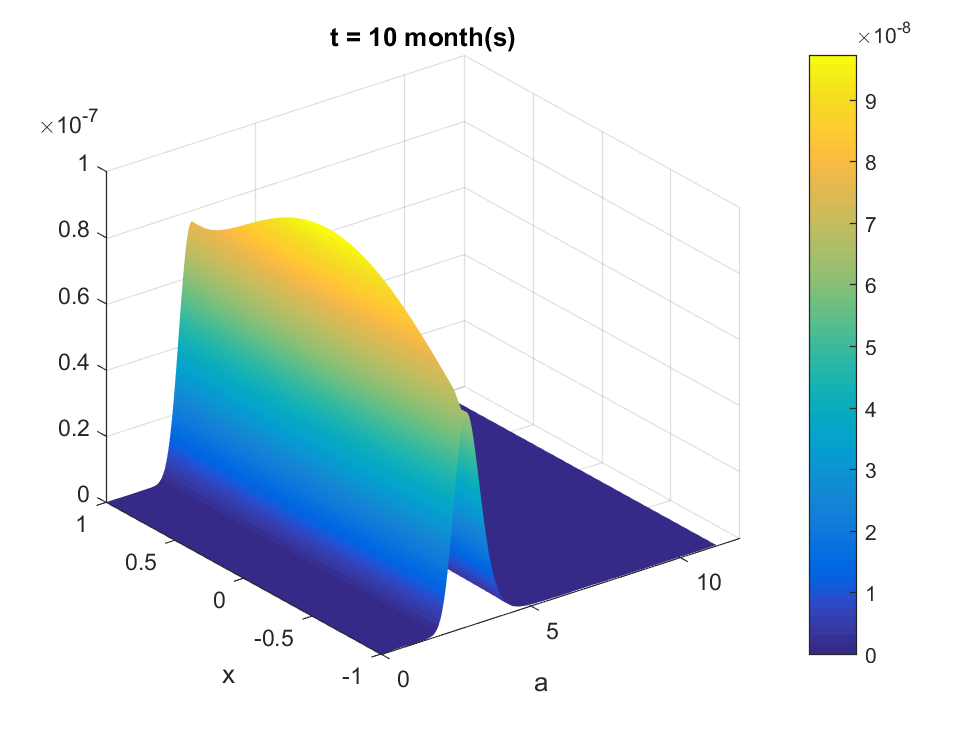}
\par\end{centering}
}
\par\end{centering}
\caption{Tumor cell density in Example 1 at $t=2.5,5.0,7.5,10.0$ for various
values of $M$. Column 1: $M=200,\Delta t=0.05$ (36 hours). Column
2: $M=400,\Delta t=0.025$ (18 hours). Column 3: $M=800,\Delta t=0.0125$
(9 hours).\label{fig:Ex1-2}}
\end{figure}
\par\end{center}

\subsection{Example 2: Gaussian initial profiles with elder's immobility}

In the second example, we take into account the Gaussian initial profiles
of the following form, for $\varepsilon=0.075$,
\[
u_{0}\left(a,x\right)=\frac{e^{-6x^{2}}}{\varepsilon+\cosh\left(a-7\right)},\quad\overline{u}_{0}\left(t,x\right)=\frac{e^{-6x^{2}}}{\varepsilon+\cosh\left(3t-7\right)}.
\]
This choice of $u_{0}$ and $\overline{u}_{0}$ satisfies the compatibility
condition $u_{0}\left(0,\cdot\right)=\overline{u}_{0}\left(0,\cdot\right)$;
however, compared to the choice in Example 1 (cf. (\ref{eq:4.1})
and (\ref{eq:4.2})), the $a$ and $t$ arguments here are not interchangeable.
Besides, we choose $\rho=7$ for a large net proliferation rate, and
the diffusion term is chosen as 
\[
D\left(t,a\right)=\exp\left(-\frac{\left(t-8T\right)^{2}}{T}\right)\left(a_{\dagger}-a\right),
\]
indicating that ``old'' individuals are very less mobile.

Our numerical results for Example 2 are presented in Figures \ref{fig:total2}
and \ref{fig:Ex2-2}. Figure \ref{fig:total2} illustrates that the
total population of tumor cells exhibits minimal variation when $M$
is varied between 100 to 800. Similarly, Figure \ref{fig:Ex2-2} demonstrates
numerical stability in the distribution of tumor cells for different
values of $M$. In contrast to Example 1, these graphs are highly
similar when $M$ is varied from 200 to 800, indicating that a too
large $M$ is unnecessary for simulating this example.

Our simulation shows that tumor cells reach a stable state quickly,
plateauing at around 2.8 thousand cells/cm across all time points
(as seen in Figure \ref{fig:Ex2-2}). The total population, shown
in Figure \ref{fig:total2}, reaches its peak of 60 thousand cells/cm
within three (3) months from an initial total population of about
3 thousand cells. Subsequently, the total population appears to stabilize
but gradually declines over time.
\begin{center}
\begin{figure}
\begin{centering}
\subfloat[$u_{0}$ (3D)\label{fig:u03d-1}]{\begin{centering}
\includegraphics[scale=0.2]{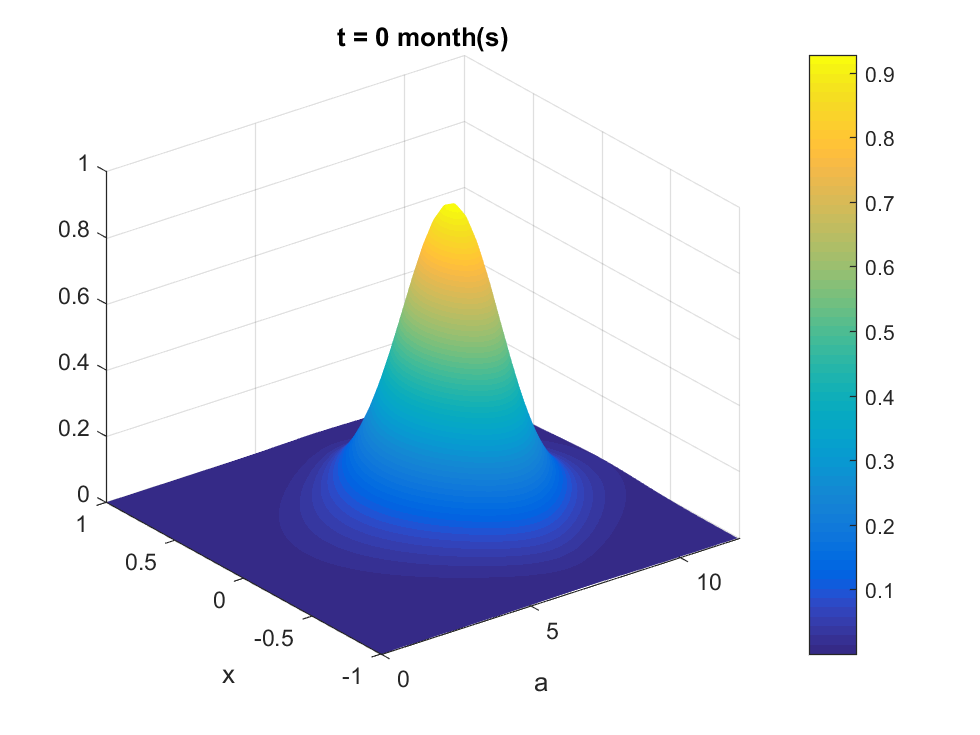}
\par\end{centering}
}\subfloat[$u_{0}$ (2D)]{\begin{centering}
\includegraphics[scale=0.2]{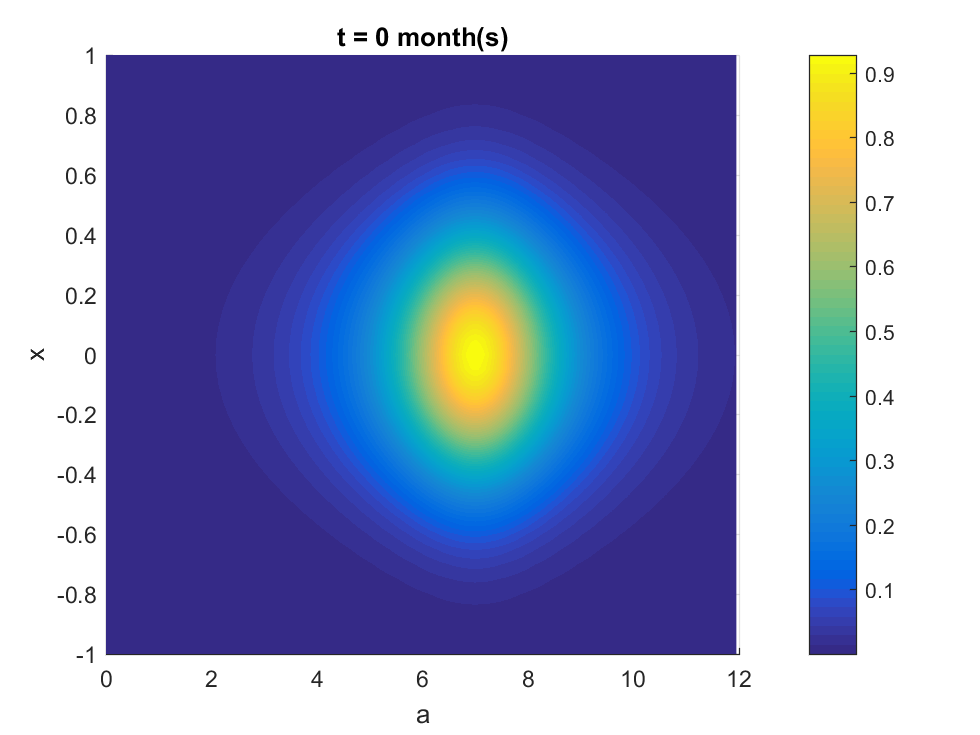}
\par\end{centering}
}\subfloat[$p\left(t\right)$\label{fig:total2}]{\begin{centering}
\includegraphics[scale=0.2]{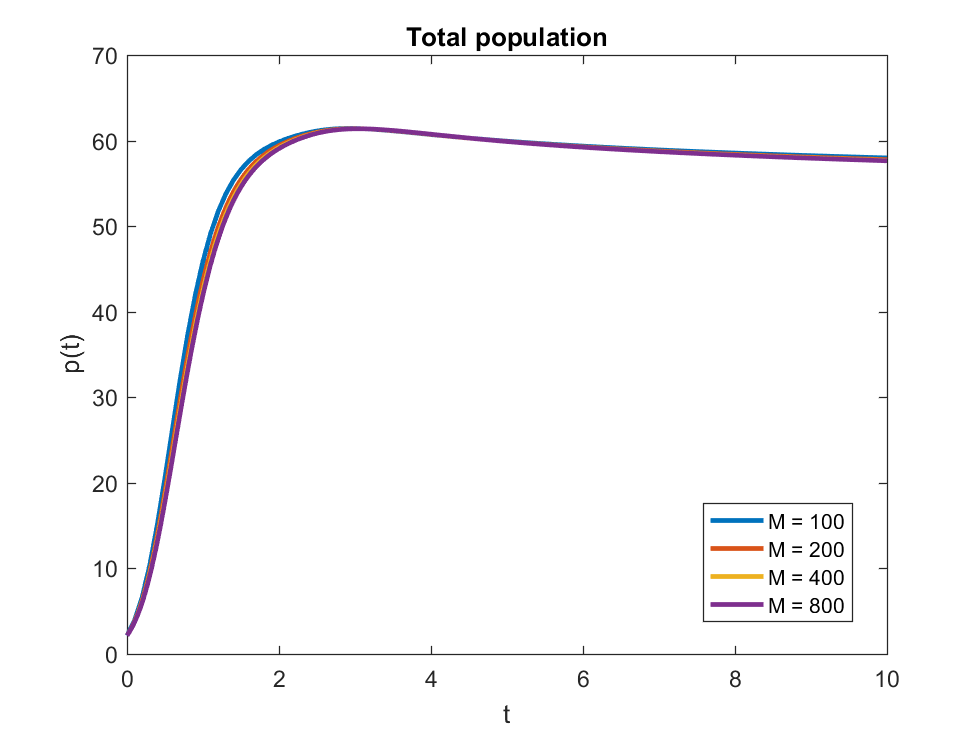}
\par\end{centering}
}
\par\end{centering}
\caption{Left: 3D representation of the initial data $u_{0}$ in Example 2.
Middle: 2D representation of the initial data $u_{0}$ in Example
2. Right: Total population of tumor cells $p\left(t\right)$ for varying
values of $M$.\label{fig:Ex2-1}}
\end{figure}
\par\end{center}

\begin{center}
\begin{figure}
\begin{centering}
\subfloat[$M=200$]{\begin{centering}
\includegraphics[scale=0.115]{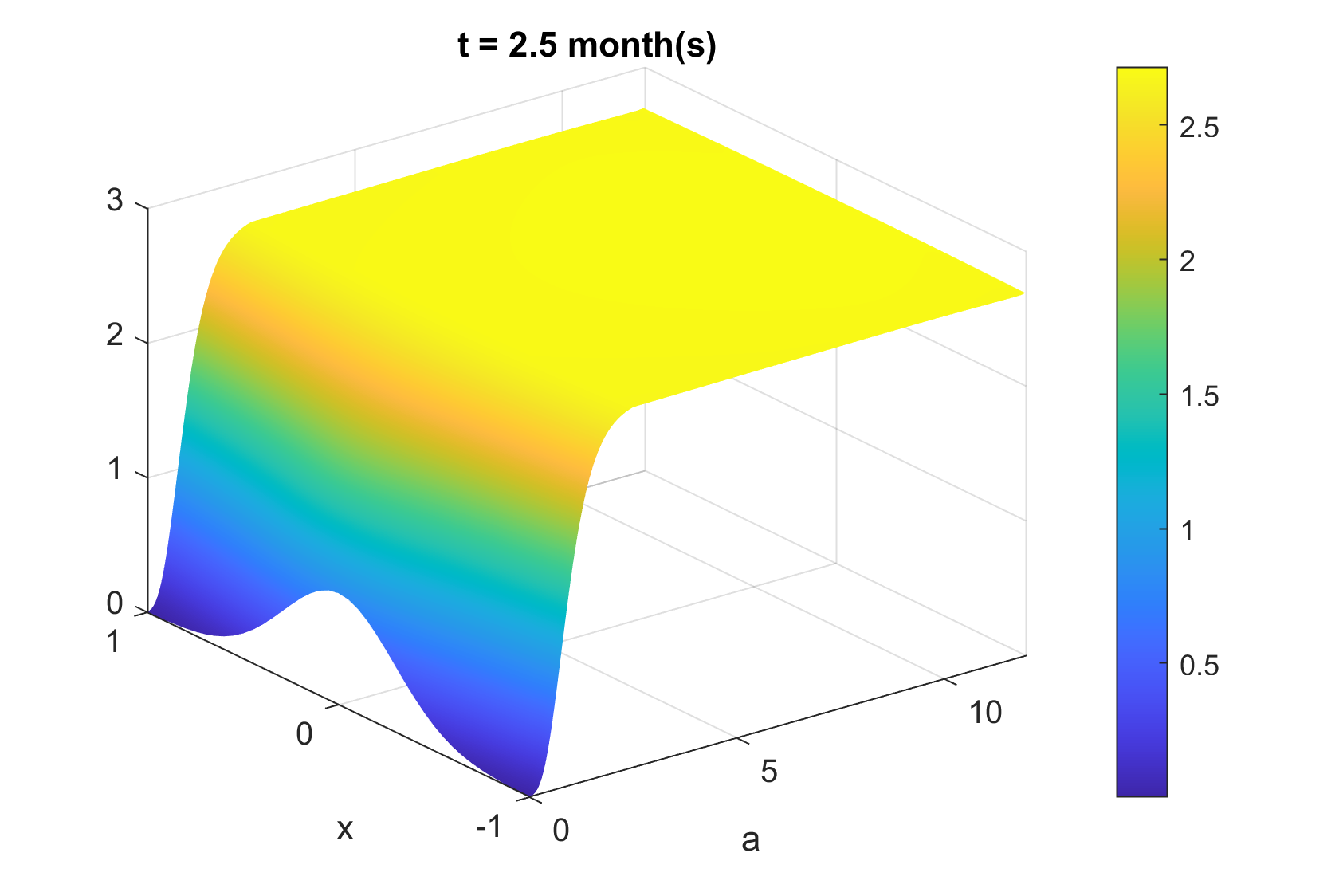}
\par\end{centering}
}\subfloat[$M=400$]{\begin{centering}
\includegraphics[scale=0.115]{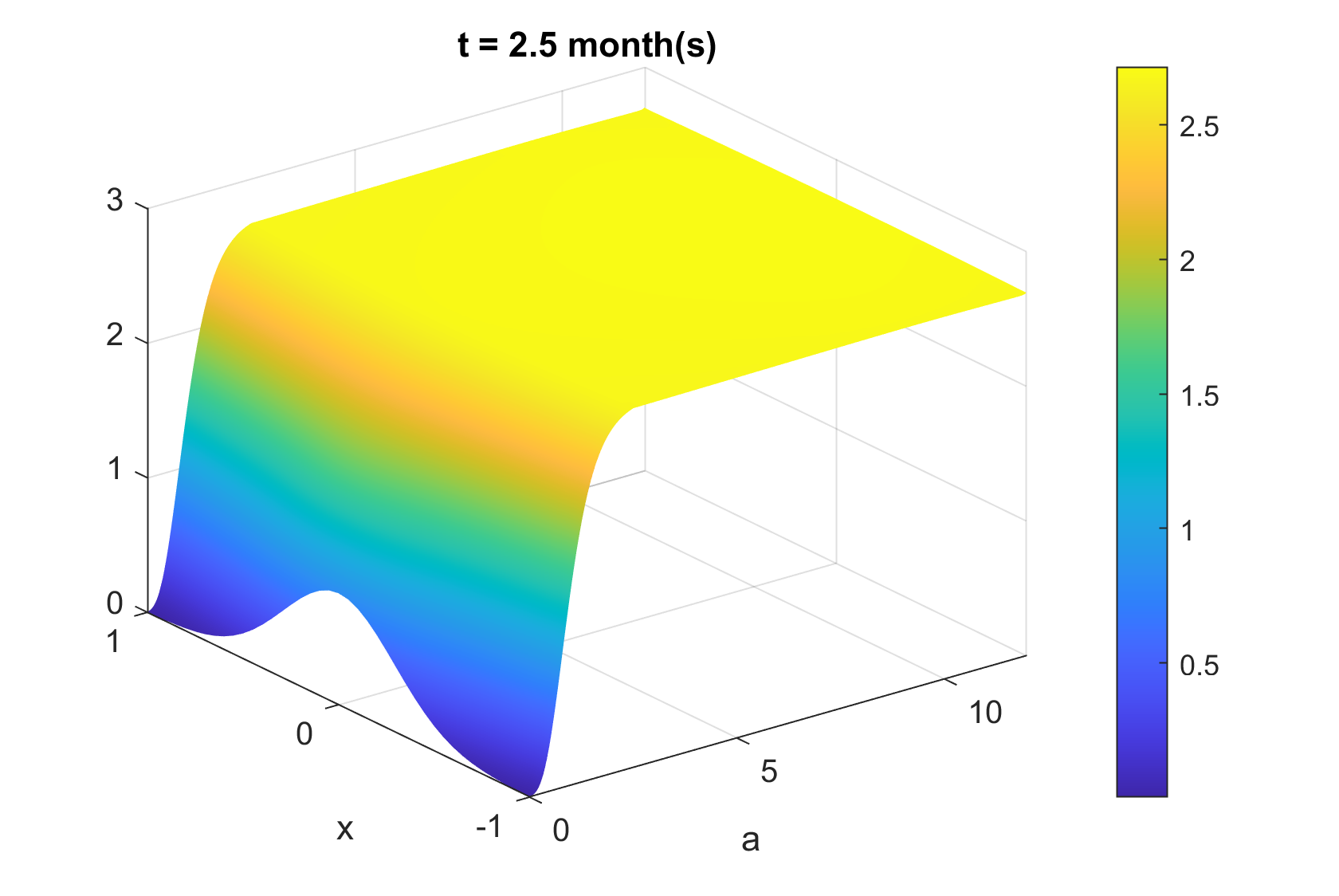}
\par\end{centering}
}\subfloat[$M=800$]{\begin{centering}
\includegraphics[scale=0.115]{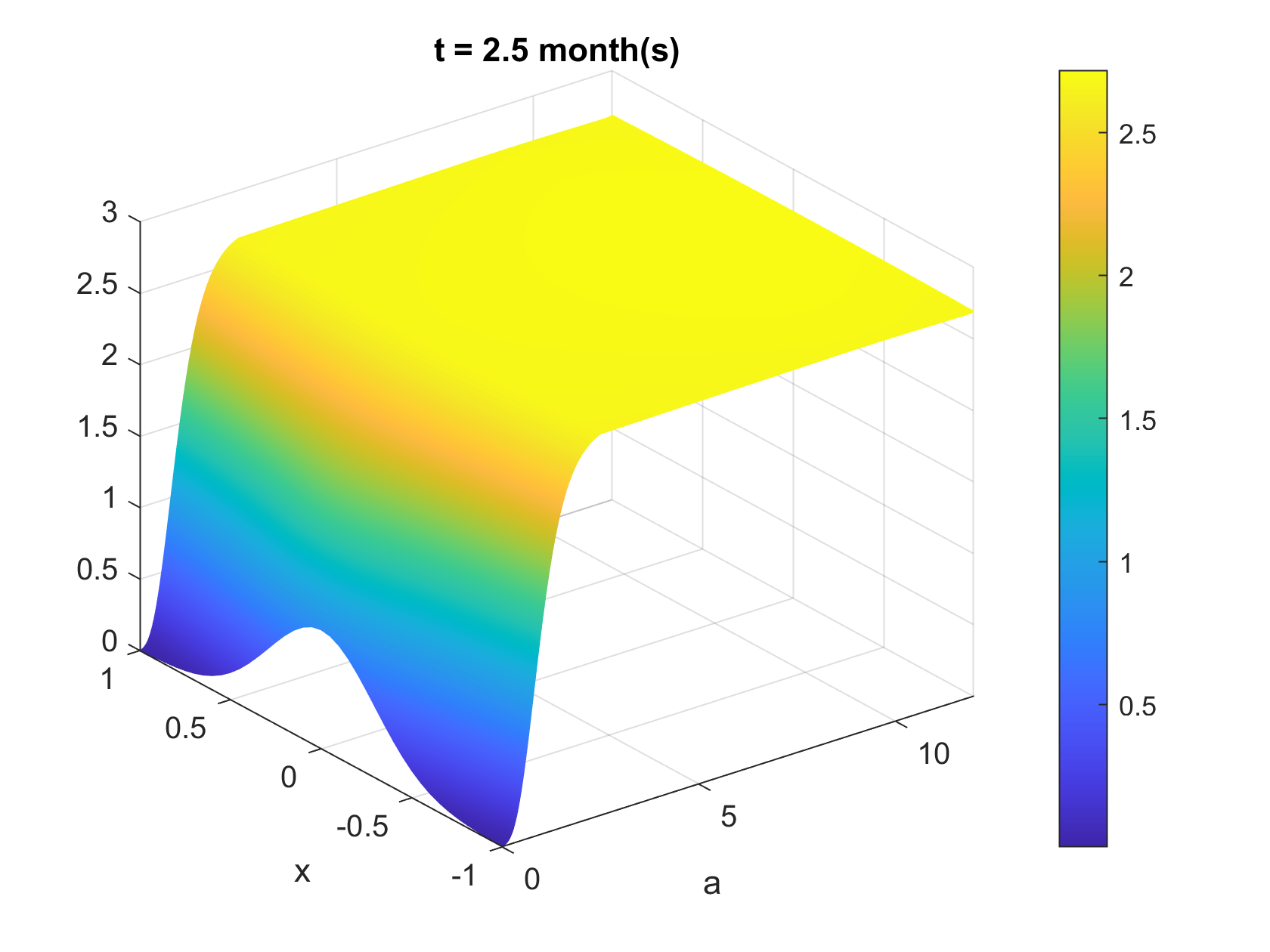}
\par\end{centering}
}
\par\end{centering}
\begin{centering}
\subfloat[$M=200$]{\begin{centering}
\includegraphics[scale=0.115]{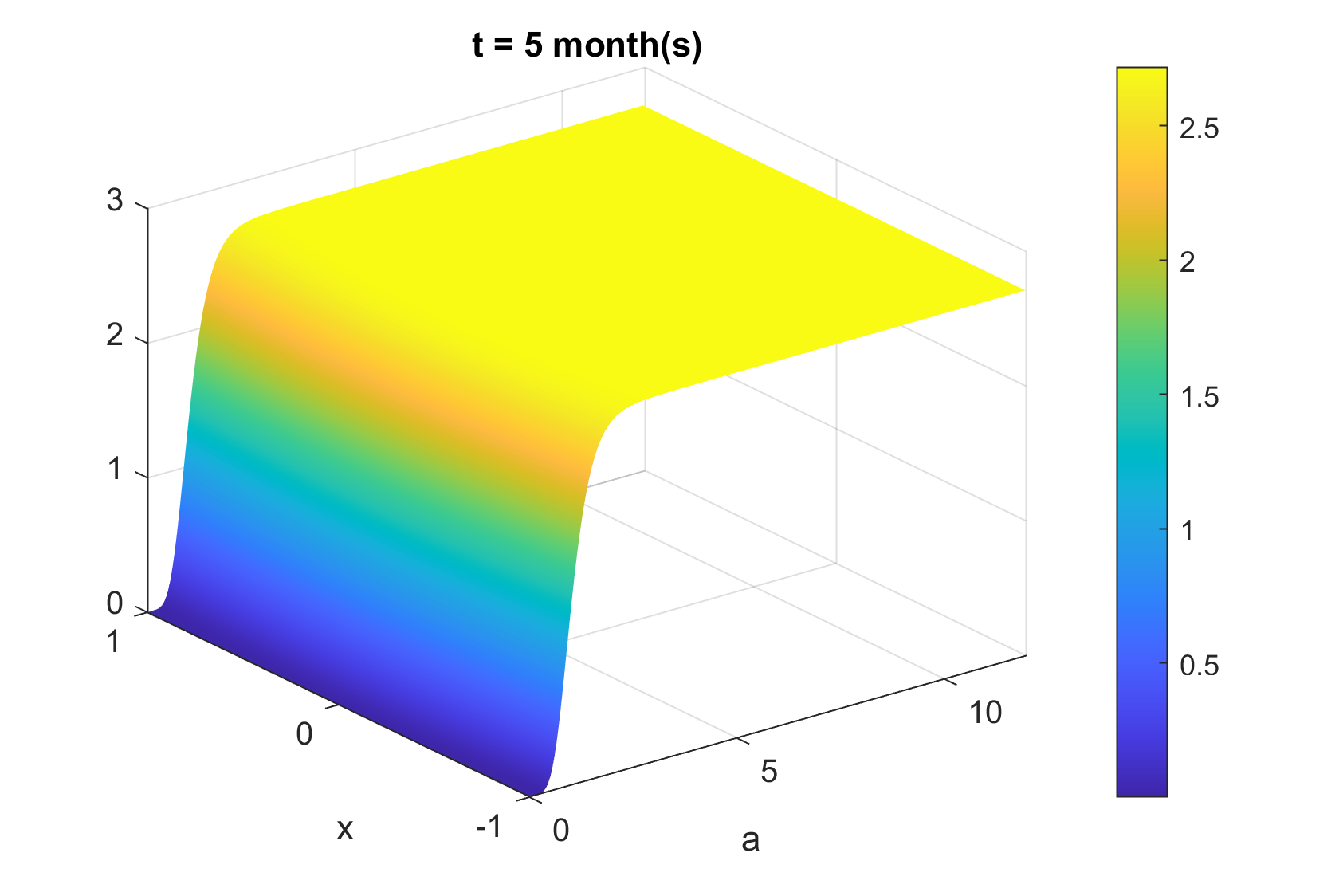}
\par\end{centering}
}\subfloat[$M=400$]{\begin{centering}
\includegraphics[scale=0.115]{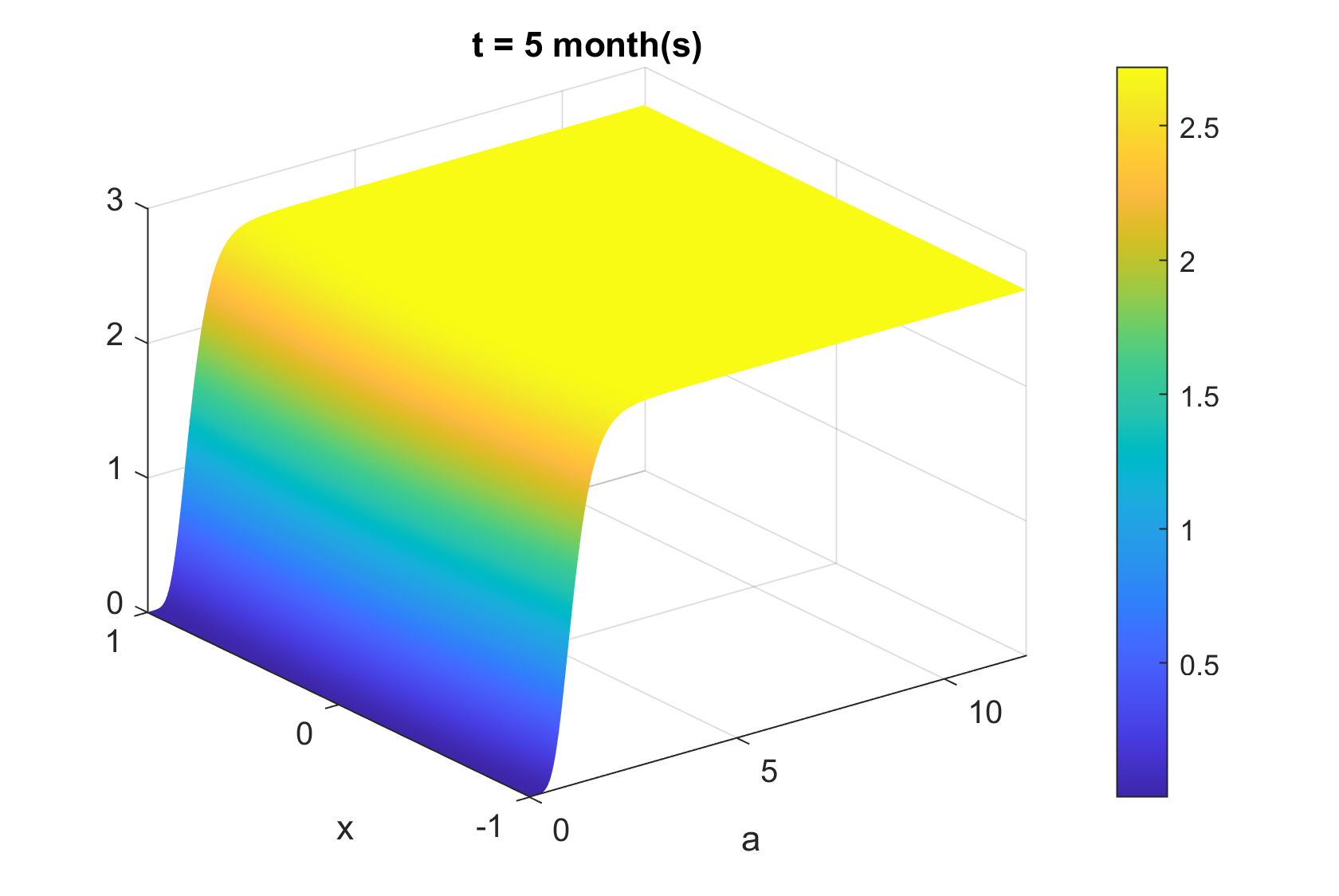}
\par\end{centering}
}\subfloat[$M=800$]{\begin{centering}
\includegraphics[scale=0.115]{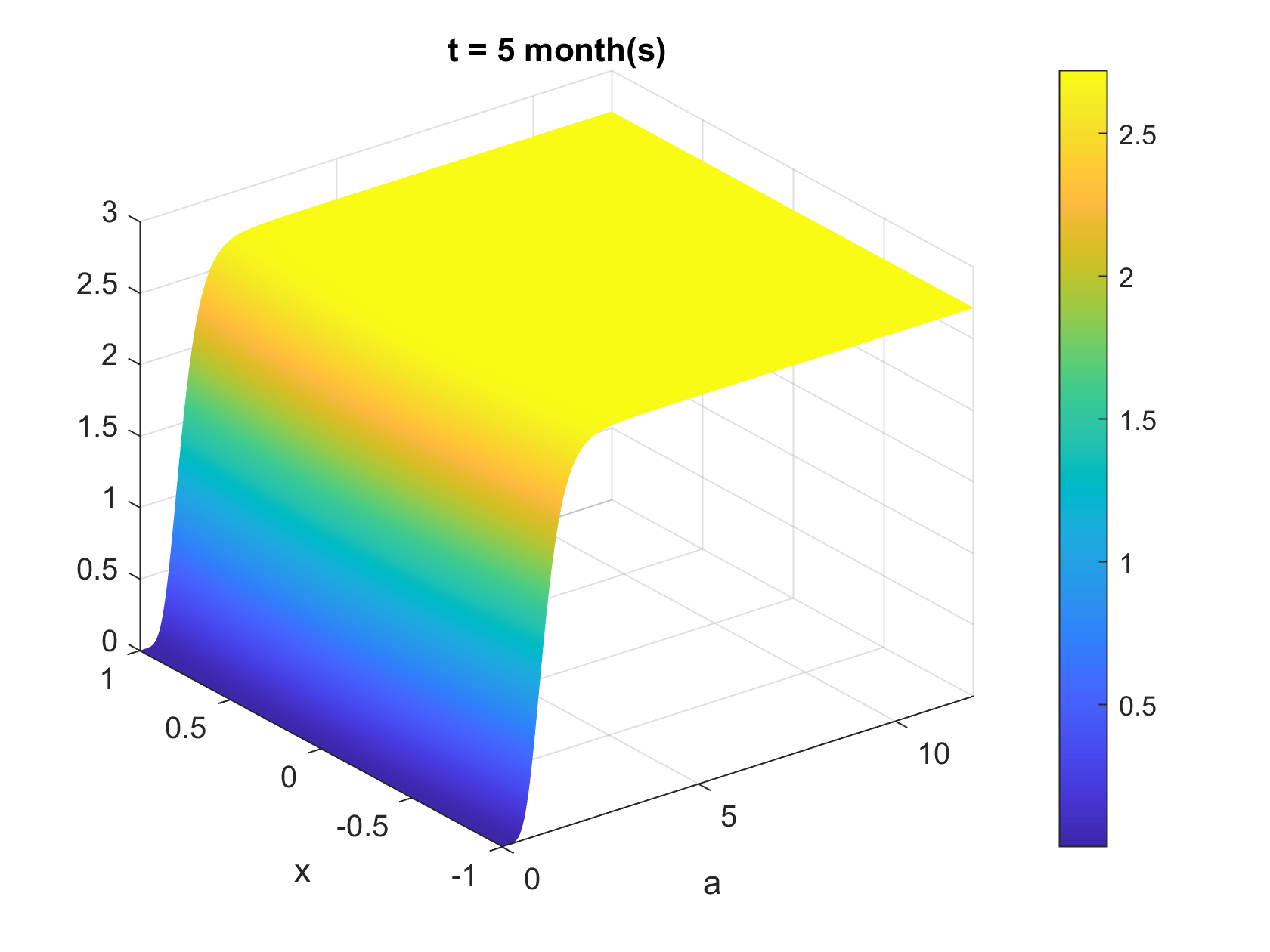}
\par\end{centering}
}
\par\end{centering}
\begin{centering}
\subfloat[$M=200$]{\begin{centering}
\includegraphics[scale=0.115]{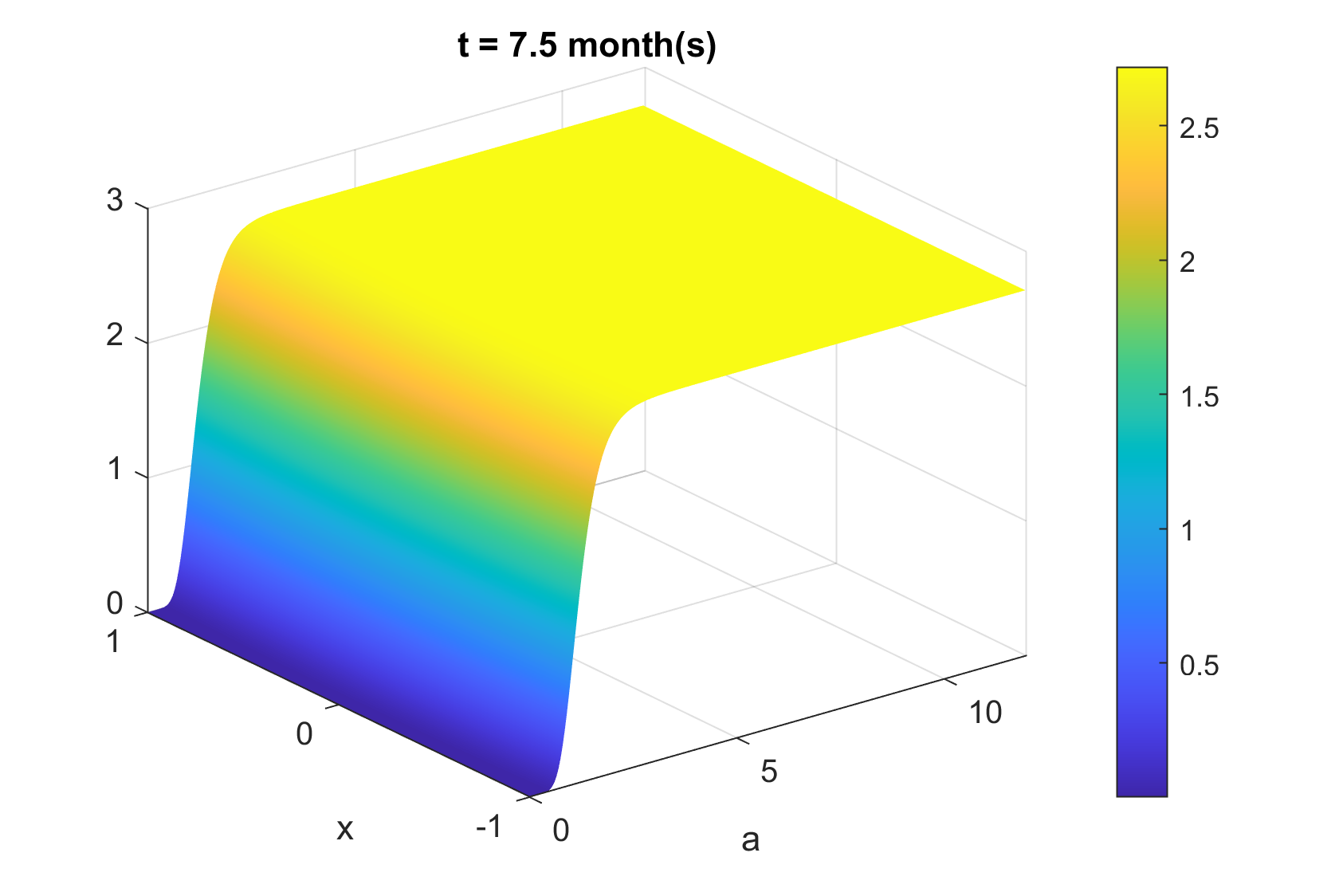}
\par\end{centering}
}\subfloat[$M=400$]{\begin{centering}
\includegraphics[scale=0.115]{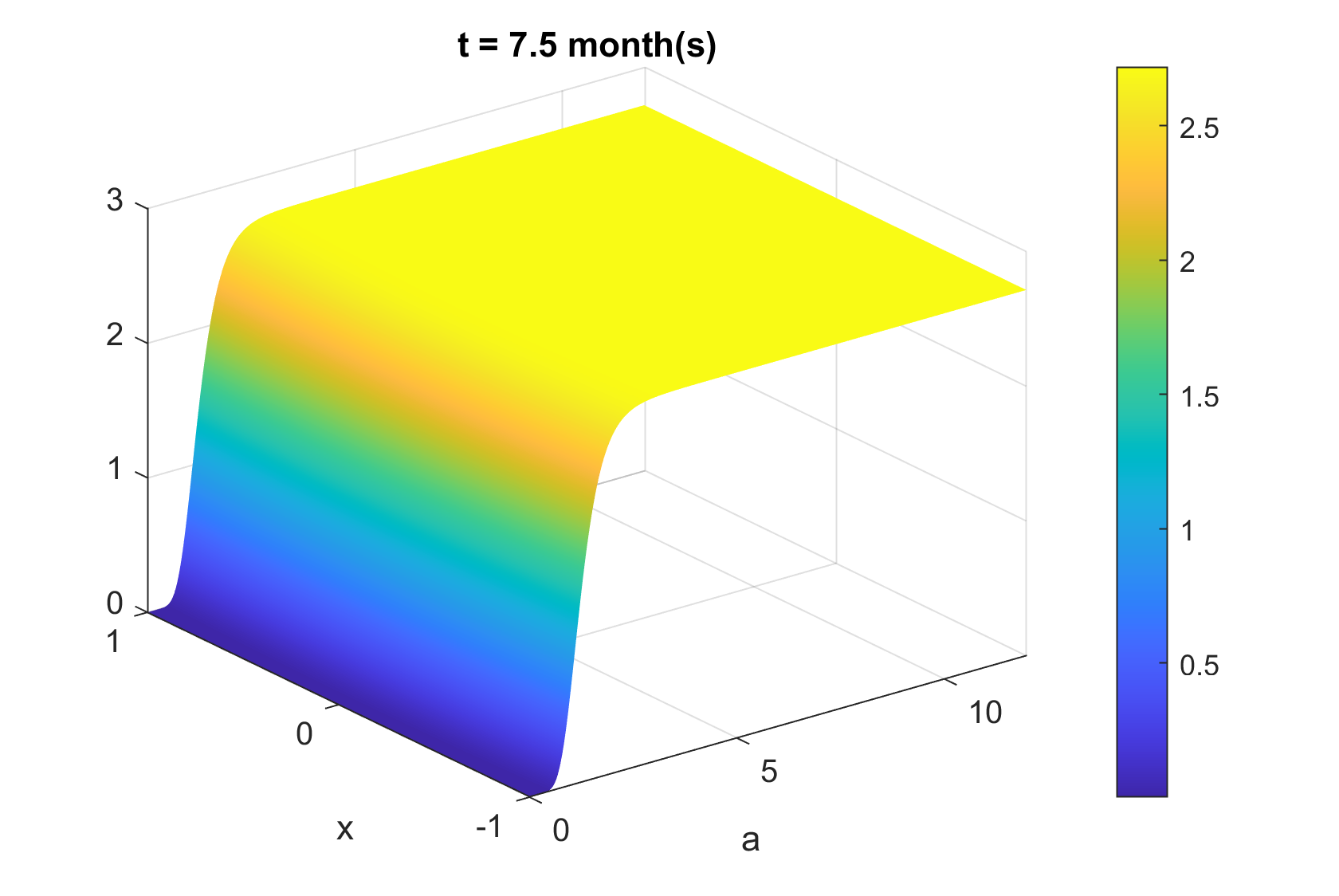}
\par\end{centering}
}\subfloat[$M=800$]{\begin{centering}
\includegraphics[scale=0.115]{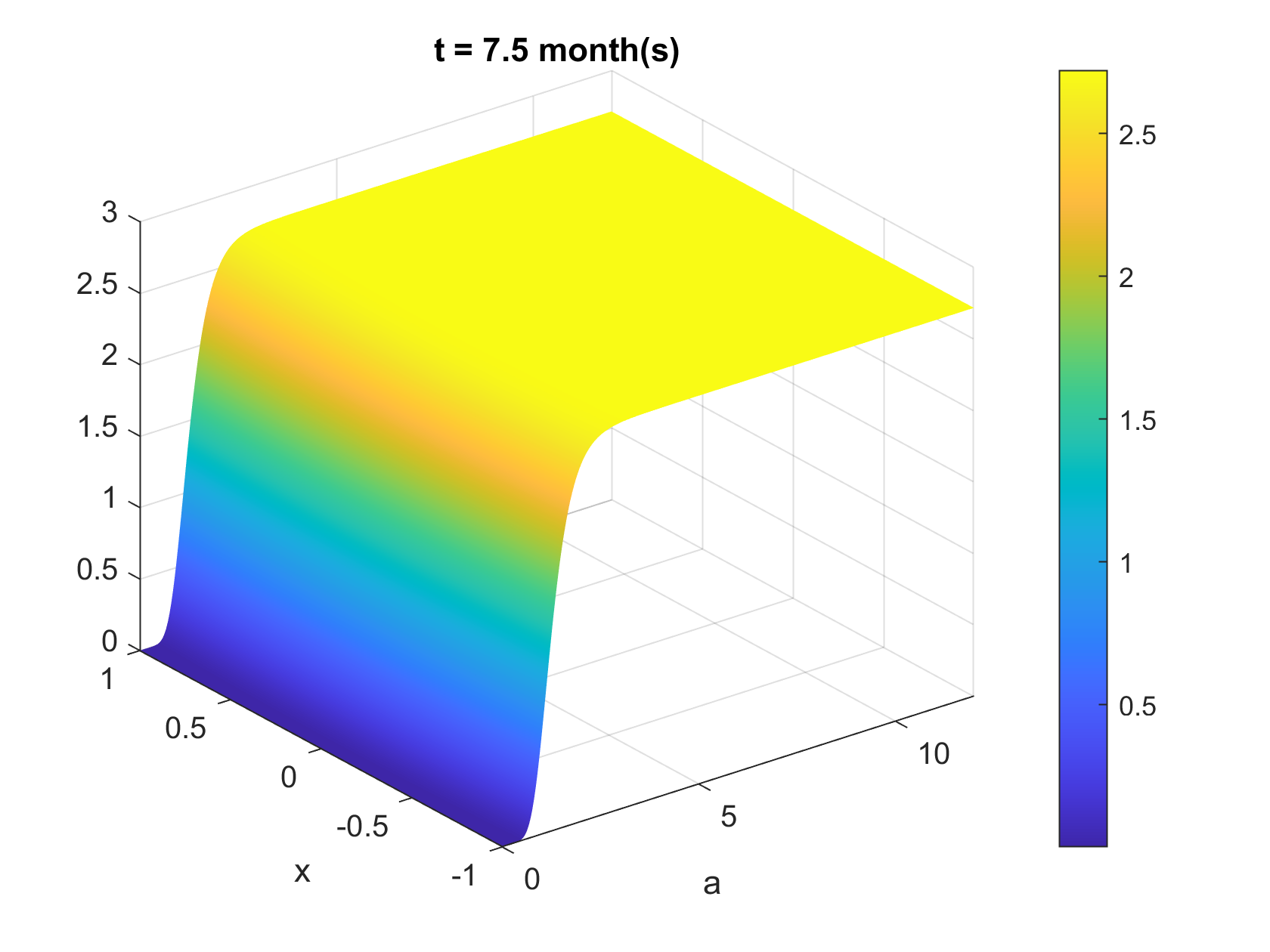}
\par\end{centering}
}
\par\end{centering}
\begin{centering}
\subfloat[$M=200$]{\begin{centering}
\includegraphics[scale=0.115]{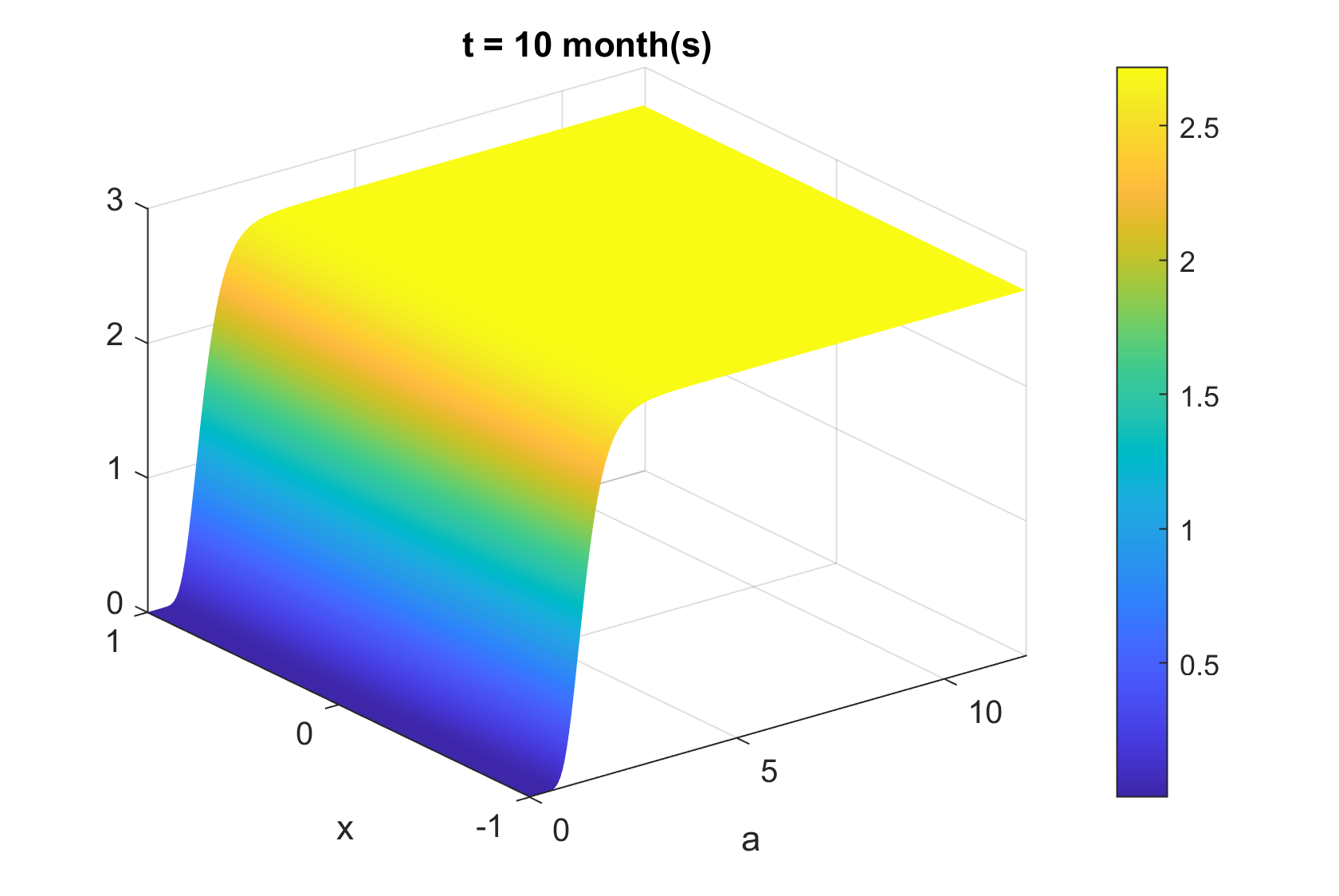}
\par\end{centering}
}\subfloat[$M=400$]{\begin{centering}
\includegraphics[scale=0.115]{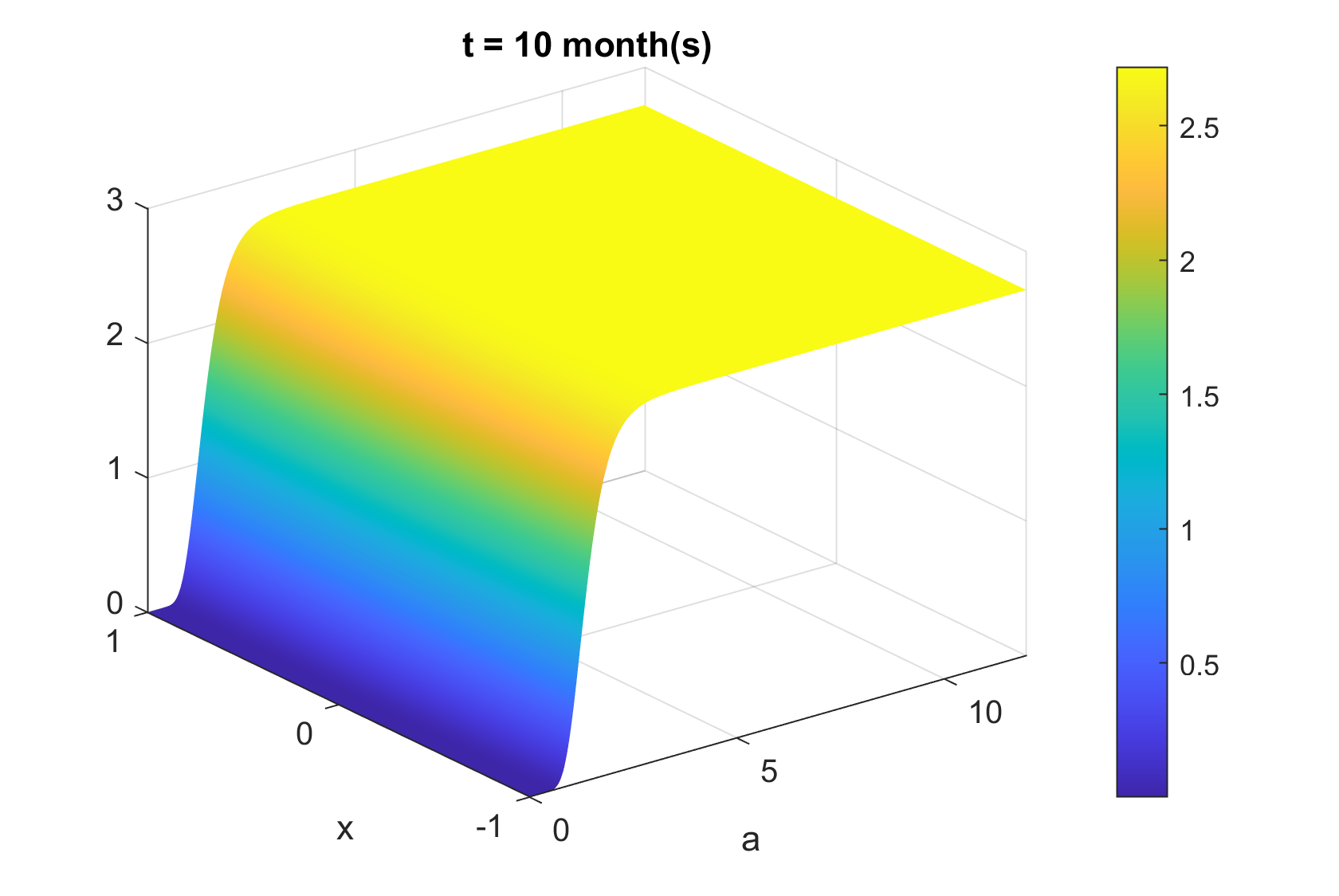}
\par\end{centering}
}\subfloat[$M=800$]{\begin{centering}
\includegraphics[scale=0.115]{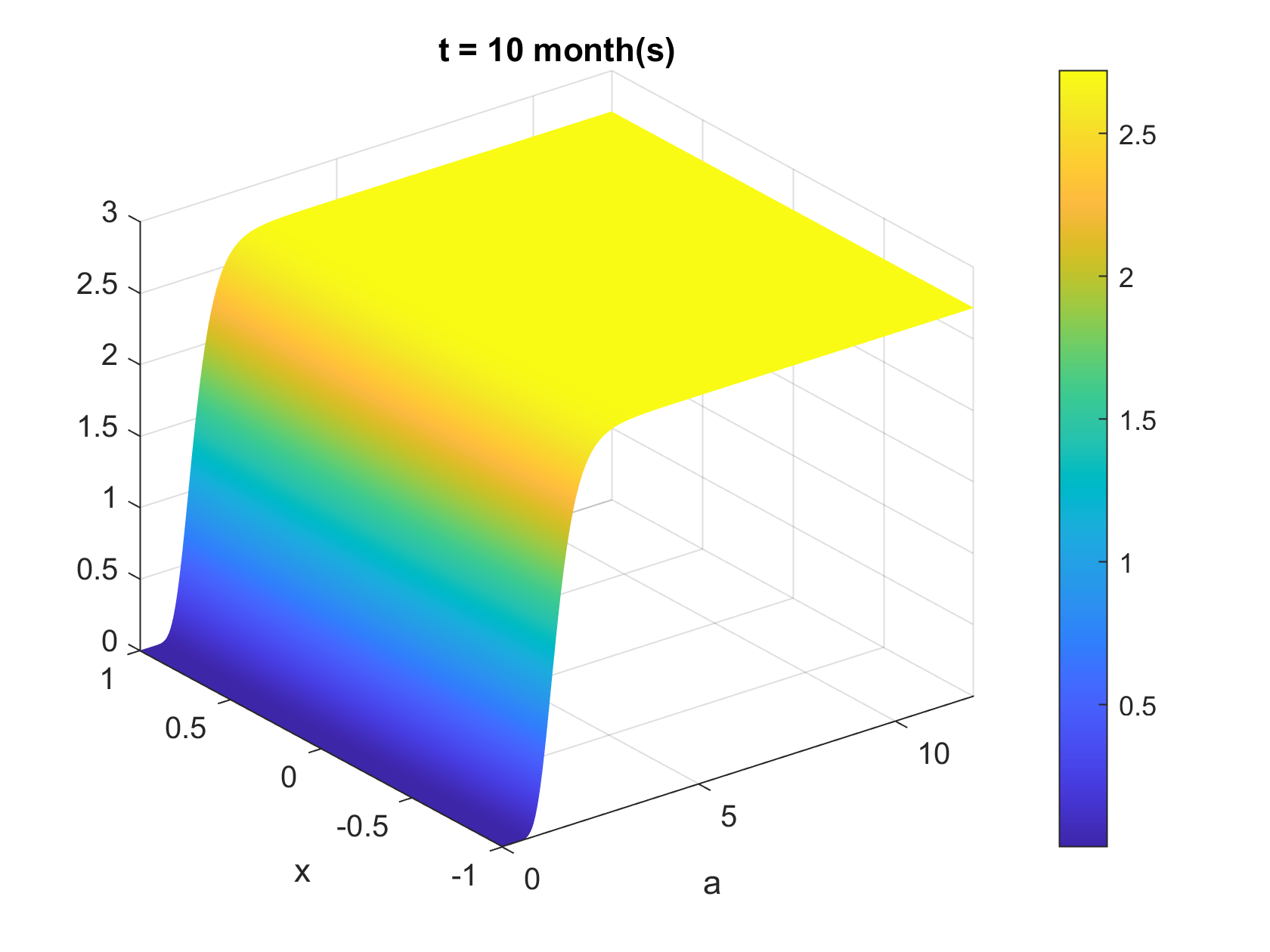}
\par\end{centering}
}
\par\end{centering}
\caption{Tumor cell density in Example 2 at $t=2.5,5.0,7.5,10.0$ for various
values of $M$. Column 1: $M=200,\Delta t=0.05$ (36 hours). Column
2: $M=400,\Delta t=0.025$ (18 hours). Column 3: $M=800,\Delta t=0.0125$
(9 hours).\label{fig:Ex2-2}}
\end{figure}
\par\end{center}

\subsection{Example 3: Hump-shaped initial profiles}

In this last example, we examine the explicit Fourier-Klibanov scheme
for our Gompertz system with the following initial conditions:
\[
u_{0}\left(a,x\right)=\frac{1}{\sqrt{2\pi}\varepsilon}\left[2-\sin\left(\frac{\pi}{4}\left(a-3\right)\right)\right]\exp\left(-\left(x-0.25\right)^{2}\right),
\]
\[
\overline{u}_{0}\left(t,x\right)=\frac{1}{\sqrt{2\pi}\varepsilon}\left[2-\sin\left(\frac{\pi}{4}\left(t-3\right)\right)\right]\exp\left(-\left(x-0.25\right)^{2}\right),
\]
where $\varepsilon=0.5$. Cf. Figure \ref{fig:-(2D)}, these functions
model well the hump-shaped profiles in the plane of $x$ and $a$.
It is also understood that there are two adjacent tumor distributions
presented in the sampled brain tissue. Using the proposed scheme,
we look for the dynamics of these tumor cell distributions with $\rho=0.36$
being the net proliferation rate and 
\[
D\left(t,a\right)=\exp\left(-\left(t-2T\right)^{2}-\left(a-2a_{\dagger}\right)^{2}\right).
\]
By the choice of $D\left(t,a\right)$ above, only the ``elders''
diffuse, but very slowly, at the final time observation.

We present our numerical findings for this example through: Figure
\ref{fig:Ex3-2} illustrates the tumor cell density, while Figure
\ref{fig:total3} depicts the total population. Based on our numerical
observations, we find that the approximation is very stable as $M$
increases. The curves representing the total population in Figure
\ref{fig:total3} almost coincide, indicating a very good accuracy
in the macroscopic sense. However, it is important to note that the
accuracy of the approximation decreases as we move further away from
the initial point, particularly towards the final time observation;
see again Figure \ref{fig:total3}.

The simulation provides some insights into the behavior of tumor cells,
as depicted in Figures \ref{fig:total3} and \ref{fig:Ex3-2}. Starting
with 3 thousand cells/cm, the total population gradually increases
and reaches its peak of approximately 44 thousand cells/cm in a span
of 10 months. The population somehow reaches a steady state between
the third and sixth months. Similar to Example 2, the distribution
of tumor cells has a modest peak, indicating that a too large $M$
is not required for numerical stability at all time points.

Furthermore, Figure \ref{fig:Ex3-2} shows the development and spread
of tumor cells from the original two distributions as both age and
time evolve. At certain time points, new tumor cells emerge and gradually
propagate to merge with the existing ``older'' distribution.
\begin{center}
\begin{figure}
\begin{centering}
\subfloat[$u_{0}$ (3D)]{\begin{centering}
\includegraphics[scale=0.2]{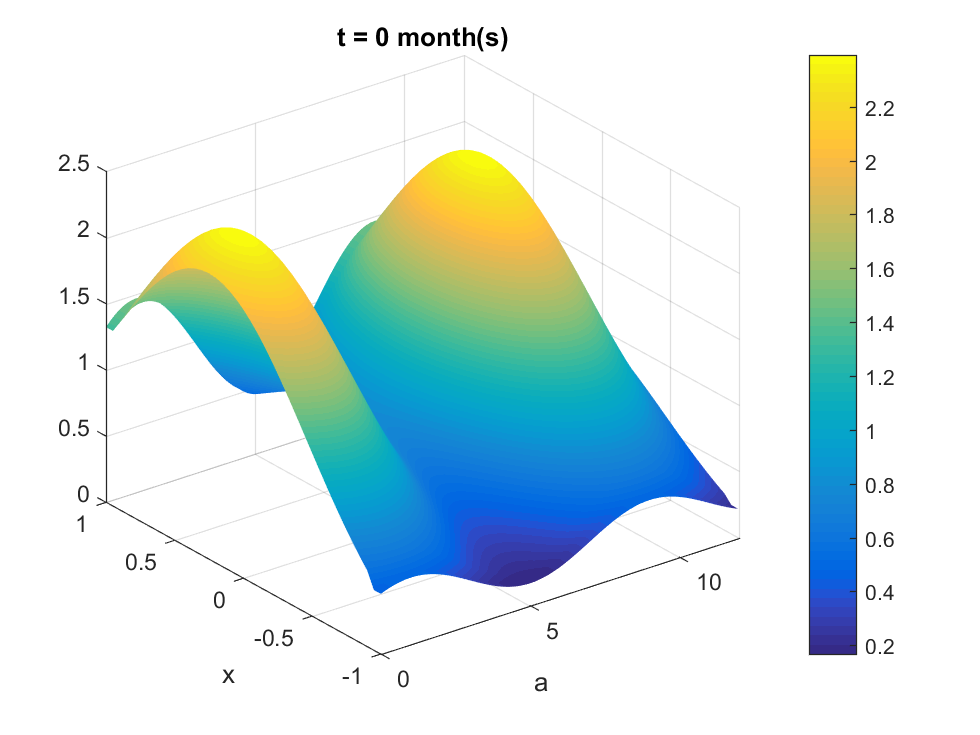}
\par\end{centering}
}\subfloat[$u_{0}$ (2D)\label{fig:-(2D)}]{\begin{centering}
\includegraphics[scale=0.2]{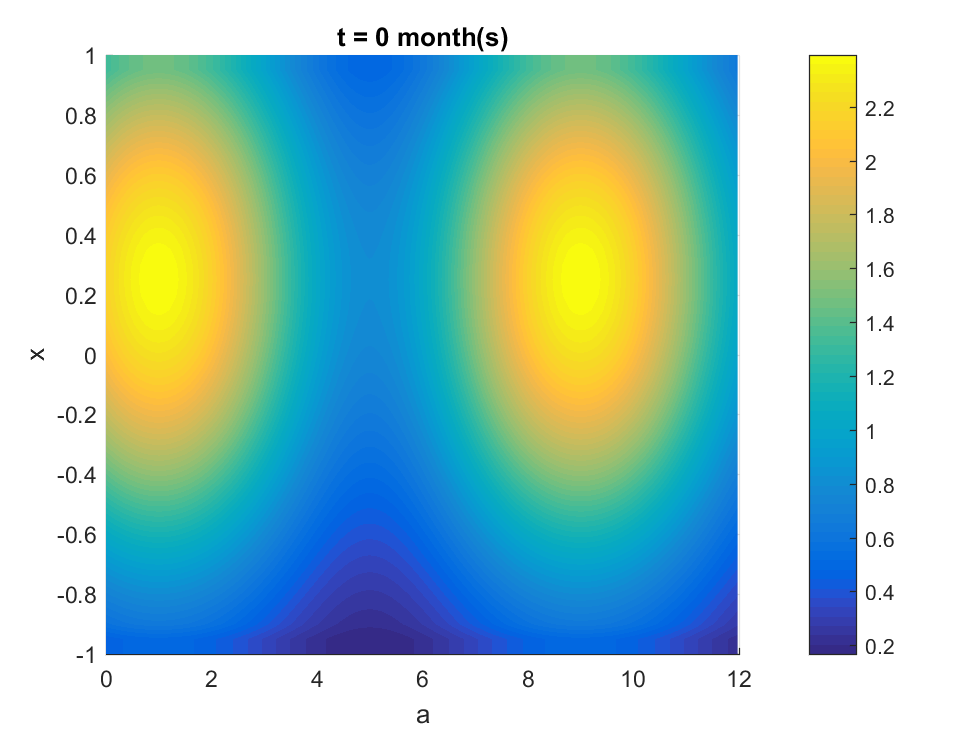}
\par\end{centering}
}\subfloat[$p\left(t\right)$\label{fig:total3}]{\begin{centering}
\includegraphics[scale=0.2]{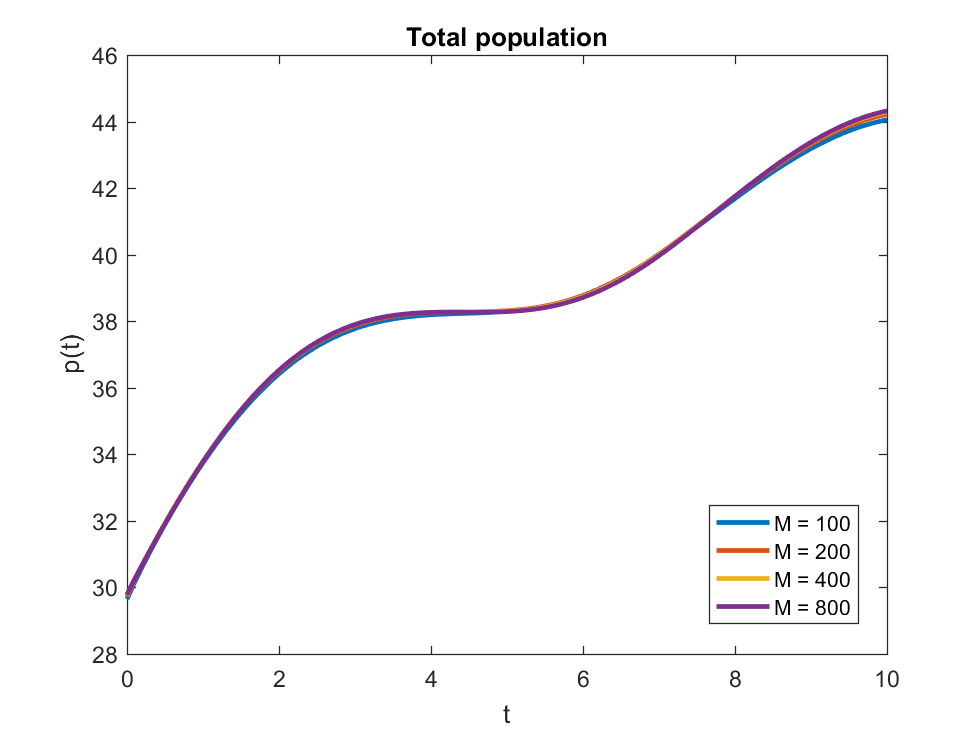}
\par\end{centering}
}
\par\end{centering}
\caption{Left: 3D representation of the initial data $u_{0}$ in Example 3.
Middle: 2D representation of the initial data $u_{0}$ in Example
3. Right: Total population of tumor cells $p\left(t\right)$ for varying
values of $M$.\label{fig:Ex3-1}}
\end{figure}
\par\end{center}

\begin{center}
\begin{figure}
\begin{centering}
\subfloat[$M=200$]{\begin{centering}
\includegraphics[scale=0.115]{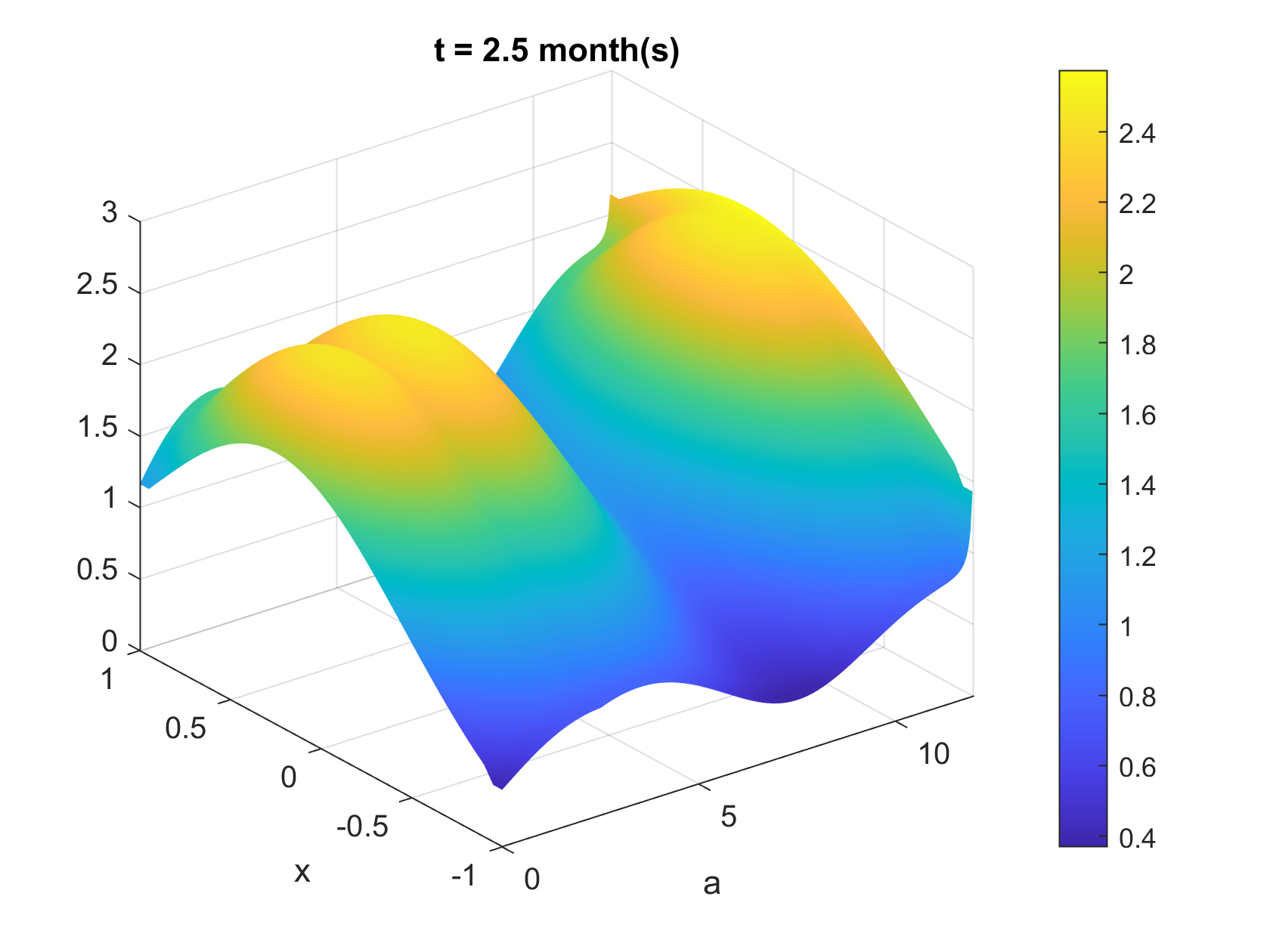}
\par\end{centering}
}\subfloat[$M=400$]{\begin{centering}
\includegraphics[scale=0.115]{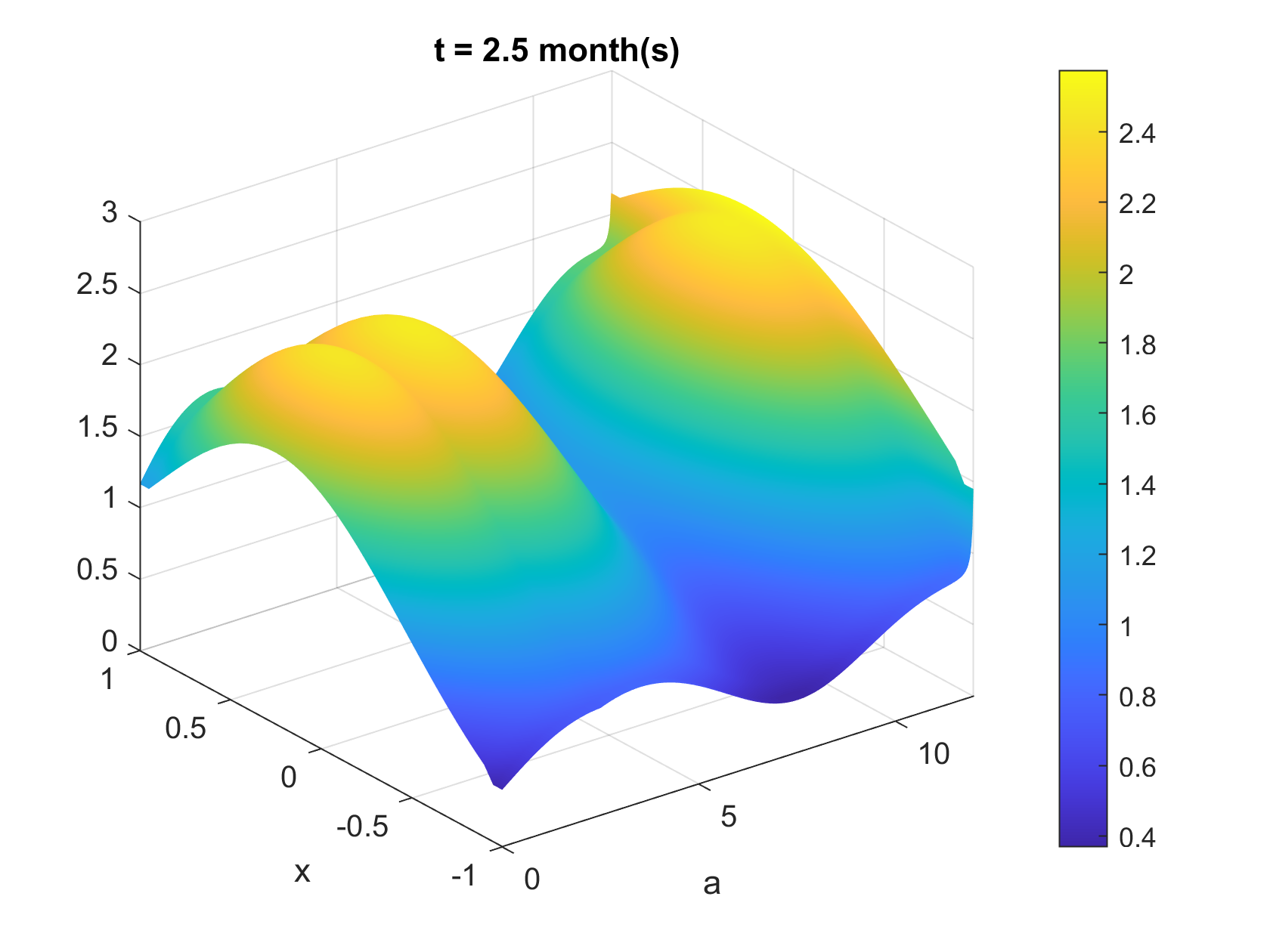}
\par\end{centering}
}\subfloat[$M=800$]{\begin{centering}
\includegraphics[scale=0.115]{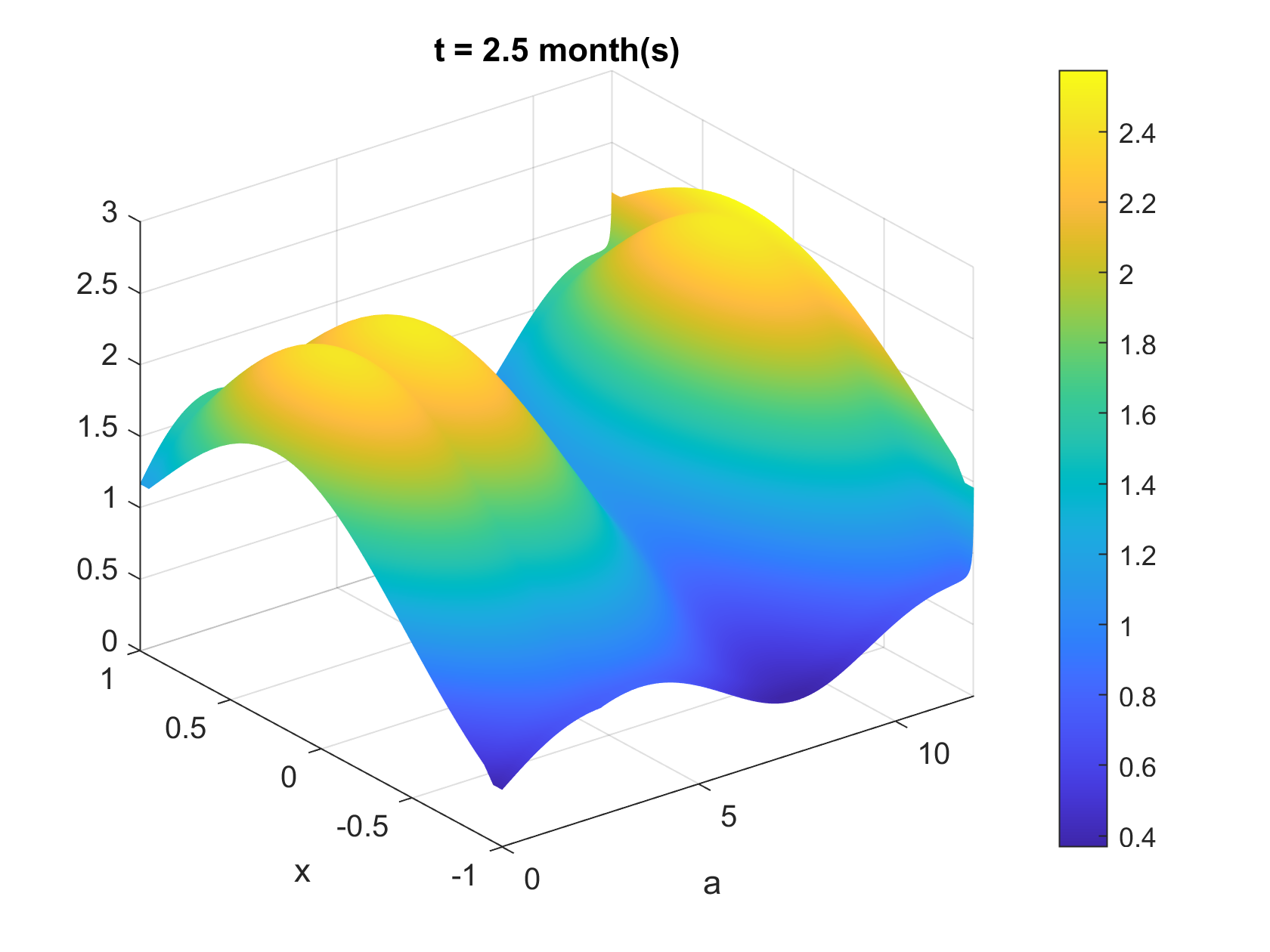}
\par\end{centering}
}
\par\end{centering}
\begin{centering}
\subfloat[$M=200$]{\begin{centering}
\includegraphics[scale=0.115]{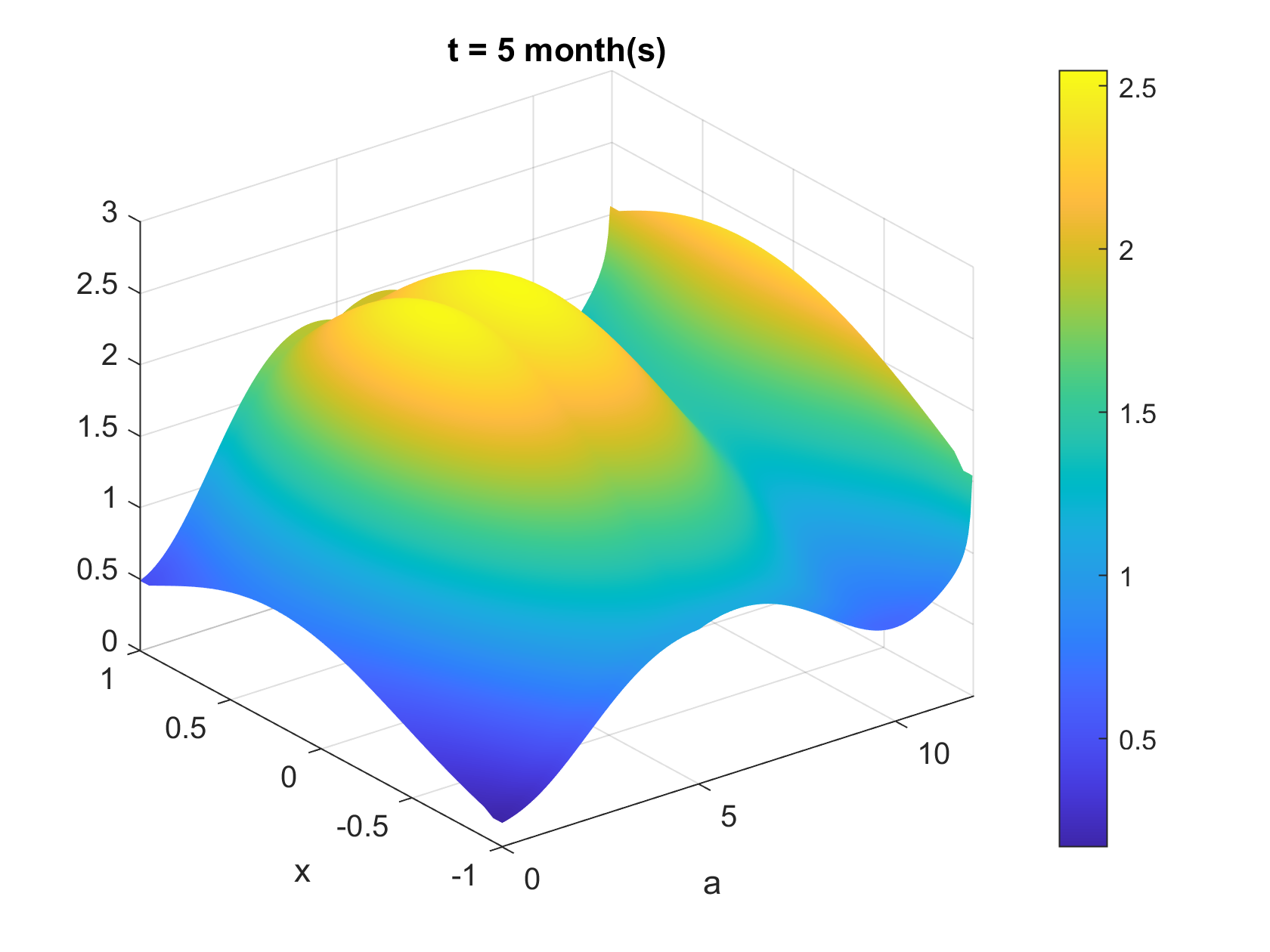}
\par\end{centering}
}\subfloat[$M=400$]{\begin{centering}
\includegraphics[scale=0.115]{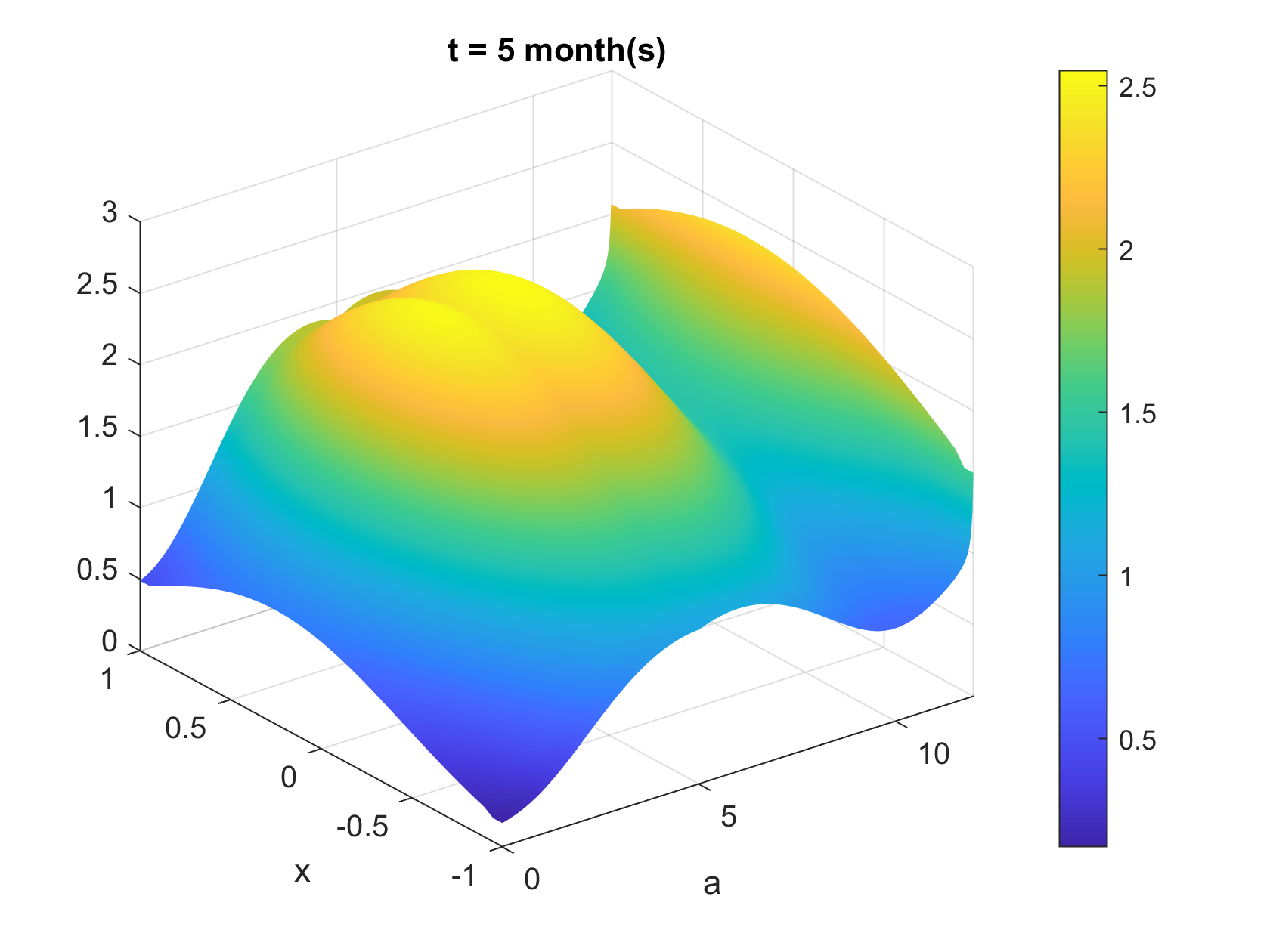}
\par\end{centering}
}\subfloat[$M=800$]{\begin{centering}
\includegraphics[scale=0.115]{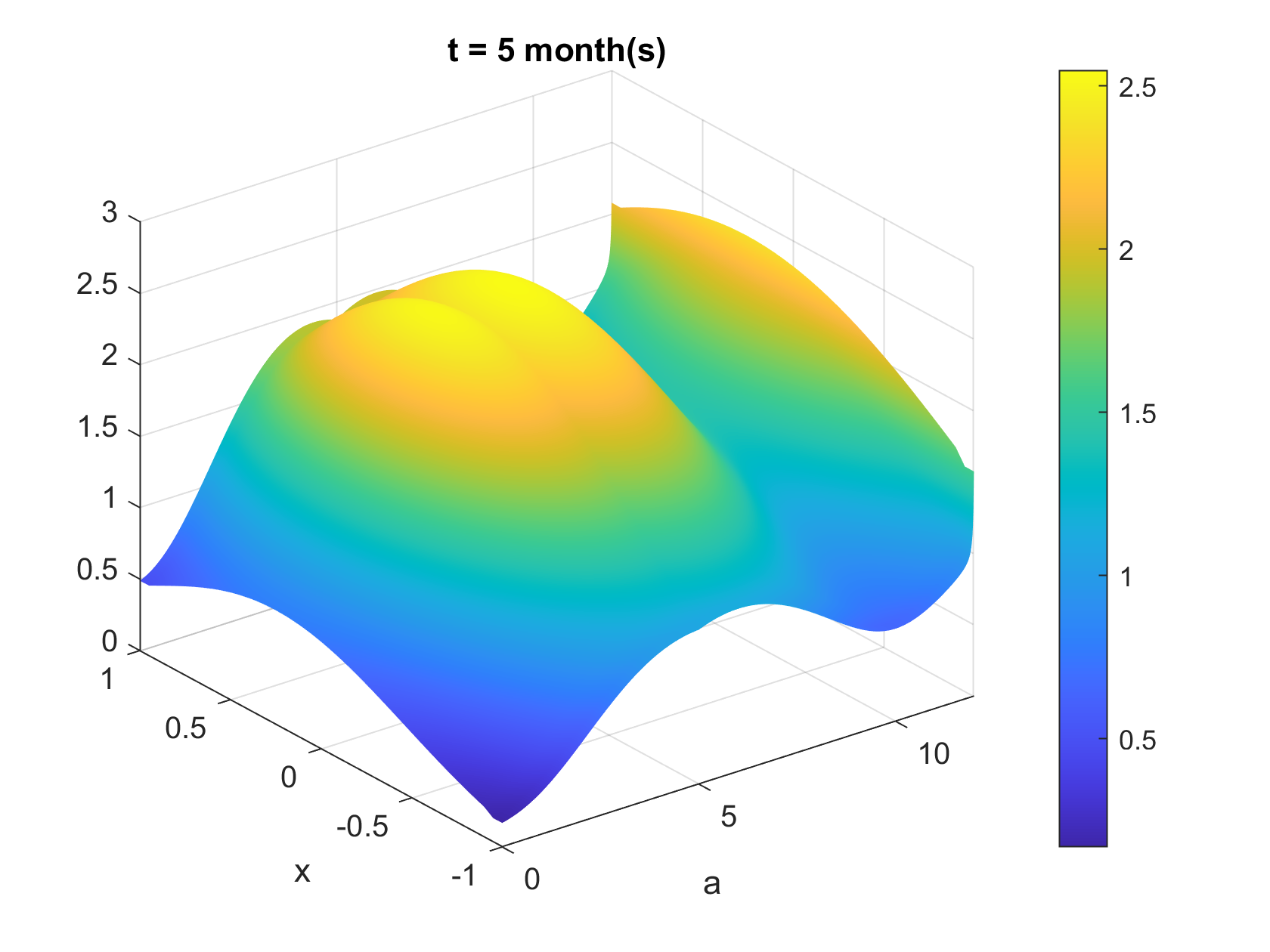}
\par\end{centering}
}
\par\end{centering}
\begin{centering}
\subfloat[$M=200$]{\begin{centering}
\includegraphics[scale=0.115]{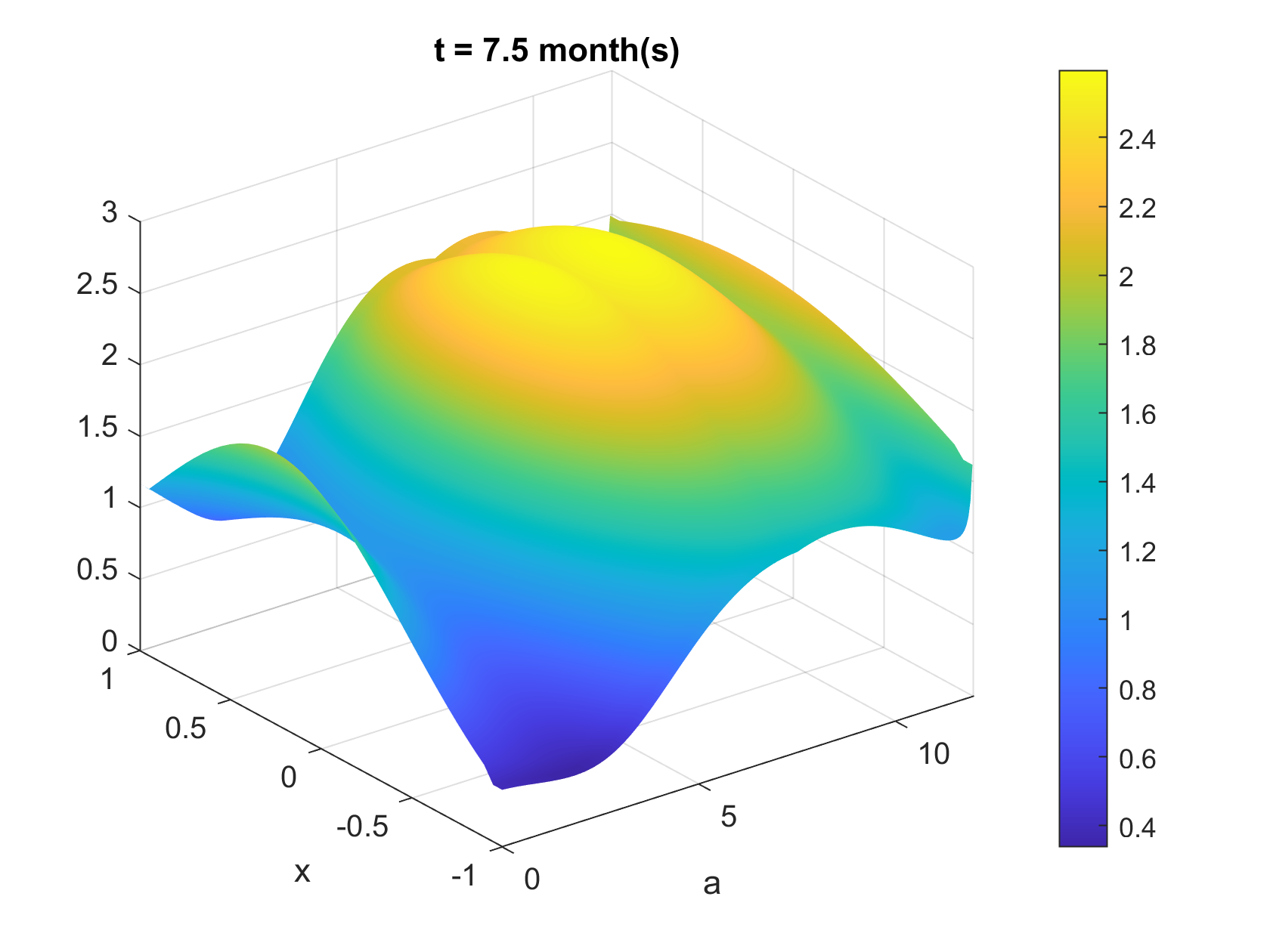}
\par\end{centering}
}\subfloat[$M=400$]{\begin{centering}
\includegraphics[scale=0.115]{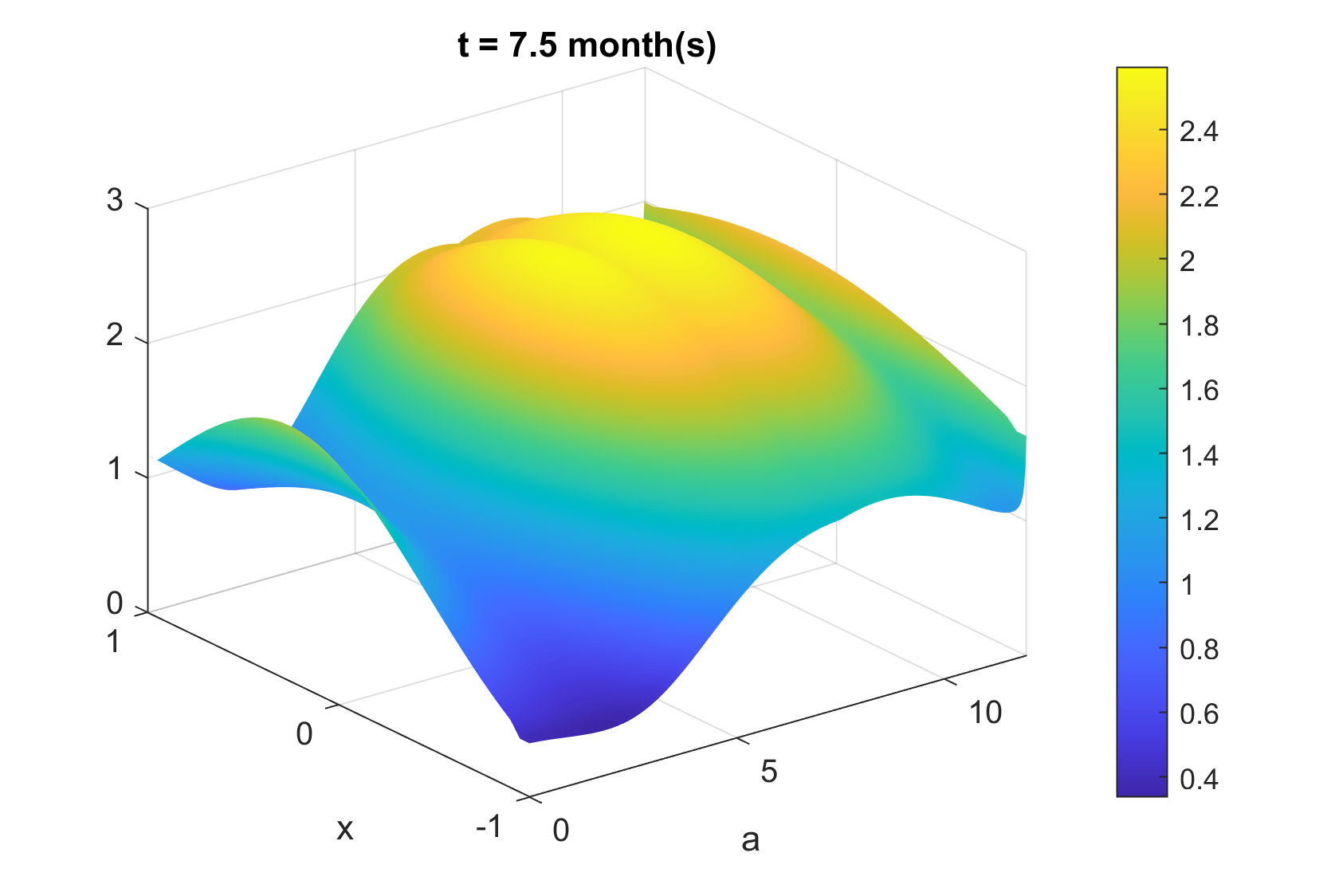}
\par\end{centering}
}\subfloat[$M=800$]{\begin{centering}
\includegraphics[scale=0.115]{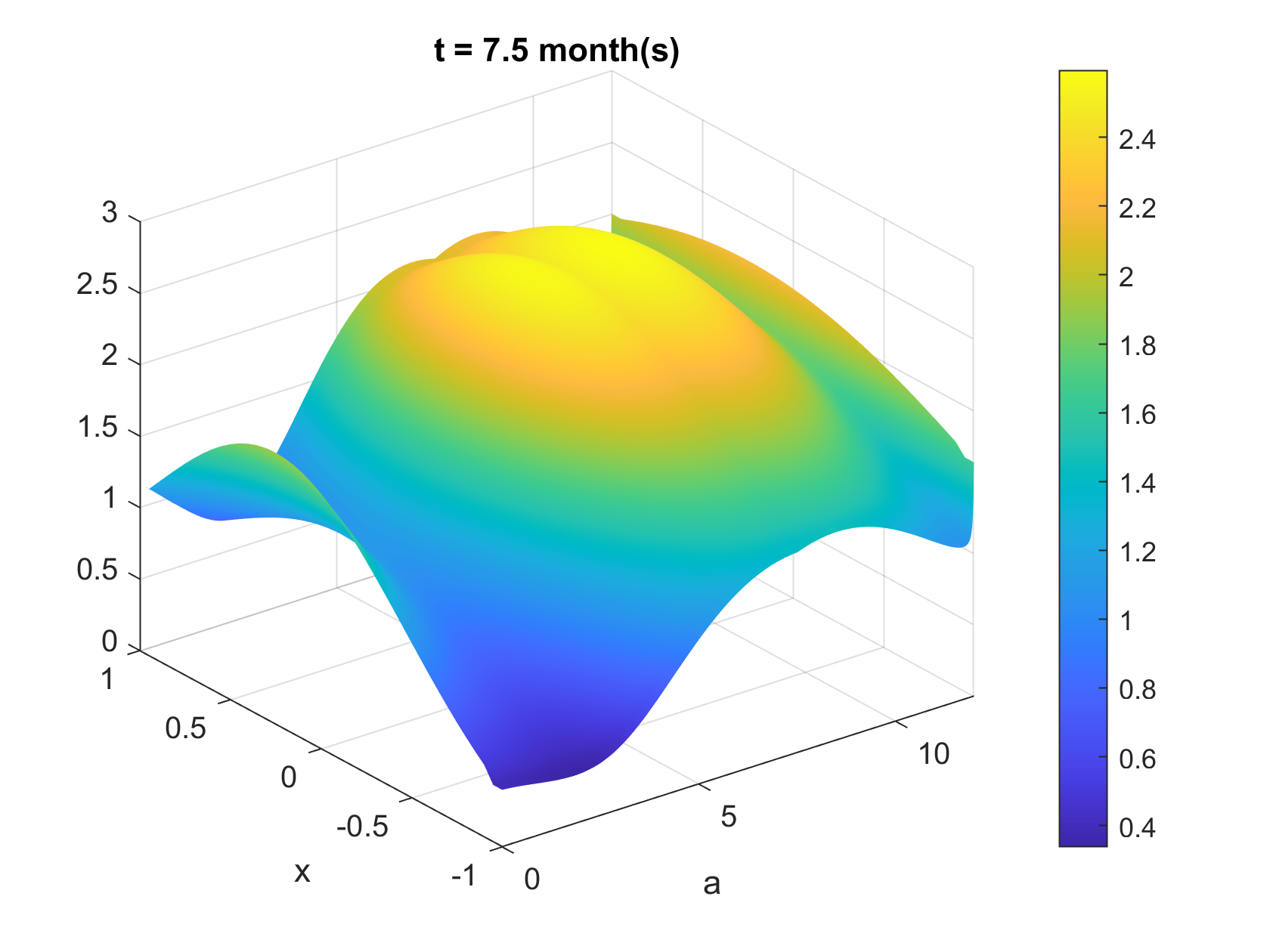}
\par\end{centering}
}
\par\end{centering}
\begin{centering}
\subfloat[$M=200$]{\begin{centering}
\includegraphics[scale=0.115]{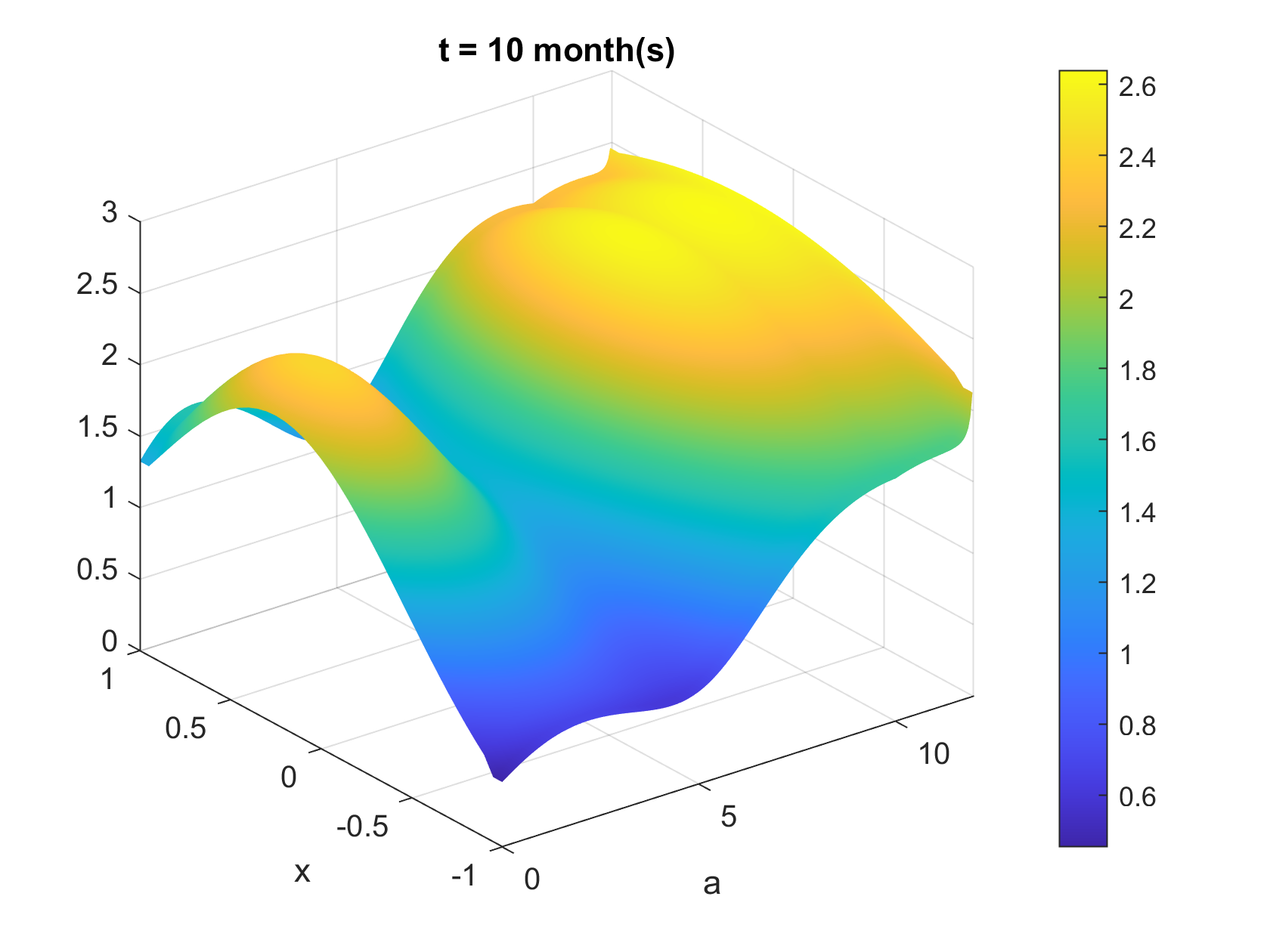}
\par\end{centering}
}\subfloat[$M=400$]{\begin{centering}
\includegraphics[scale=0.115]{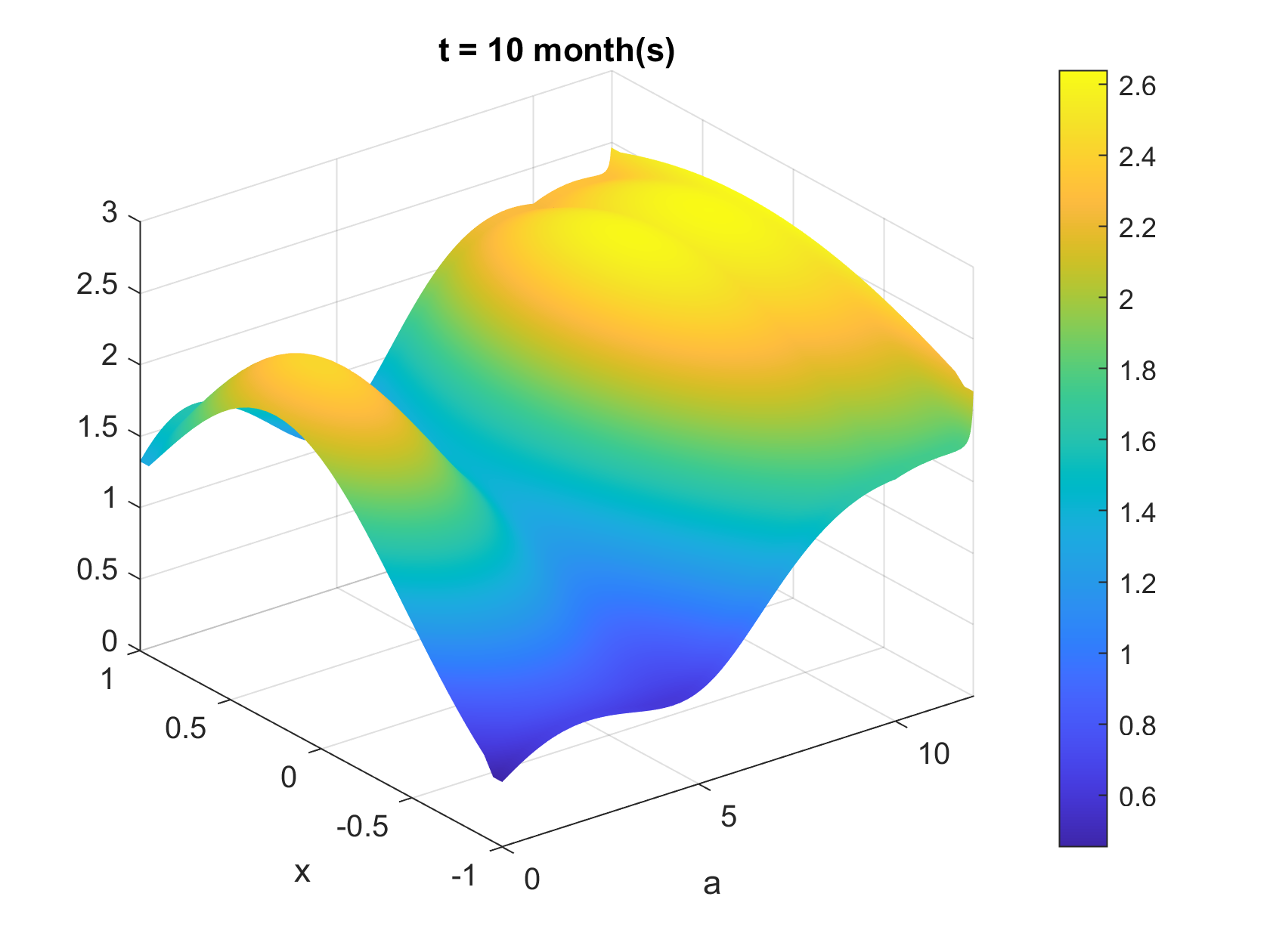}
\par\end{centering}
}\subfloat[$M=800$]{\begin{centering}
\includegraphics[scale=0.115]{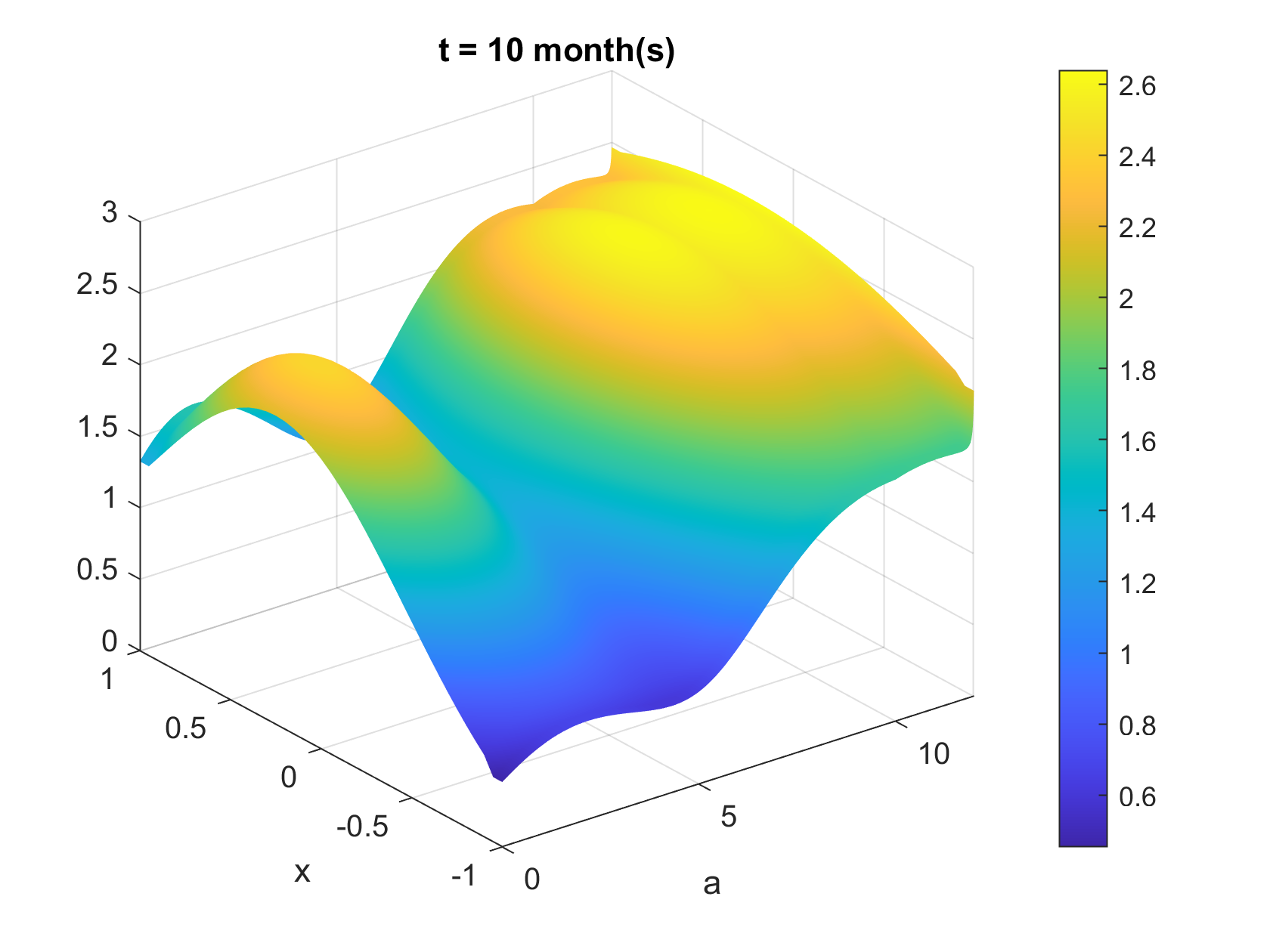}
\par\end{centering}
}
\par\end{centering}
\caption{Tumor cell density in Example 3 at $t=2.5,5.0,7.5,10.0$ for various
values of $M$. Column 1: $M=200,\Delta t=0.05$ (36 hours). Column
2: $M=400,\Delta t=0.025$ (18 hours). Column 3: $M=800,\Delta t=0.0125$
(9 hours).\label{fig:Ex3-2}}
\end{figure}
\par\end{center}

\section{Conclusions\label{sec:45}}

In this work, we have presented a new numerical approach to solve
the age-structured population diffusion problem of Gompertz type.
Our approach relies on a combination of the recently developed Fourier-Klibanov
method and the explicit finite difference method of characteristics.
The notion is that exploiting suitable transformations, the Gompertz
model of interest turns to a third-order nonlinear PDE. Then, the
Fourier-Klibanov method is applied to derive a coupled transport-like
PDE system. This system is explicitly approximated by the finite difference
operators of time and age.

In this work, we have focused on the numerics rather than the theory.
It remains to show the rate of convergence of the explicit scheme
under particular smoothness conditions of the involved parameters
and the true solution. Besides, it is still open in this age-dependent
Gompertz mode that if we have the non-negativity of the initial data,
the whole solution will follow.

As readily expected, the explicit finite difference method is conditionally
stable. Thus, the implicit scheme should be investigated in the upcoming
research. We also want to extend the applicability of the method to
nonlinear heterogeneous problems in multiple dimensions.

\section*{Acknowledgments}

This research is funded by University of Science, VNU-HCM under grant
number T2022-47. N. T. Y. N. would love to thank Prof. Dr. Nam Mai-Duy
and Prof. Dr. Thanh Tran-Cong from University of Southern Queensland
(Australia) for their support of her PhD period. V. A. K. would like
to thank Drs. Lorena Bociu, Ryan Murray, Tien-Khai Nguyen from North
Carolina State University (USA) for their support of his early research
career.

\bibliographystyle{plain}
\bibliography{mybib}

\end{document}